\documentclass[12pt,a4paper]{amsart}
\usepackage{amsmath,amsfonts,amssymb,amscd,amsthm}
\usepackage[shortlabels]{enumitem}

\usepackage{color, soul}
\definecolor{gr}{rgb}{0.7, 1, 0.7}
\definecolor{rr}{rgb}{1, 0.7, 0.7}

\usepackage{latexsym}
\usepackage{graphicx}

\theoremstyle{plain} 
\newtheorem{theorem}{Theorem}[section]
\newtheorem{lemma}[theorem]{Lemma}

\newtheorem{proposition}[theorem]{Proposition}

\theoremstyle{definition} 
\newtheorem{definition}[theorem]{Definition}
\theoremstyle{remark} 
\newtheorem{remark}[theorem]{Remark}

\renewcommand{\mathfrak}{\mathbf}

\renewcommand{\Re}{\,\mathrm{Re}\,}

\newcommand{\ignore}[1]{}

\newcommand{\bbC}{\mathbb{C}}
\newcommand{\bbH}{\mathbb{H}}

\newcommand{\bbN}{\mathbb{N}}
\newcommand{\bbR}{\mathbb{R}}

\newcommand{\bbD}{\mathbb{D}}

\newcommand{\tl}{\tilde}
\newcommand{\NN}{\mathbb N}

\newcommand{\cR}{\mathcal R}

\newcommand{\diam}{\operatorname{diam}}

\renewcommand{\mod}{\operatorname{mod}}

\title[Complex a priori bounds for Lorenz maps]{Complex a priori bounds for Lorenz maps}
\author{Denis Gaidashev}
\address{Uppsala University, Uppsala, Sweden}
\email{gaidash@math.uu.se}

\author{Igors Gorbovickis}
\address{Jacobs University, Bremen, Germany}
\email{i.gorbovickis@jacobs-university.de}

\subjclass[2010]{}
\keywords{}
\date{\today}

\begin{document}
\begin{abstract}
We construct complex a-priori bounds for certain infinitely renormalizable Lorenz maps. As a corollary, we show that renormalization is a real-analytic operator on the corresponding space of Lorenz maps.
\end{abstract}
\maketitle

\section{Introduction}

Renormalization has been one of the central topics in modern one-dimensional real and complex dynamics. It was introduced in the works of Feigenbaum~\cite{Feigenbaum_78}, \cite{Feigenbaum_79} and Coullet and Tresser~\cite{Coullet_Tresser_78} in order to explain certain universality phenomena for families of unimodal maps and critical circle maps. Seminal works of Sullivan~\cite{Sull-qc}, \cite{deMelo_vanStrien} and Douady and Hubbard~\cite{DH85} put renormalization theory into the context of holomorphic dynamics. Renormalization theory of analytic unimodal maps of an interval was essentially completed in the works of McMullen~\cite{McM-ren2}, Lyubich~\cite{Lyubich-dynquad}, \cite{Lyubich-Hairiness}, \cite{Lyu-ae}, Graczyk and \'{S}wiatek~\cite{Graczyk_Swiatek_97}, and Levin and van Strien~\cite{Levin_vStr_98} while an analogous theory for analytic critical circle maps was developed in the series of works by de Faria~\cite{DeFar}, de Faria and de Melo~\cite{FM2}, and Yampolsky~\cite{Yamp-towers}, \cite{Ya3}, \cite{Ya4}.

The analyticity assumption of the maps plays an important role in the above mentioned works. 
Some further developments in~\cite{DeFaria_DeMelo_Pinto}, \cite{Guarino_deMelo}, \cite{Guarino_Martens_deMelo}, \cite{IG_Ya_unimodal}, \cite{IG_Ya} extend the above mentioned renormalization theories to certain classes of smooth maps, however, the proofs of these results still rely heavily on analytic methods. 

Motivated by a successful application of analytic methods in the renormalization theories of unimodal and critical circle maps, in this paper we introduce a complex analytic approach to the study of renormalization of Lorenz maps. 
Lorenz maps can be defined as orientation preserving maps of an interval with a single point of discontinuity. Such maps appear as factorized first return maps of a geometric Lorenz flow. In this paper we will study Lorenz maps that are real analytic outside of the point of discontinuity $c$ and have vanishing one-sided derivatives at $c$.

Renormalization of Lorenz maps was first considered by Martens and de Melo in~\cite{Martens_deMelo_2001}, where combinatorics of renormalizations was studied. Existence of a fixed point of renormalization was first proven by Winckler in~\cite{Winckler_2010} using computer assisted methods. In subsequent papers Winckler and Gaidashev~\cite{Gaidashev_Winckler_2012}, Martens and Winckler~\cite{Martens_Winckler_2014} and Gaidashev~\cite{Gaidashev_2012} constructed fixed points of Lorenz renormalization for wider classes of combinatorics.

Unlike in the case of analytic unimodal or critical circle maps, very little is currently known about convergence of renormalizations of Lorenz maps. On the contrary, Winckler and Martens have constructed examples of infinitely renormalizable Lorenz maps, whose renormalizations diverge~\cite{Martens_Winckler_2014b}. More specifically, they showed that these maps do not admit \textit{real a priori bounds}, that is, the sequence of renormalizations of any such map eventually leaves any $C^1$-compact subset in the space of all Lorenz maps.

In this paper we focus on the opposite case, when Lorenz maps admit real a priori bounds. Several classes of such maps were constructed in~\cite{Martens_Winckler_2014},~\cite{Gaidashev_2012}. Historically, one of the key steps in renormalization theory of analytic unimodal and critical circle maps was promotion of real a priori bounds to the so-called \textit{complex bounds}. The latter in particular imply that sufficiently high renormalizations have analytic extensions to some definite complex neighborhoods. The purpose of this paper is to prove complex bounds for analytic Lorenz maps of an arbitrary bounded (not necessarily monotone) combinatorial type admitting real a priori bounds (c.f., Theorem~\ref{main_theorem}). Our proof relies on the methods of~\cite{Ya1} and a careful analysis of the combinatorial properties of renormalizable Lorenz maps. As a corollary, we prove that a sufficiently high iterate of the renormalization is a real-symmetric analytic operator on an appropriate real-symmetric complex Banach manifold whose real slice consists of analytic Lorenz maps (c.f., Theorem~\ref{main_theorem2}). Analogous analytic operators for renormalization of unimodal and critical circle maps were recently used in~\cite{IG_Ya} and~\cite{IG_Ya_unimodal} to establish new results on convergence of renormalizations.

The structure of the paper is as follows: in Section~\ref{prelim_sec} we give all necessary definitions and state our main results. In Section~\ref{Complex_neib_sec} we collect the main complex analytic tools that are used in the proof of complex bounds, while in Section~\ref{dyn_intervals_sec} we study combinatorial properties of renormalizable Lorenz maps. Finally, Section~\ref{Main_proofs_sec} contains the proofs of our main results.

\section{Preliminaries and main results}\label{prelim_sec}

\subsection{Lorenz maps and their renormalization}

\begin{definition}
Let $\alpha>1$ be a real number and let $I\subset\bbR$ be a closed interval. A map $f\colon I\to I$ is an \textit{analytic unimodal map} with critical exponent $\alpha$, if there exists a point $c=c_f$ in the interior of $I$, such that 
\begin{enumerate}[(i)]
 \item $f$ is analytic on $I\setminus\{c\}$, and $f'(x)\neq 0$, for all $x\in I\setminus\{c\}$;
 \item in a neighborhood of the point $c$, the function $f$ can be reperesented as 
\begin{equation}\label{f_equals_psi_alpha_phi_equation}
f(x)=\psi(|\phi(x)|^\alpha),
\end{equation}
where $\phi$ is an affine map, $\phi(c)=0$, and $\psi$ is a conformal maps in some neighborhoods of $0$ respectively.
\end{enumerate}
\end{definition}

\begin{remark}
	One can consider a larger class of unimodal maps $f$ by letting the ``inner'' map $\phi$ be conformal in a neighborhood of $c$, however, it will be easy to check that subsequent renormalizations decrease the nonlinearity of the ``inner'' map, thus, bringing it arbitrarily close to the space of affine maps (see~\cite{IG_Ya_unimodal} for more details).
\end{remark}

\begin{definition}
 Let $I=[a,b]\subset\bbR$ be a closed interval and let $c\in I$ be an interior point of $I$. 
 A map 
 $f\colon I\setminus\{c\}\to I$ is called an \textit{analytic Lorenz map}, 
 if the following holds:
 \begin{enumerate}[(i)]
  \item $f(a)=a$, $f(b)=b$;
  \item $f$ can be represented in the form
 $$
 f(x)=
 \begin{cases}
  \hat f_-(x), & \text{ if } x\in[a,c)\\
  \hat f_+(x), & \text{ if } x\in(c,b],
 \end{cases}
 $$
 where $\hat f_+$ and $\hat f_-$ are analytic unimodal maps (on some intervals containing $(c,b]$ and $[a,c)$ respectively) with the same critical exponent 
 and with $\hat f_-'(c)=\hat f_+'(c)=0$;
 \end{enumerate}
\end{definition}
\begin{figure}[ht]
	\centerline{\includegraphics[width=10cm]{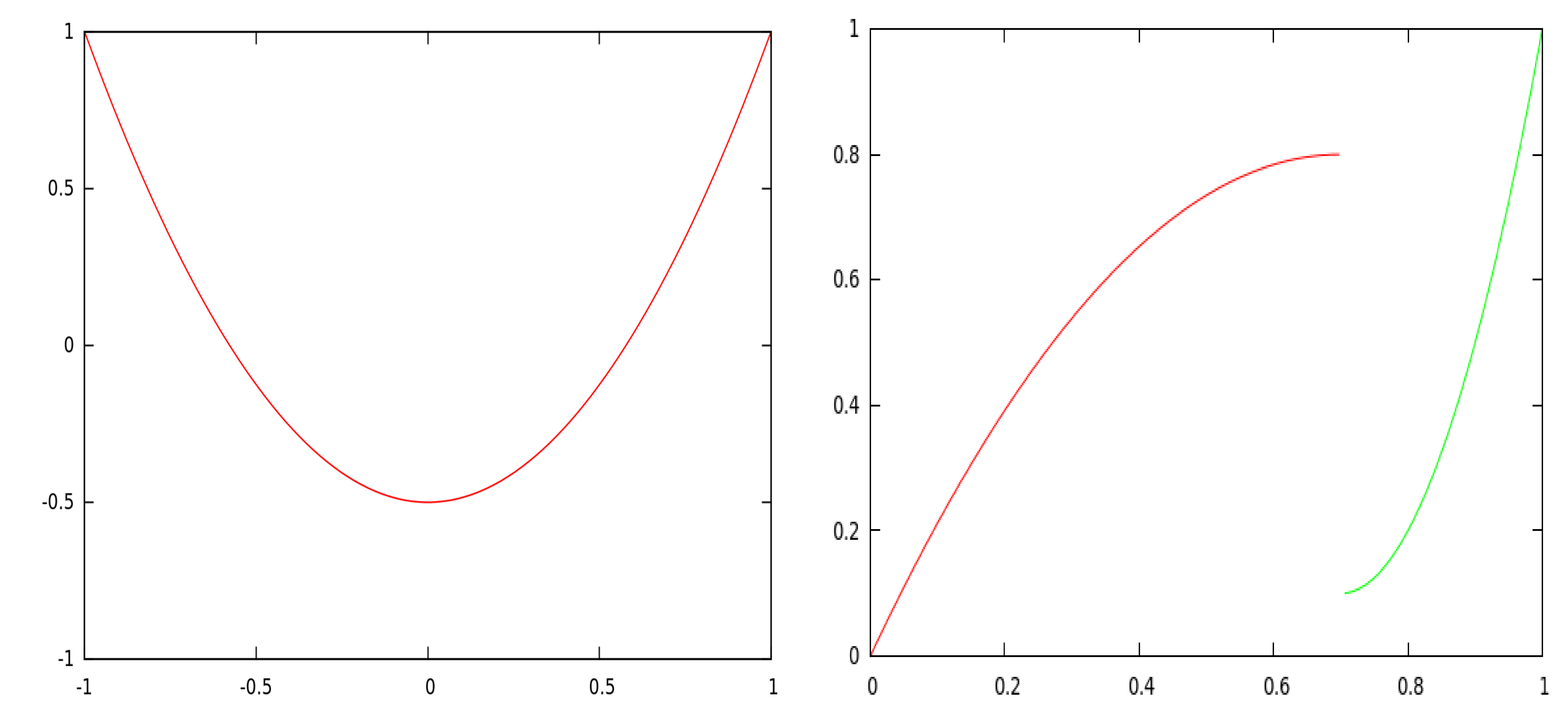}}
	\caption{\label{fig}A graph of a unimodal map on the left and a graph of a Lorenz map on the right.}
\end{figure}
The space af all analytic Lorenz maps, defined on $[0,1]$ and with a fixed critical exponent $\alpha>1$, will be denoted by $\mathfrak L$. (The critical exponent $\alpha>1$ will remain fixed and unchanged throughout the entire paper.) 

The maps $f_-=\hat f_-|_{[a,c]}$ and $f_+=\hat f_+|_{[c,b]}$ will be called respectively the \textit{left} and \textit{right branches} of a Lorenz map $f$.

A Lorenz map $f$ is \textit{nontrivial}, if $f_-(c)> c$ and $f_+(c)< c$. Otherwise, $f$ is called \textit{trivial} and has trivial dynamics. We will say that a Lorenz map $f$ is \textit{weakly nontrivial} if $f_-(c)\ge c$ and $f_+(c)\le c$.

 A (nontrivial) Lorenz map $f\in\mathfrak L$ is \textit{renormalizable}, if there exists a closed interval $C\subset(f_+(c),f_-(c))$, such that $c\in C$, and the first return map of the interior of $C$ is well defined and extends to a weakly nontrivial Lorenz map on $C$ without fixed points in the interior of $C\setminus\{c\}$.
Let $C$ be the maximal such interval. 
(It is easy to check that the maximal interval exists provided that $f$ is renormalizable. The condition on the absence of interior fixed points of the first return maps is crucial here.) We will call it \textit{the renormalization interval} of $f$. The \textit{renormalization} of $f$ is defined as 
$$
\cR f=A\circ R_C\circ A^{-1},
$$
where $R_C$ is the first return map of $f$ to $C$, and $A\colon C\to [0,1]$ is the unique orientation preserving affine homeomorphism between $C$ and $[0,1]$. In particular, $\cR f\in\mathfrak L$.

If $\cR f$ is also renormalizable, then we say that $f$ is \textit{twice renormalizable}. This way we define $n$ times renormalizable Lorenz maps, for all $n=1, 2, 3,\dots$, including $n=\infty$.

\subsection{Combinatorics of renormalizations}
If a Lorenz map $f$ is renormalizable with the renormalization interval $C$, then the critical point $c$ splits $C$ into two 
intervals $C_-=\{x\in C\mid x< c\}$ and $C_+=\{x\in C\mid x> c\}$, and the first return map $R_C$ has the form
$$
R_C(x)=
\begin{cases}
 f^{m_-}(x), & \text{ if } x\in C_-\\
 f^{m_+}(x), & \text{ if } x\in C_+,
\end{cases}
$$
for some positive integers $m_-,m_+\in\bbN$. 
Since $R_C$ is the first return map of the interior of $C$, the interiors of the intervals 
$$
C_-, f(C_-), f^2(C_-),\dots, f^{m_--1}(C_-)
$$
are pairwise disjoint (otherwise, different points of $C_-$ would have different return times) and their relative order determines a permutation $\theta_-(f)$ of $\{0,1,\dots,m_--1\}$. Similarly, the relative order of the intervals 
$$
C_+, f(C_+), f^2(C_+),\dots, f^{m_+-1}(C_+)
$$
determines a permutation $\theta_+(f)$ of $\{0,1,\dots,m_+-1\}$. We define a permutation $\theta(f)$ as the direct product (ordered pair) of permutations $\theta(f)=(\theta_-(f),\theta_+(f))$.
We say that a permutation $\theta=(\theta_-,\theta_+)$ is \textit{Lorenz}, if there exists a renormalizable Lorenz map $f$, such that $\theta=\theta(f)$. The set of all Lorenz permutations will be denoted by $\mathbf P$.

For $n\in\bbN\cup\{\infty\}$ and a subset $\Theta\subset\mathbf P$, let $\mathcal S_\Theta^n\subset\mathfrak L$ be the set of all $n$ times renormalizable Lorenz maps $f$, such that $\theta(\cR^j f)\in\Theta$, for all $j=0,1,2,\dots,n-1$. For $f\in\mathcal S_{\mathbf P}^n$, let $\rho_n(f)$ be the finite or infinite sequence of permutations $(\theta(f),\theta(\cR f),\theta(\cR^2 f),\dots, \theta(\cR^{n-1} f))\subset\mathbf P^n$. For further convenience we also define $\mathcal S_\Theta^0:=\mathfrak L$.  

We say that two infinitely renormalizable Lorenz maps $f$ and $g$ are of the same combinatorial type, if $\rho_\infty(f)=\rho_\infty(g)$.

\subsection{Real bounds}

For any Lorenz map $f=(f_-,f_+)\in\mathfrak L$ with critical point $c_f\in (0,1)$, there exist two $C^\infty$-smooth functions $\eta_f^-\colon [0,c_f^\alpha]\to\bbR$ and $\eta_f^+\colon [0,(1-c_f)^\alpha]\to\bbR$, such that the functions $f_\pm$ can be represented as
$$
f_\pm(x) = \eta_f^\pm(|x-c_f|^\alpha).
$$

\begin{definition}\label{real_bounds_def}
For any pair of real numbers $\delta,\Delta>0$, we say that a Lorenz map $f\in\mathfrak L$ has real $(\delta,\Delta)$-bounds of level $n\in\{0\}\cup\bbN$, if $f\in\mathcal S_{\mathfrak P}^n$, and for all $k=0,1,\dots,n$, the maps $f_k=\cR^k f$ satisfy the following conditions:
$$
\delta \le c_{f_k} \le 1-\delta,\qquad\text{and}\qquad \left| \frac{(\eta_{f_k}^\pm)''}{(\eta_{f_k}^\pm)'}\right| \le \Delta.
$$
We say that $f\in\mathfrak L$ has real $(\delta,\Delta)$-bounds, if it has real $(\delta,\Delta)$-bounds of arbitrarily high level.
\end{definition}

\begin{lemma}\label{derivative_bounds_lemma}
For any pair of real numbers $\delta,\Delta>0$, there exist positive real constants $K_1>1$ and $K_2>0$, such that for any nontrivial Lorenz map $f\in\mathfrak L$ with real $(\delta,\Delta)$-bounds of level $0$, we have
\begin{equation}\label{eta_f_bounds}
\frac{1}{K_1} < |(\eta_{f}^\pm)'| < K_1 \qquad\text{and}\qquad |(\eta_{f}^\pm)''|<K_2.
\end{equation}
\end{lemma}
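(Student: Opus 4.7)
The plan is to combine the quantitative information in the hypothesis with the mean value theorem and a Gronwall-type integration of the nonlinearity bound. I will argue in detail for $\eta := \eta_f^-$; the case of $\eta_f^+$ is symmetric and will be indicated at the end.

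First, I would set up the mean value step. Since $f_-$ is the monotone branch of a unimodal map on $[0,c_f]$ with $f_-(0)=0$ and, by nontriviality, $f_-(c_f)>c_f$, the total variation of $\eta$ on its domain $[0,c_f^\alpha]$ equals $f_-(c_f)$. Combining $f_-(c_f)>c_f\ge\delta$ with the trivial upper bound $f_-(c_f)\le 1$, and the two-sided bound $c_f^\alpha\in[\delta^\alpha,(1-\delta)^\alpha]$, the mean value theorem produces an interior point $y^\ast\in(0,c_f^\alpha)$ at which $|\eta'(y^\ast)|$ lies in an interval with positive endpoints depending only on $\delta$ and $\alpha$.

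Second, I would propagate this pointwise bound to the full domain using the nonlinearity hypothesis. Rewriting $|\eta''/\eta'|\le\Delta$ as $|(\log|\eta'|)'|\le\Delta$ and integrating from $y^\ast$ to an arbitrary $y\in[0,c_f^\alpha]$ yields
$$
\bigl|\log|\eta'(y)| - \log|\eta'(y^\ast)|\bigr| \le \Delta \, c_f^\alpha \le \Delta.
$$
Exponentiating gives a two-sided bound on $|\eta'|$ depending only on $\delta,\Delta,\alpha$, from which the upper bound $|\eta''|\le \Delta\,|\eta'| \le \Delta K_1$ is immediate, yielding $K_2 := \Delta K_1$. The same argument applied to $\eta_f^+$, using $1-f_+(c_f) > 1-c_f \ge \delta$ in place of the lower bound on the total variation, completes the proof.

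No serious obstacle is expected: the lemma is essentially a Gronwall-type integration of the nonlinearity bound, anchored at a point produced by the mean value theorem. The only care needed lies in the bookkeeping that extracts a definite lower bound on the total variation of $\eta_f^\pm$, which relies on combining the nontriviality hypothesis ($f_-(c)>c$ and $f_+(c)<c$) with $c_f\in[\delta,1-\delta]$; without the latter, $f_-(c_f)-c_f$ or $c_f - f_+(c_f)$ could degenerate and no uniform lower bound on $|(\eta_f^\pm)'|$ would be available.
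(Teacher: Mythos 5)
Your proof is correct and takes essentially the same approach as the paper: establish a definite bound on $|(\eta_f^\pm)'|$ at one point from the quantitative nontriviality, propagate it to the whole domain using the nonlinearity bound (the Gronwall/bounded-distortion step), and deduce the second-derivative bound by multiplying. Your write-up is in fact slightly more explicit than the paper's terse sketch, in particular in spelling out the mean value step and the integration of $(\log|\eta'|)'$.
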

\begin{proof}
The condition 
$\left| {(\eta_{f}^\pm)''}/{(\eta_{f}^\pm)'}\right| \le \Delta$ implies that the maps $\eta_{f}^\pm$ have bounded distortion. The condition $\delta\le c_f\le 1-\delta$ together with nontriviality of $f$ implies that the absolute values of the derivatives $|(\eta_{f}^\pm)'|$ are bounded from below by a positive constant. Now boundedness of distortion implies that they are also bounded by some constant from above. Then the inequality on the second derivatives follows.
\end{proof}

The existence of compact sets of Lorenz maps with real bounds of level $\infty$ (using real techniques) has been shown for several specific combinatorial types. Real bounds for {\it monotone} combinatorial types, that is the combinatorics for which the intervals $f(C_\pm), f^2(C_\pm), \ldots,  f^{m_\pm-1}(C_\pm)$  belong to the left ($+$) or right ($-$) component of $[0,1] \setminus c$, have been proved in \cite{Martens_Winckler_2014} for the following return times $m_\pm$: 
$$\alpha+\sigma \le m_- \le 2\alpha-\sigma, \quad b_0 \le m_+ -1 \le (1+(\sigma/\alpha)^2-\beta) b_0,$$
where $\alpha >1$, $\sigma \in (0,1)$, $\beta \in (0,(\sigma/\alpha)^2)$, and $b_0 \in \NN$.  
On the other hand, \cite{Gaidashev_2012} demonstrates existence of real bounds for short return times 
$$(\ln 2 / \ln \alpha +1) \alpha <  m_\pm  <  2 \alpha.$$

\subsection{Analytic extensions of Lorenz maps}

Given a Lorenz map $f\in\mathfrak L$, it follows from the power law~(\ref{f_equals_psi_alpha_phi_equation}) that $f_-$ and $f_+$ have univalent analytic extensions to some corresponding real-symmetric domains $U_-,U_+\subset\bbC$, such that the sets $f_-(U_-)\cup\bbR$ and $f_+(U_+)\cup\bbR$ contain some open neighborhoods of $f_-([a,c])$ and $f_+([c,b])$ respectively. 

\begin{definition}\label{L_r_def}
	For a positive real number $r>0$, let $\mathfrak L_r\subset\mathfrak L$ denote the set of Lorenz maps $f$, for which the following holds:
	\begin{enumerate}[(i)]
		\item there exist domains $U_-,U_+$ with the above properties and such that the sets $f_-(U_-)\cup\bbR$, $f_+(U_+)\cup\bbR$ contain open $r$-neighborhoods of $f_-([a,c])$ and $f_+([c,b])$ respectively;
		\item\label{L_r_def_item2}
		 there exist neighborhoods $V_-,V_+\subset\bbC$ of the critical point $c$, in which the power laws~(\ref{f_equals_psi_alpha_phi_equation}) for $f_-$ and $f_+$ respectively hold, and the closures of the sets $f_-(U_-\cap V_-)$ and $f_+(U_+\cap V_+)$ contain disks of radius $r$, centered at $f_-(c)$ and $f_+(c)$ respectively.
	\end{enumerate}
\end{definition}

\subsection{Power-like Lorenz maps}

\begin{definition}
	Let $U_+,U_-, D\subset\bbC$ be three simply connected real-symmetric domains such that $U_\pm\Subset D$, and $I_+:=U_+\cap\bbR$ and $I_-:=U_-\cap\bbR$ are two disjoint open intervals with a common endpoint $c\in\bbR$. A pair of maps 
	$$
	f_\pm\colon U_\pm\to D
	$$
	is called a \textit{power-like Lorenz map}, if $f_\pm$ are conformal diffeomorphisms of $U_\pm$ onto $(D\setminus\bbR)\cup f_\pm(I_\pm)$ and the maps $f_+$ and $f_-$ restrict respectively to the left and right branches of a Lorenz map $f\in \mathfrak L$.	
\end{definition}

\begin{figure}[ht]
\centering
\vspace{2mm}       
\begin{tabular}{ c c }
\includegraphics[height=4cm]{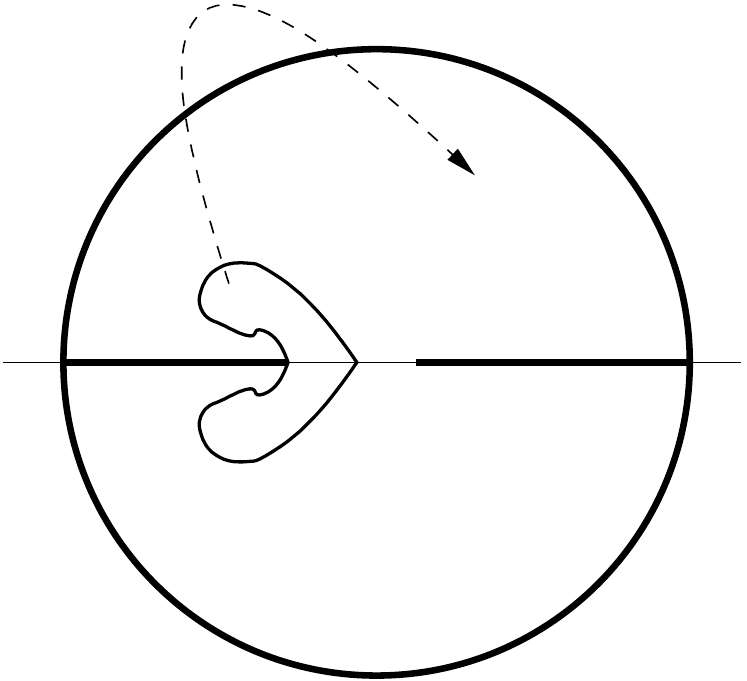} &  \includegraphics[height=4cm]{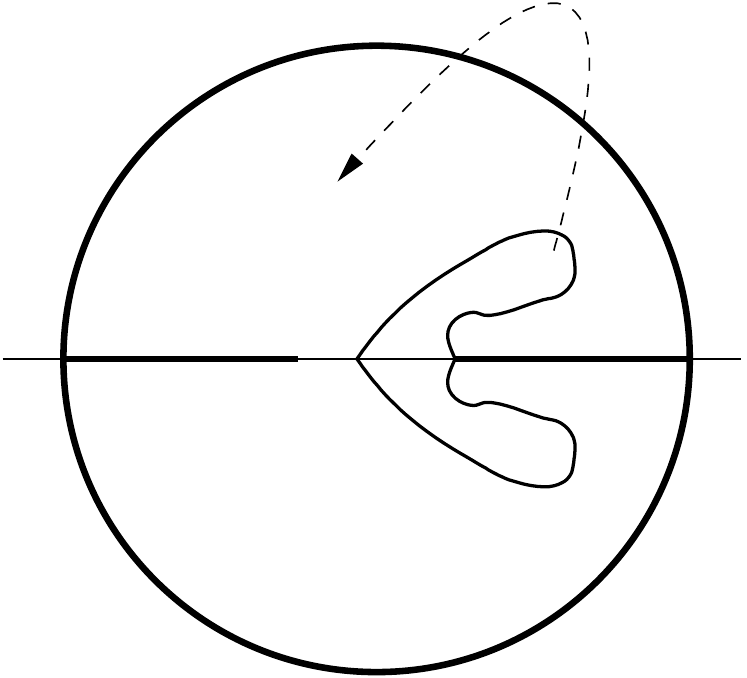} 
\end{tabular}
\put(-28,50){\small $f_+$}
\put(-103,25){\small $D$}
\put(-33,15){\small $U_+$}
\put(-75,-6){\footnotesize $c$}
\put(-86,-7){\footnotesize $0$}
\put(-60,-7){\footnotesize $1$}
\put(-248,50){\small $f_-$}
\put(-180,20){\small $D$}
\put(-217,15){\small $U_-$}
\put(-207,-6){\footnotesize $c$}
\put(-193,-7){\footnotesize $1$}
\put(-217,-7){\footnotesize $0$}
\caption{A power-like Lorenz map}
\label{fig:comp_bounds_fig}       
\end{figure}

\begin{definition}
	For a simply connected domain $U\subset\bbC$ and a set $X\Subset U$, let
	$
	\mod(X,U)
	$
	denote the supremum of the moduli of all annuli $A\subset U\setminus\overline X$ that separate $\partial U$ from $\overline X$. 
\end{definition}

\begin{definition}\label{H_nu_def}
	For $\nu\in (0,1/2)$, let $\mathbf H(\nu)$ denote the set of all power-like Lorenz maps $f=(f_-,f_+)$, such that the following conditions are simultaneously satisfied:
	\begin{enumerate}[(i)]
	\item\label{H_nu_item1} $U_\pm\Subset f_\pm(U_\pm)$ and $\mod(U_\pm, f_\pm(U_\pm))\ge \nu$;
	\item\label{H_nu_item2} $\diam(D)\le 1/\nu$;
	\item\label{H_nu_item3} $c\in[\nu,1-\nu]$;
	\item\label{H_nu_item4} $f\in\mathfrak L_\nu$.
	\end{enumerate}
\end{definition}

\subsection{Statement of results}

We are now ready to give precise statements of our main results.

\begin{theorem}[\textbf{Complex bounds}]\label{main_theorem}
For any pair of real numbers $\delta,\Delta>0$ and a finite set $\Theta\subset\mathfrak P$, there exists a real number $\nu>0$, such that for any real $r>0$, 
there exists $n_0=n_0(r,\delta,\Delta,\Theta)\in\bbN$  with the property that for any $n \ge n_0$ and $f\in\mathfrak L_r \cap \mathcal S_\Theta^{n+1}$ with real $(\delta,\Delta)$-bounds of level $n$, the renormalization $\cR^nf$ extends to a power-like Lorenz map from~$\mathbf H(\nu)$.
\end{theorem}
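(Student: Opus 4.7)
The plan is to adapt the Yampolsky-style argument of \cite{Ya1} to the Lorenz setting, using the real $(\delta,\Delta)$-bounds and the finiteness of $\Theta$ to produce complex extensions of the return branches of $\cR^n f$ with a uniform modulus. First I would unpack the definition of $\mathfrak L_r$: the branches $f_\pm$ of the initial Lorenz map admit univalent extensions to real-symmetric domains $U_\pm^{(0)}$ whose images cover definite $r$-neighborhoods of $f_\pm([a,c])$ and $f_\pm([c,b])$, and the power law $f_\pm=\psi_\pm\circ|\phi_\pm|^\alpha$ holds in fixed complex neighborhoods of $c$ with target disks of radius $r$ around $f_\pm(c)$. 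This is the only place where the seed parameter $r$ enters, and it is the reason $n_0$ must be allowed to depend on $r$: a definite number of renormalizations will be required before this seed is absorbed into a universal class.

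Next I would invoke the combinatorial analysis of Section~\ref{dyn_intervals_sec}. Finiteness of $\Theta$ bounds the return times $m_\pm^{(k)}$ at each level $k\le n$ by some $M=M(\Theta)$, and combined with the real $(\delta,\Delta)$-bounds (which by Lemma~\ref{derivative_bounds_lemma} yield uniformly bounded distortion for $\eta_{f_k}^\pm$) this forces uniform commensurability between each orbit interval $f^j(C_\pm^{(k)})$ and its host gap in the nested dynamical partition. In particular, one obtains a positive lower bound, depending only on $\delta,\Delta,\Theta$, on the moduli of the annuli separating each orbit interval from the rest of the critical orbit. These are the Koebe reservoirs that drive the complex pullback.

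The core of the proof is the complex extension. For each of the two branches of $\cR^n f$, the return iterate $f^{m_\pm^{(n)}}$ decomposes as a composition of at most $M$ restrictions of $f_-$ or $f_+$ to successive orbit intervals of $C_\pm^{(n)}$. I would begin with a large real-symmetric complex neighborhood of the final image interval and pull it back through this chain using slit-plane Poincar\'e neighborhoods and the complex-analytic lemmas of Section~\ref{Complex_neib_sec}. Each pullback by a univalent branch of $f_\pm^{-1}$ into a gap with definite modulus gains (or at worst loses a controlled fraction of) the modulus; the branches avoid $c$ except at the terminal step, where the power law together with item~(\ref{L_r_def_item2}) of Definition~\ref{L_r_def} supplies disks of definite radius around $f_\pm(c)$ and produces a power-like structure with $U_\pm\Subset f_\pm(U_\pm)$ of modulus at least $\nu$. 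Running this construction at level $n$ and iterating over $n$, one shows that the modulus converges to a universal value $\nu=\nu(\delta,\Delta,\Theta)$, from which the conclusion $\cR^n f\in\mathbf H(\nu)$ follows after checking items (\ref{H_nu_item1})--(\ref{H_nu_item4}) in turn.

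The main obstacle is the discontinuity at $c$, which is absent from the unimodal and critical circle map analogues of \cite{Ya1}. One must verify, using the Lorenz combinatorics $\theta(\cR^k f)\in\Theta$, that the complex pullback domains attached to a given side of $c$ never wrap across $c$ before the power law is applied, and that the orbit intervals of the two branches do not interleave in a way that would obstruct univalence of the pullback branches. This is precisely where the combinatorial machinery of Section~\ref{dyn_intervals_sec} is needed, and where the dichotomy in the statement of the theorem enters: a universal $\nu$ can be produced only after the initial $r$-sized extensions have been contracted and then improved through sufficiently many renormalizations, whereas the dependence of $n_0$ on $r$ simply reflects how long that initial improvement phase takes.
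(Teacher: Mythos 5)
Your overall strategy---pulling back slit-plane hyperbolic neighborhoods through the orbit, fueled by the real $(\delta,\Delta)$-bounds, and using the power law at the last step to produce a power-like structure---is the correct framework, and you correctly identify the role of Definition~\ref{L_r_def} item~(\ref{L_r_def_item2}), the reason $n_0$ must depend on $r$, and the importance of the combinatorial analysis near $c$. However, there is a fundamental error at the heart of the pullback. You claim that finiteness of $\Theta$ bounds the return times $m_\pm^{(k)}$ by some $M(\Theta)$ and then conclude that $f^{m_\pm^{(n)}}$ ``decomposes as a composition of at most $M$ restrictions of $f_-$ or $f_+$.'' This is false. Finiteness of $\Theta$ bounds only the number of iterates of the \emph{previous-level} prerenormalization $p\cR^{n-1}f$ needed to return to $C_n$. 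The total number $m_n^\pm$ of iterates of $f$ itself grows roughly as a product over all $n$ levels and is therefore not bounded in terms of $\Theta$ alone. A naive step-by-step Schwarz pullback through a chain of exponentially many branches of $f_\pm^{-1}$ cannot be controlled by the arguments you describe.

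The paper's proof handles this growth in two ways that your proposal omits. First, Lemma~\ref{definite_neighb_lemma} shows that a long pullback chain loses only a controlled hyperbolic radius, \emph{provided} the orbit intervals have bounded total length (from the combinatorial bound of Lemma~\ref{Q_n_orbit_lemma}) and individually vanishing length (from the real bounds via Lemma~\ref{O_n_Q_n_decrease_lemma}); this is exactly where taking $n$ large absorbs the seed $r$, and it feeds into Lemma~\ref{pRnf_distorted_root_lemma}. Second, and essentially, the Main Lemma~\ref{main_main_lemma} does not pull back through the entire orbit at once: it telescopes $p\cR^n f_\pm=f_{n,m}\circ h_{n,m-1}\circ\cdots\circ h_{n,0}$ into a fixed number $m$ of stages, each a transfer map $T^{n-k}f_\pm$ or a level-$(n-m)$ prerenormalization, and runs a finite induction over these stages in which the pullback domain is propagated as a uniformly $(K_1,K_2)$-bounded \emph{flower} (Lemma~\ref{induction_lemma}). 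The quantitative gain that produces a definite $\nu$ is inequality~(\ref{main_inequality}) combined with the exponential growth of $|L_{n-m}|/|C_{n\pm}|$ in $m$ (via Lemma~\ref{Interval_not_too_short_lemma}): one chooses $m$ large, independently of $n$, so that the preimage under $(p\cR^n f_\pm)^{-1}$ is much smaller than $L_{n-m}$. Your phrase ``the modulus converges to a universal value'' after iterating over $n$ does not describe this mechanism: the definite modulus is achieved by a \emph{fixed} choice of telescope depth $m$, not by an iterative improvement in $n$. Without the telescoping decomposition and the flower control, the proposed pullback cannot close.
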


A key step in the proof of Theorem~\ref{main_theorem} is Lemma~\ref{main_main_lemma}, stated in the beginning of Section~\ref{Main_proofs_sec}. As a corollary from complex bounds, we deduce the following theorem:

\begin{theorem}[\textbf{Analyticity of renormalization}]\label{main_theorem2}
For any pair of real numbers $\delta,\Delta>0$ and a finite set $\Theta\subset\mathfrak P$, there exist a positive integer $N>0$, a real-symmetric analytic Banach manifold $\mathbf M$ whose real slice $\mathbf M^\bbR$ consists of analytic Lorenz maps, and an open set $\mathcal O\subset\mathbf M$, such that the following holds:
\begin{enumerate}
	\item\label{Analyt_prop1} the $N$-th iterate $\cR^N\colon \mathcal O\to \mathbf M$ is defined and is a real-symmetric analytic operator on $\mathcal O$;
	
	\item\label{Analyt_prop2} for every $f\in\mathcal S_\Theta^\infty$ with real $(\delta,\Delta)$-bounds, there exists a positive integer $M>0$, such that for every $n\ge M$, we have $\cR^n f\in\mathcal O$. 
\end{enumerate}
\end{theorem}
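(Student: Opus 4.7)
The plan is to model $\mathbf{M}$ on pairs of bounded holomorphic functions in the power-law decomposition of Lorenz maps, define $\mathcal{O}\subset\mathbf{M}$ via Theorem~\ref{main_theorem}'s complex bounds together with the combinatorial constraint $\rho_N(f)\in\Theta^N$, and then establish analyticity of $\cR^N$ stratum-by-stratum, using the fact that once the return times are fixed, renormalization is an explicit finite composition of holomorphic branches followed by an affine rescaling. Concretely, fix a real-symmetric Jordan domain $W\Subset\bbC$ containing $[0,1]$ and large enough to accommodate all intermediate compositions that will appear, and take local coordinates on $\mathbf{M}$ of the form $(\psi_-,\psi_+,c)$, where $c$ is a complex parameter near $(0,1)$ and $(\psi_-,\psi_+)$ are bounded holomorphic functions on $W$. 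The normalizations $\psi_-(c^\alpha)=0$ and $\psi_+((1-c)^\alpha)=1$ (which encode $f(0)=0$, $f(1)=1$) cut out a codimension-two real-symmetric complex submanifold transversally, and its real slice is identified with a neighborhood in $\mathfrak{L}$ via $f_\pm(x)=\psi_\pm(|x-c|^\alpha)$.

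Let $\nu$ be the constant produced by Theorem~\ref{main_theorem} for the data $(\delta,\Delta,\Theta)$, and define $\mathcal{O}\subset\mathbf{M}$ as the set of maps that (a) extend to power-like Lorenz maps in $\mathbf{H}(\nu/2)$, (b) are $N$-times renormalizable, and (c) satisfy $\rho_N(f)\in\Theta^N$. The integer $N$ is chosen (via Theorem~\ref{main_theorem} applied to a reference $r>0$ controlling the size of $W$) large enough so that $\cR^N f\in\mathbf{H}(\nu)\subset\mathbf{M}$ for every $f\in\mathcal{O}$. Openness of $\mathcal{O}$ follows from two facts: the defining inequalities of $\mathbf{H}(\nu/2)$ are open in the Banach topology, and the combinatorial type $\rho_N(f)$ is locally constant because the endpoints of the successive renormalization intervals arise as preimages, under a uniformly bounded number of iterates of $f$, of hyperbolic periodic points (their hyperbolicity being uniform on $\mathcal{O}$ thanks to the moduli estimates in $\mathbf{H}(\nu/2)$), and therefore depend holomorphically on $f$ via the implicit function theorem.

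On each open stratum of $\mathcal{O}$ where the combinatorial word $\rho_N(f)=(\theta_0,\dots,\theta_{N-1})\in\Theta^N$ is fixed, all return times in the $N$ consecutive renormalizations are determined and uniformly bounded, so $\cR^N f$ is assembled according to a fixed combinatorial recipe from finitely many compositions of the holomorphic branches of $f$ and a single affine rescaling. The complex bounds guarantee that every intermediate composition takes place in a complex domain on which the relevant branches are bona fide bounded holomorphic maps, which makes this composition an analytic operation in the Banach topology; the affine rescaling depends analytically on the endpoints of the renormalization interval, hence on $f$, by the structural stability argument of the previous paragraph. Real symmetry of $\cR^N$ is inherited from the real symmetry of the dynamics and of the normalization.

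Finally, property~(\ref{Analyt_prop2}) follows directly from Theorem~\ref{main_theorem}: given $f\in\mathcal{S}_\Theta^\infty$ with real $(\delta,\Delta)$-bounds, pick $r>0$ with $f\in\mathfrak{L}_r$ and set $M:=n_0(r,\delta,\Delta,\Theta)$; then for every $n\ge M$, the map $\cR^n f$ extends to $\mathbf{H}(\nu)\subset\mathbf{H}(\nu/2)$ and, being in $\mathcal{S}_\Theta^\infty$, is $N$-times renormalizable with combinatorics in $\Theta^N$, so $\cR^n f\in\mathcal{O}$. The main obstacle I anticipate is technical bookkeeping: making $\mathbf{M}$ precise (the choice of $W$, the range of $c$, transversality of the normalization equations, the identification of $\mathbf{M}^\bbR$ with a subset of $\mathfrak{L}$) in a way that $\cR^N(\mathcal{O})\subset\mathbf{M}$ and real symmetry is preserved, together with the structural stability argument delivering the required holomorphic dependence of the renormalization intervals on $f$.
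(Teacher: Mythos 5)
Your proposal takes essentially the same route as the paper: both model $\mathbf{M}$ on bounded holomorphic functions appearing in the power-law decomposition $f_\pm=\psi_\pm\circ p_{\alpha\pm}\circ(\text{affine})$, use Theorem~\ref{main_theorem} to guarantee $\cR^N$ lands back in the manifold, and exploit the finiteness of $\Theta$ so that $\cR^N$ becomes a finite composition of holomorphic branches plus an affine rescaling. Where the paper simply invokes ``continuity of renormalization for maps of bounded type and with real bounds,'' you supply genuine extra content: the stratification by combinatorial word, the observation that the renormalization interval endpoints are periodic points whose persistence and holomorphic dependence on $f$ follow from the implicit function theorem, and the remark that uniform hyperbolicity comes from the moduli estimates. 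This is a useful elaboration of a step the paper treats as folklore. Two aspects diverge from the paper's proof and deserve care. First, your definition of $\mathcal{O}$ via membership in $\mathbf{H}(\nu/2)$ does not sit well with complex (non-real-symmetric) perturbations of a Lorenz map: $\mathbf{H}(\nu)$ is a class of power-like Lorenz maps with real-symmetric slit codomains, and nearby non-real maps in $\mathbf{M}$ will not literally belong to it. The paper sidesteps this by defining $\mathcal{O}$ abstractly as an open neighborhood in $\mathbf{A}_{s_0/2}$ of the compact set $\mathcal{K}$ of real Lorenz maps with real $(\delta,\Delta)$-bounds of level $N$, rather than by imposing a ``complex bounds'' condition on each element of $\mathcal{O}$. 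Second, you posit a domain $W$ ``large enough to accommodate all intermediate compositions'' without justifying that a single such $W$ works uniformly over the relevant class; the paper supplies this uniformity via Montel's theorem and the closure argument, producing uniform constants $s_0$ and $r$, and it is exactly this $r$ that feeds into the choice $N=n_0(r,\delta,\Delta,\Theta)$. If you replace your condition~(a) with the paper's compactness-based neighborhood and carry out the Montel step explicitly, your argument becomes a somewhat more detailed version of the paper's proof.
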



\section{Complex neighborhoods}\label{Complex_neib_sec}
In this section we collect the tools that we later use to control the behavior of inverse branches of Lorenz maps in complex neighborhoods of real intervals. One of the important ingredients is a strengthened version of Lemma~3.3 from~\cite{FM2}. 
The results of this section are quite general and can be applied to arbitrary real-symmetric analytic maps.

For an open interval $J\subset\bbR$, we define the domain $\bbC(J)\subset\bbC$ as $\bbC(J)=(\bbC\setminus\bbR)\cup J$. We let $\bbD(J)$ denote the intersection of $\bbC(J)$ with the open unit disk, centered at the midpoint of $J$. 
Given an open interval $J\subset\bbR$ and a real number $t>0$, we denote by $D_t(J)\subset \bbC$ the set of all $z\in\bbC$ that view $J$ under angle $\ge 2\arctan(t)$. Each $D_t(J)$ is a hyperbolic neighborhood of $J$ of some radius $r=r(t)$ in $\bbC(J)$. In other words, $D_t(J)$ is the set of all points in $\bbC(J)$, whose hyperbolic distance to $J$ is less than $r$. A version of the Schwarz lemma for such domains can be formulates as follows (c.f.~\cite{Ya1}):
\begin{lemma}[\textbf{Schwarz Lemma}]\label{Schwarz_lemma}
Consider an open interval $J\subset\bbR$ and let $\phi\colon\bbC(J)\to\bbC(J)$ be an analytic map, such that $\phi(J)\subset J$. Then for any $t>0$, we have $\phi(D_t(J))\subset D_{t}(J)$.
\end{lemma}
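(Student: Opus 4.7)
The plan is to deduce the lemma directly from the Schwarz--Pick inequality applied on the hyperbolic Riemann surface $\bbC(J)$, combined with the geometric fact (quoted in the paragraph preceding the lemma) that $D_t(J)$ coincides with the hyperbolic $r(t)$-neighborhood of $J$ inside $\bbC(J)$ for some $r(t)>0$. Since the complement $\bbR\setminus J$ of $\bbC(J)$ contains at least two points, $\bbC(J)$ is hyperbolic, so the holomorphic self-map $\phi$ is a weak contraction of the corresponding hyperbolic distance $d_{\bbC(J)}(\cdot,\cdot)$.

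Given this setup, the proof reduces to one observation. Fix $z \in D_t(J)$, so that $d_{\bbC(J)}(z,J)\le r(t)$. Choose a sequence $x_n \in J$ with $d_{\bbC(J)}(z,x_n) \to d_{\bbC(J)}(z,J)$. By Schwarz--Pick,
$$
d_{\bbC(J)}(\phi(z),\phi(x_n)) \le d_{\bbC(J)}(z,x_n),
$$
and $\phi(x_n) \in \phi(J) \subset J$ by hypothesis. Passing to the infimum therefore yields $d_{\bbC(J)}(\phi(z),J) \le r(t)$, which means $\phi(z) \in D_t(J)$.

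The only genuinely nontrivial ingredient is the identification of $D_t(J)$ with a hyperbolic neighborhood of $J$ in $\bbC(J)$. If one wishes to verify this from scratch rather than quote it, the natural route is to normalize $J=(-1,1)$ and uniformize $\bbC(J)$ by the upper half-plane $\HH$ via $z \mapsto \sqrt{(z-1)/(z+1)}$ with the principal branch, under which $J$ is carried to the positive imaginary axis (a complete hyperbolic geodesic in $\HH$). The viewing-angle condition ``$J$ is seen from $z$ under angle at least $2\arctan t$'' transforms into a Euclidean cone condition around this geodesic, and such cones are precisely the hyperbolic tubes around it; the hyperbolic radius of the tube is a strictly increasing function $r=r(t)$ of the angle parameter. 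I expect this bookkeeping to be the main technical step, though a routine one; the rest is a direct application of Schwarz--Pick.
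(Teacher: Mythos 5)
Your argument is correct and is the standard one — the same Schwarz--Pick contraction of the hyperbolic metric on $\bbC(J)$ that the paper invokes by citing Yampolsky, together with the identification (stated in the paragraph preceding the lemma) of $D_t(J)$ as a hyperbolic tube around $J$. One small slip: you write ``$d_{\bbC(J)}(z,J)\le r(t)$'' and conclude ``$d_{\bbC(J)}(\phi(z),J)\le r(t)$,'' but since membership in $D_t(J)$ is a strict inequality $< r(t)$, you should instead carry the sharper chain $d_{\bbC(J)}(\phi(z),J)\le d_{\bbC(J)}(z,J)< r(t)$, which is what the Schwarz--Pick step actually gives (and eliminates the need for the approximating sequence $x_n$, since any $x\in J$ with $d_{\bbC(J)}(z,x)<r(t)$ suffices).
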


\begin{figure}[th]
	\centering
	\vspace{2mm}       
	\includegraphics[height=3cm]{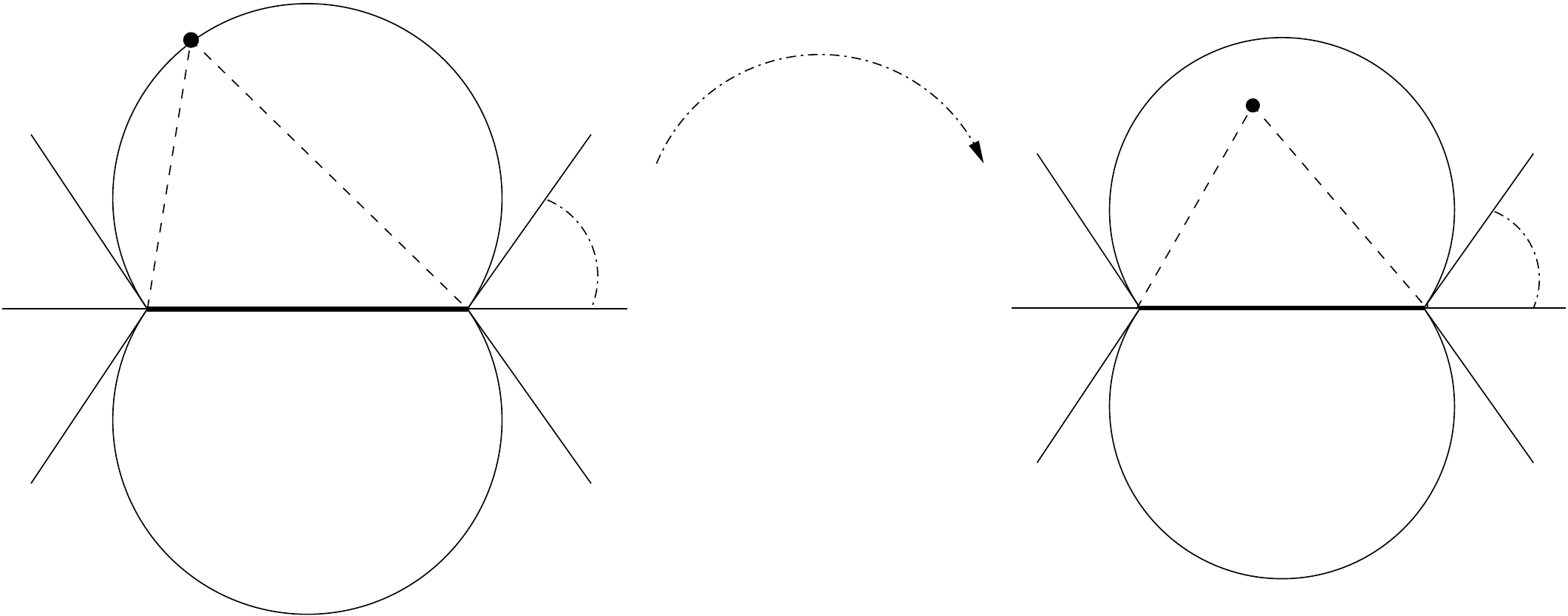} 
	\caption{Hyperbolic neighborhoods in Schwarz Lemma.}
	\label{fig:schwarz}       
\end{figure}

Let $|J|$ stand for the length of $J$. If the map $\phi$ in the above lemma is defined only in the slit disk $\bbD(J)$, then the following analogous statement holds (compare with Lemma~4.4 from~\cite{Ya_bicritical}):

\begin{lemma}\label{Generalized_S_lemma}
 Consider an open interval $J\subset\bbR$ and set $a=|J|/2$. Assume, $a<1$ and let $\phi\colon\bbD(J)\to\bbC(J)$ be an analytic map, such that $\phi(J)\subset J$. Then for any $t>0$, such that $D_t(J)\subset\bbD(J)$, we have
 $$
 \phi(D_t(J))\subset D_{\tilde t}(J),\qquad\text{where}\qquad \tilde t=\frac{t^2-a^2}{t(1+a^2)}.
 $$
\end{lemma}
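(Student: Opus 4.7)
My plan is to reduce the statement, via the Schwarz-Pick lemma, to an explicit hyperbolic distance computation in a subdomain of $\bbH$, which I would then evaluate through a chain of conformal maps. After normalizing $J = (-a, a)$, the Schwarz-Pick inequality applied to $\phi\colon \bbD(J) \to \bbC(J)$ together with $\phi(J) \subset J$ yields $d_{\bbC(J)}(\phi(z), J) \le d_{\bbD(J)}(z, J)$ for every $z \in \bbD(J)$; since each $D_s(J)$ is the hyperbolic $r(s)$-neighborhood of $J$ in $\bbC(J)$ with $r(s) = \log\cot(\arctan(s)/2)$, it suffices to establish $\sup_{z \in D_t(J)} d_{\bbD(J)}(z, J) \le r(\tilde t)$. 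To compute that supremum, I would uniformize $\bbC(J) \to \bbH$ by $\Psi(z) = \sqrt{(z-a)/(z+a)}$, so that $J \mapsto L := i\bbR_+$, $D_t(J) \mapsto S_t = \{\arctan t \le \arg w \le \pi - \arctan t\}$, and $\Omega := \Psi(\bbD(J))$ becomes $\bbH$ with two symmetric ``teeth'' removed, each bounded by an analytic arc passing through $e^{\pm i\arctan a}$ on the unit circle. The reflection symmetry of $\Omega$ across $L$ reduces the supremum to $w$ on the boundary ray $\Gamma_t = \{re^{i\arctan t}: r > 0\}$ of $S_t$, and the anti-conformal involution $w \mapsto 1/\bar w$---which preserves both $\Omega$ and $L$ and acts on $\Gamma_t$ by $r \mapsto 1/r$---forces $d_\Omega(\cdot, L)|_{\Gamma_t}$ to be maximized at the fixed point $w_0 = e^{i\arctan t}$.

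Next I would realize $\Omega$ as the Schottky double of its first-quadrant half $\Omega^+ := \Omega \cap \{0 < \arg w < \pi/2\}$ across $L$. An explicit conformal uniformization $F\colon \Omega^+ \to \bbH$ is given by the composition $F = \sigma \circ T \circ (\,\cdot\,)^2$: squaring sends $\Omega^+$ onto $\bbH$ minus the closed upper half-disk centered at $c_0 = (1+a^2)/(1-a^2)$ of radius $r_0 = 2a/(1-a^2)$; the M\"obius transformation $T(Z) = (Z - (c_0 - r_0))/((c_0 + r_0) - Z)$ straightens this into $\bbH$ minus the open first quadrant; and $\sigma(\xi) = -\xi^2$ sends the remaining open second quadrant onto $\bbH$. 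Under $F$, the arc $L$ maps onto the interval $(-1, -\alpha^8) \subset \partial \bbH$ with $\alpha^2 = (1-a)/(1+a)$, and Schottky doubling of $\bbH$ across that interval is precisely $\bbC(J')$ for $J' = (-1, -\alpha^8)$; hence $F$ extends to a conformal isomorphism $\tilde F\colon \Omega \to \bbC(J')$ carrying $L$ to $J'$, and $d_\Omega(w_0, L) = d_{\bbC(J')}(\tilde F(w_0), J')$.

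The inclusion $\phi(D_t(J)) \subset D_{\tilde t}(J)$ therefore reduces to showing that $\tilde F(w_0)$ views $J'$ under visual angle exactly $2\arctan \tilde t$. Evaluating $F(w_0) = \sigma(T(e^{2i\arctan t}))$ with the double-angle identity $e^{2i\arctan t} = (1 - t^2 + 2it)/(1+t^2)$, simplifying the M\"obius action of $T$, applying the final squaring $\sigma$, and reducing the resulting visual angle on $J' = (-1, -\alpha^8)$ by a standard half-angle identity, the computation collapses to $2\arctan \tilde t$ with $\tilde t = (t^2 - a^2)/(t(1+a^2))$. The principal technical obstacle is in carrying out this multi-step trigonometric simplification cleanly while keeping track of the branch choices in the square roots and the orientations of the Schottky-doubled boundary arcs.
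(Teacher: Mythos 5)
Your overall strategy — reduce to a hyperbolic-distance estimate via Schwarz–Pick, uniformize to $\bbH$, locate the extremal point on the boundary of $D_t(J)$, and compute there — is structurally the same as the paper's, and the extremal point you identify, $w_0=e^{i\arctan t}$, is indeed the image under $\Psi$ of the paper's apex $z_t=ai/t$. The uniformization details (the values $c_0$, $r_0$, $\alpha^2$, the map $F=\sigma\circ T\circ(\cdot)^2$, and the conclusion $F(L)=(-1,-\alpha^8)$) also check out. However, there is a genuine gap at the step where you assert that the anti-conformal involution $w\mapsto 1/\bar w$ ``forces'' $d_\Omega(\cdot,L)|_{\Gamma_t}$ to be maximized at its fixed point $w_0$. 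Invariance under $r\mapsto 1/r$ only tells you the restriction to $\Gamma_t$ is symmetric in $\log r$, hence has a critical point at $r=1$; it does not rule out other interior critical points, nor the supremum being attained only in the limits $r\to 0$ and $r\to\infty$, where the value is the strictly smaller number $\log\cot(\arctan(t)/2)$. What you actually need is unimodality of $d_\Omega(\cdot,L)$ along $\Gamma_t$, and symmetry alone does not give it.

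The paper supplies exactly this missing ingredient, in a form that does not port directly to your coordinates. Working with the explicit map $F(z)=(a^2+1)z/(z^2+1)\colon\bbD(J)\to\bbC(J)$ and the factorization $R(F(z))=-R(z)\,R(a^2z)$, where $R(z)=(z-a)/(z+a)$, the problem becomes: minimize $\arg R(a^2z)$ over the circular arc $\partial D_t(J)\cap\bbH$, since $\arg R(z)=2\arctan t$ is constant there. The level sets of $\arg R$ are circular arcs through $\pm a$, while $a^2\bigl(\partial D_t(J)\cap\bbH\bigr)$ is a circular arc through $\pm a^3$; two distinct circles meet in at most two points, so each level of $\arg R(a^2\cdot)$ is attained at most twice along the arc, which (together with the endpoint values being $\pi$) forces a unique interior minimum, located by symmetry at the apex. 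In your picture, after the final squaring $\sigma=-(\cdot)^2$, the curve $\tilde F(\Gamma_t)$ in $\bbC(J')$ is no longer a circular arc, so the analogous ``two circles meet in at most two points'' count is unavailable and the unimodality requires a separate argument that you have not provided. The remaining trigonometric evaluation of $\tilde F(w_0)$ is sound and would produce the correct $\tilde t$, but until the extremality of $w_0$ is actually established the proof is incomplete.
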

\begin{proof}
 Without loss of generality we can assume that $J=(-a,a)$. A direct computation shows that the map
 $$
 F(z)=\frac{(a^2+1)z}{z^2+1}
 $$
 is a real-symmetric conformal diffeomorphism from $\bbD(J)$ to $\bbC(J)$. Furthermore, $F(a)=a$ and $F(-a)=-a$. For $z\in\bbH$, the argument of the complex number
 $$
 R(z)=\frac{z-a}{z+a}
 $$
 is equal to the angle under which the interval $J$ is viewed from $z$. For $z\in\bbH\cap\bbD$, a direct computation yelds
 $$
 R(F(z))=-R(z)R(a^2z).
 $$
 The minimum
 $$
 \min_{z\in \partial (D_t(J)\cap\bbH)}\arg(R(a^2z))
 $$
 must be achieved at a unique point $z_t\in \partial (D_t(J)\cap\bbH)$ that is the point of tangency between the circular arc 
 $$
 a^2\partial (D_t(J)\cap\bbH) := \{a^2z \mid z\in \partial (D_t(J)\cap\bbH)\}
 $$
 and some circle through the points $a$ and $-a$. Due to the vertical symmetry, the point $z_t$ must lie on the imaginary axis, so $z_t=ai/t$. Now it follows that
 $$
 \min_{z\in \partial (D_t(J)\cap\bbH)} \arg [R(F(z))]
 $$
 is achieved at the same point $z_t$. 
 Thus, the boundary of the smallest set $D_{\tilde t}(J)$ containing $F(D_t(J))$, passes through the point $F(z_t)$. According to a direct computation, $F(z_t)$ lies on the imaginary axis, and the expression for $\tilde t$ easily follows. Now the lemma follows directly from Lemma~\ref{Schwarz_lemma}. 
\end{proof}

We note that the statement of our Lemma~\ref{Generalized_S_lemma} is stronger than Lemma~3.3 from~\cite{FM2}. First of all, Lemma~\ref{Generalized_S_lemma} provides a precise (rather than just asymptotic) expression for $\tl t$, but more importantly, it does not require $\phi$ to be univalent and analytic in the whole disk $\bbD$. The later enables us to prove the following:

\begin{lemma}\label{definite_neighb_lemma}
For positive real numbers $L, \Delta t>0$, such that $0<\Delta t<1$, there exists $d>0$, for which the following property holds:
let $I_1,\dots,I_{n+1}\subset\bbR$ be a finite family of 
intervals and for each $k=1,\dots,n$, let $f_k\colon \bbD(I_k)\to\bbC(I_{k+1})$ be a real-symmetric analytic map. If 
\begin{equation}\label{Interval_condidtions}
\sum_{k=1}^n |I_k|\le L\qquad \text{and}\qquad \max_{1\le k\le n}|I_k|\le d,
\end{equation}
then for any $t\in\bbR$ and $k\in\bbN$, such that $\Delta t<t<1$ and $k\le n$, the composition $F_k=f_k\circ f_{k-1}\circ\dots\circ f_1$ is well defined over $D_t(I_1)$ and maps this neighborhood to $D_{t-\Delta t}(I_{k+1})$.
\end{lemma}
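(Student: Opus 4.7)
The plan is to iterate Lemma~\ref{Generalized_S_lemma} along the chain $f_1,\dots,f_n$, tracking the progressive shrinking of the angular parameter $t$ and choosing $d$ so that the cumulative contraction stays below $\Delta t$ while preserving well-definedness of each $f_k$ on the images $F_{k-1}(D_t(I_1))$.

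To fit the setting of Lemma~\ref{Generalized_S_lemma}, whose source and target intervals coincide, I would precompose each $f_k$ with the real-symmetric affine bijection $\psi_k\colon\mathbb{C}(I_{k+1})\to\mathbb{C}(I_k)$ sending $I_{k+1}$ onto $I_k$. Since affine maps preserve the angular data defining $D_t$, one has $\psi_k(D_t(I_{k+1}))=D_t(I_k)$, and the composition $\psi_k\circ f_k\colon\mathbb{D}(I_k)\to\mathbb{C}(I_k)$ satisfies the hypothesis of Lemma~\ref{Generalized_S_lemma}. Undoing $\psi_k$ yields, for any $t$ with $D_t(I_k)\subset\mathbb{D}(I_k)$,
$$
f_k(D_t(I_k))\subset D_{\tilde t}(I_{k+1}),\qquad \tilde t=\frac{t^2-a_k^2}{t(1+a_k^2)},\quad a_k:=|I_k|/2.
$$
Setting $t_0:=t$ and, inductively, $t_k$ equal to the $\tilde t$ obtained from $t_{k-1}$ at step $k$, an induction on $k$ then gives $F_k(D_t(I_1))\subset D_{t_k}(I_{k+1})$, provided the condition $t_{k-1}>a_k$ is maintained throughout.

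The quantitative heart of the proof is the estimate $t_{k-1}^2-t_k^2\le Ca_k^2$ for an absolute constant $C$ (in fact $C=4$ works whenever $a_k\le t_{k-1}$), proved by a direct algebraic manipulation of the explicit formula for $\tilde t$. Telescoping and using $\sum_k a_k^2\le(\max_k a_k)(\sum_k a_k)\le(d/2)(L/2)$ yields
$$
t^2-t_n^2\le \frac{CdL}{4}.
$$
Because $t>\Delta t$ implies $t^2-(t-\Delta t)^2=\Delta t(2t-\Delta t)>(\Delta t)^2$, the choice $d<4(\Delta t)^2/(CL)$ forces $t_n^2>(t-\Delta t)^2$, hence $t_n>t-\Delta t$, which is the inclusion claimed in the lemma.

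The point at which I expect the argument to need the most care is the well-definedness condition $t_{k-1}>a_k$ required to invoke Lemma~\ref{Generalized_S_lemma} at each step. It is delicate precisely when $t$ is close to $\Delta t$, for then $t_n$ itself is small while $a_k$ is only known to satisfy $a_k\le d/2$. To handle this I would shrink $d$ slightly further so that the uniform bound $t_{k-1}^2\ge t^2-CdL/4$ stays strictly above $(d/2)^2$ for every $t\in(\Delta t,1)$, which is a mild strengthening of $d<4(\Delta t)^2/(CL)$. An equivalent, perhaps cleaner, route is to recast the whole argument in terms of hyperbolic balls in $\mathbb{D}(I_k)$ rather than $\mathbb{C}(I_k)$: each such ball automatically lies inside $\mathbb{D}(I_k)$, so well-definedness is free, and the only price is a comparison of the two hyperbolic metrics, which is again $O(a_k^2)$ per step thanks to the smallness of $a_k$.
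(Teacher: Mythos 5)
Your proposal is correct and uses the same backbone as the paper's proof: iterate Lemma~\ref{Generalized_S_lemma} (after the innocuous affine normalization identifying source and target intervals, which the paper leaves implicit) to obtain the recursion $t_{k+1}=\frac{t_k^2-a_k^2}{t_k(1+a_k^2)}$ with $a_k=|I_k|/2$, and combine the two constraints $\sum a_k\le L/2$ and $\max a_k\le d/2$ to keep the cumulative loss below $\Delta t$. The quantitative estimate is genuinely different, however. The paper tracks a multiplicative bound $t_{k+1}\ge t_1\prod_{j\le k}\bigl(1-a_j^{1+\delta}\bigr)$ for an auxiliary exponent $\delta\in(0,1)$, and in the per-step inequality~(\ref{two_to_delta_ineq}) it must replace the running $t_k$ by the worst-case lower bound $K_d\Delta t$; this makes $d$ and the contraction factor $K_d$ mutually dependent, which has to be untangled by a limiting argument as $d\to 0$. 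Your telescoping of squares $t_{k-1}^2-t_k^2\le 4a_k^2$ sidesteps all of this: the per-step loss is bounded with an absolute constant, independent of how small $t_{k-1}$ is (the computation does use $t_{k-1}<1$ in addition to $a_k<t_{k-1}$, but this is automatic because the recursion is decreasing from $t_0=t<1$), so one simply chooses $d$ against the fixed inequalities $CdL/4<(\Delta t)^2$ and $d^2/4+CdL/4<(\Delta t)^2$ with no auxiliary $\delta$ and no circularity. Both arguments hinge on maintaining the well-definedness condition $a_k<t_{k-1}$ along the induction, and you correctly flag this as the delicate point when $t$ is close to $\Delta t$; the paper enforces it through $a_k\le d/2<K_d\Delta t$, and you enforce it through the parallel bound $a_k^2\le d^2/4<(\Delta t)^2-CdL/4\le t_{k-1}^2$.
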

\begin{proof}
Fix a positive real constant $\delta<1$.
First we observe that for any sufficiently small $d>0$, there exists a constant $0<K_d<1$, such that $K_d\to 1$ as $d\to 0$, and if intervals $I_1,\dots I_n$ satisfy~(\ref{Interval_condidtions}), then
\begin{equation}\label{decrease_proportion_ineq}
\prod_{j=1}^n \left(1-\left(\frac{|I_j|}{2}\right)^{1+\delta}\right)\ge K_d.
\end{equation}
Indeed, for all sufficiently small $d>0$ we have
\begin{multline*}
\log \left[ \prod_{j=1}^n \left(1-\left(\frac{|I_j|}{2}\right)^{1+\delta}\right) \right]
=\sum_{j=1}^n \log \left(1-\left(\frac{|I_j|}{2}\right)^{1+\delta}\right) \ge \\
\ge -2\sum_{j=1}^n \left(\frac{|I_j|}{2}\right)^{1+\delta}
\ge -2^{-\delta} d^\delta \sum_{j=1}^n |I_j|
\ge -2^{-\delta} d^\delta L,
\end{multline*}
and $2^{-\delta} d^\delta L \to 0$ as $d\to 0$, which implies~(\ref{decrease_proportion_ineq}).

Next, we observe that for any sufficiently small $d>0$, the condition $|I_j|\le d$ implies
\begin{multline}\label{two_to_delta_ineq}
\left(1-\left(\frac{|I_j|}{2}\right)^{2}\right)\left(1-\left(\frac{|I_j|}{2K_d\Delta t}\right)^{2}\right) \ge \\
\ge 1 - \left(\frac{|I_j|}{2}\right)^{2}\left( 1+\frac{1}{K_d^2\Delta t^2}\right)
\ge 1 - \left(\frac{|I_j|}{2}\right)^{1+\delta}.
\end{multline}

Finally, let $t_1=t$, and for $1\le k\le n$, we define
\begin{equation}\label{t_k_eq}
t_{k+1}=\frac{t_k^2-|I_k|^2/4}{t_k(1+|I_k|^2/4)}.
\end{equation}
According to Lemma~\ref{Generalized_S_lemma}, if $t_j>0$ and $D_{t_j}(I_j)\subset\bbD(I_j)$, for all $j=1,\dots, k$, then the composition $F_k$ is defined over $D_{t_1}(I_1)$, and $F_k(D_{t_1}(I_1))\subset D_{t_{k+1}}(I_{k+1})$. Thus, in order to prove the lemma, it is enough to show that for all sufficiently small $d>0$, the conditions~(\ref{Interval_condidtions}) imply that
$$
t_{k+1}\ge K_d t_1, \qquad\text{for all }k=1,\dots,n.
$$

Under the same conditions we will use induction to prove a stronger inequality
$$
t_{k+1}\ge t_1 \prod_{j=1}^k \left(1-\left(\frac{|I_j|}{2}\right)^{1+\delta}\right).
$$

\textit{Base case}: it follows from~(\ref{t_k_eq}),~(\ref{two_to_delta_ineq}) and the condition $t_1>\Delta t$ that 
\begin{multline*}
t_{2} =\frac{t_1-|I_1|^2/(4t_1)}{(1+|I_1|^2/4)} \ge t_1\left(1-\left(\frac{|I_1|}{2}\right)^2\right) \left(1-\left(\frac{|I_1|}{2t_1}\right)^2\right) \ge \\
\ge t_1\left( 1 - \left(\frac{|I_1|}{2}\right)^{1+\delta} \right). 
\end{multline*}

\textit{Induction step}: again, it follows from from~(\ref{t_k_eq}),~(\ref{two_to_delta_ineq}) and the induction hypothesis that
\begin{multline*}
t_{k+1} =\frac{t_k-|I_k|^2/(4t_k)}{(1+|I_k|^2/4)} \ge t_k\left(1-\left(\frac{|I_k|}{2}\right)^2\right) \left(1-\left(\frac{|I_k|}{2t_k}\right)^2\right) \ge \\
\ge t_k\left(1-\left(\frac{|I_k|}{2}\right)^2\right) \left(1-\left(\frac{|I_k|}{2K_d\Delta t}\right)^2\right) \ge \\
\ge t_k\left( 1 - \left(\frac{|I_k|}{2}\right)^{1+\delta} \right) \ge
t_1 \prod_{j=1}^k \left(1-\left(\frac{|I_j|}{2}\right)^{1+\delta}\right).
\end{multline*}
\end{proof}

Let $\phi_\alpha\colon\bbC\setminus\bbR_{<0}\to\bbC$ be the branch of the map $z\mapsto z^{1/\alpha}$ preserving $\bbR_{>0}$. Fix a positive real number $\sigma<\cot(\frac{\pi}{2\alpha})$. The following lemma is analogous to Lemma~2.2 from~\cite{Ya1}. The proof is left to the reader.

\begin{lemma}[\textbf{Root of degree $\alpha$}]\label{root_lemma}
	Let $K>0$ and $M\in(0,1)$ be positive real numbers.
	Then for any real numbers $a,t, c\in\bbR$, such that $t>0$, $a\in(0,K)$, and $c\in[0,M]$, 
	there exists a real number $\tl t=\tl t(K,M,t,\alpha)>0$, such that 
	$$
	\phi_\alpha(D_t((-a,1))\setminus (-a,0]) \subset D_\sigma((0,c)) \cup D_{\tl t}((c,1))\subset D_{\tl t}((0,1)).
	$$
\end{lemma}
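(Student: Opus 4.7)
My plan is to combine a direct geometric estimate near the branch point with a compactness argument away from it, exploiting the equivalent form of the hypothesis: $\sigma < \cot(\pi/(2\alpha))$ is equivalent to $2\arctan\sigma < \pi - \pi/\alpha$, which yields a strict gap I use throughout. By real-symmetry it suffices to work in the upper half plane, so I take $z \in D_t((-a,1))\cap\bbH$ and write $w := \phi_\alpha(z)$, which lies in the sector $\Sigma_+ := \{w \in \bbH : 0 < \arg w < \pi/\alpha\}$. The set $D_t((-a,1))$ is contained in a disk of radius depending only on $K$ and $t$, so the image $V := \phi_\alpha(D_t((-a,1))\setminus(-a,0])$ is a bounded subset of the full sector $\Sigma$.

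First I would handle the region near the branch point. A direct computation for $w = re^{i\phi} \in \Sigma_+$ gives
$$
\Theta_{(0, c)}(w) \;=\; \arg\!\left(1 - \tfrac{c}{r}e^{-i\phi}\right),
$$
where $\Theta_{(0,c)}(w)$ denotes the viewing angle of $(0,c)$ from $w$. This tends to $\pi - \phi \ge \pi - \pi/\alpha$ as $r \to 0$. Since the viewing angle is scale-invariant (depends only on $w/c$), the strict gap $\pi - \pi/\alpha > 2\arctan\sigma$ together with continuity produces a constant $\rho_0 = \rho_0(\sigma, \alpha) > 0$ with
$$
\{w \in \Sigma_+ : |w|/c \le \rho_0\} \;\subset\; D_\sigma((0, c)) \qquad\text{for every } c \in (0, M];
$$
the case $c = 0$ is vacuous.

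Next I would show that, for a suitable $\tilde t = \tilde t(K, M, t, \alpha) > 0$ with $\tilde t \le \sigma$,
$$
V \setminus D_\sigma((0, c)) \;\subset\; D_{\tilde t}((c, 1)) \qquad\text{for every } c \in [0, M].
$$
The strategy is compactness: on the compact set $\mathcal K := \{(w, a, c) : c \in [0, M],\, a \in [0, K],\, w \in \overline V \setminus D_\sigma((0, c))\}$, the continuous function $\Theta_{(c,1)}(w)$ should admit a strictly positive infimum $\Theta_0$, which I verify by checking the three limiting behaviors:
\begin{enumerate}[(a)]
	\item $w \to 1$: local conformality of $\phi_\alpha$ at $z = 1$ ensures $\arg(w - 1) \ge 2\arctan t$ for $w \in V$ near $1$, so $\Theta_{(c,1)}(w) \to \arg(w - 1) \ge 2\arctan t$;
	\item $w \to c$ for $c > 0$: $V$ is a full neighborhood of $c$ by conformality of $\phi_\alpha$ at $z = c^\alpha$; the exclusion $w \notin D_\sigma((0,c))$ forces $\arg(w - c) \le 2\arctan\sigma$ in the limit, so $\Theta_{(c,1)}(w) \to \pi - \arg(w - c) \ge \pi - 2\arctan\sigma > 0$;
	\item $w \to 0$: when $c > 0$ the first step ensures $w \in D_\sigma((0,c))$ as soon as $|w| \le \rho_0 c$, keeping $w$ bounded away from $0$ in $\mathcal K$; when $c = 0$, $\Theta_{(0,1)}(w) \to \pi - \arg w \ge \pi - \pi/\alpha > 0$.
\end{enumerate}
Setting $\tilde t := \min(\sigma, \tan(\Theta_0/2))$ then yields the asserted inclusion.

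Combining the two steps gives $V \subset D_\sigma((0, c)) \cup D_{\tilde t}((c, 1))$; the final inclusion into $D_{\tilde t}((0, 1))$ is immediate from the monotonicity of viewing angles under enlargement of the target interval (using $\tilde t \le \sigma$). The hardest part will be case (b): it crucially relies on the hypothesis $\sigma < \cot(\pi/(2\alpha))$, through the derived inequality $\pi - 2\arctan\sigma > \pi/\alpha > 0$, to ensure that $D_\sigma((0, c))$ and $D_{\tilde t}((c, 1))$ together cover a full $\bbH$-neighborhood of $w = c$ (their union closes the \emph{pinch} at the critical value precisely when $\sigma\tilde t < 1$), which is what makes the decomposition possible.
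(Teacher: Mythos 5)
The paper leaves this proof to the reader (by analogy with Lemma~2.2 of~\cite{Ya1}), so I evaluate your sketch directly: the two-step strategy is sound, Step~1 is correct, and cases~(a) and~(b) are fine, but case~(c) of the compactness step has a genuine gap. The compact set $\mathcal K$ you define contains $(0,a,c)$ for every $c\ge 0$, since $0\in\overline{V(a)}$ and the endpoint $0$ of $(0,c)$ does not lie in $D_\sigma((0,c))$; and $\Theta_{(c,1)}(0)=0$, so taken literally the continuous-function-on-a-compact-set argument gives $\Theta_0=0$. The substantive issue behind this is that $\Theta_{(c,1)}(w)$ is discontinuous at the corner $(w,c)=(0,0)$: its limit there depends on $|w|/c$ and on $\arg w$. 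Your sub-case ``$c>0$'' only excludes $|w|\le\rho_0 c$, a radius that vanishes as $c\to 0$, so it does not rule out a sequence with $w_n\to 0$, $c_n\to 0$, $|w_n|>\rho_0 c_n$; and your sub-case ``$c=0$'' bounds $\Theta_{(0,1)}$, which is not the relevant quantity $\Theta_{(c_n,1)}(w_n)$ along such a sequence.

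The fix --- which also relocates where the hypothesis on $\sigma$ is truly needed --- is the additivity of viewing angles of abutting intervals: for $w\in\bbH$,
$$
\Theta_{(c,1)}(w)=\Theta_{(0,1)}(w)-\Theta_{(0,c)}(w).
$$
For $w\in V\cap\Sigma_+$ near $0$ one has $\Theta_{(0,1)}(w)=\pi-\arg w+o(1)>\pi-\pi/\alpha+o(1)$ uniformly, while $w\notin D_\sigma((0,c))$ gives $\Theta_{(0,c)}(w)<2\arctan\sigma$, so
$$
\Theta_{(c,1)}(w)>\pi-\frac{\pi}{\alpha}-2\arctan\sigma+o(1),
$$
which is uniformly positive precisely because $\sigma<\cot(\pi/(2\alpha))$. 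This also corrects your closing remark: case~(b) only uses the trivial $\pi-2\arctan\sigma>0$, so it does not ``crucially'' rely on the hypothesis on $\sigma$; the sharper inequality $2\arctan\sigma<\pi-\pi/\alpha$ is what makes Step~1 and the corner $(w,c)=(0,0)$ work.
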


Lemma~\ref{root_lemma} can also be reformulated for maps that are distorted roots of degree~$\alpha$, provided that there is some control of the distortion. A precise statement is given in the next lemma.
\begin{definition}
	Let $D\subset\bbC$ be a real-symmetric domain containing the interval $[0,1]$. For any real $\mu>0$, we will say that a function $f\colon D\setminus\bbR_{<0}\to \bbC$ is a distorted root of degree $\alpha$ on $D$ with modulus $\mu$, if $f(1)=1$ and $f$ can be represented as
	$$
	f=g\circ\phi_\alpha\circ h,
	$$
	where $h\colon D\to\bbC$ and $g\colon \phi_\alpha(h(D)) \to \bbC$ are conformal maps that fix the origin and can be extended to conformal maps of some domains $U$ and $V$ respectively, such that $\mod(D, U)\ge \mu$ and $\mod(\phi_\alpha(h(D)), V)\ge \mu$.
\end{definition}

\begin{lemma}[\textbf{Distorted root of degree $\alpha$}]\label{root_lemma_bounded_distortion}
	Let $\mu, K>0$ and $M\in(0,1)$ be positive real numbers.
	Then for any real numbers $a,t, c\in\bbR$, such that $t>0$, $a\in(0,K)$, and $c\in[0,M]$, 
	there exists a real number $\tl t=\tl t(\mu, K,M,t,\alpha)>0$, such that for any map $f\colon D_t((-a,1))\setminus (-a,0] \to \bbC$ that is a distorted root of degree $\alpha$ on $D_t((-a,1))$ with modulus $\mu$, we have the inclusion
	\begin{equation}\label{D_t_inclusion_equation}
	f(D_t((-a,1))\setminus (-a,0]) \subset D_\sigma((0,c)) \cup D_{\tl t}((c,1))\subset D_{\tl t}((0,1)).
	\end{equation}
\end{lemma}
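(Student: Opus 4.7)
The plan is to reduce the statement to the pure root case, Lemma~\ref{root_lemma}, by treating the conformal maps $h$ and $g$ as controlled perturbations of affine maps, with the perturbation size governed by the modulus bound $\mu$.

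First, the modulus conditions $\mod(D, U) \ge \mu$ and $\mod(\phi_\alpha(h(D)), V) \ge \mu$, together with the normalizations $h(0) = 0$, $g(0) = 0$, and $f(1) = 1$, imply via the Koebe distortion theorem that $h$ and $g$ are quantitatively close to affine maps on the relevant compact subsets. Concretely, the normalized map $\tilde h := h/h(1)$ carries $(-a, 1)$ onto an interval $(-a_1, 1)$ with $a_1 \in (0, K_1]$ for some $K_1 = K_1(\mu, K)$, and by a Schwarz-type argument combined with the Koebe estimate (c.f.\ Lemma~\ref{Schwarz_lemma}), the image $\tilde h(D_t((-a, 1)))$ is contained in $D_{t_1}((-a_1, 1))$ for some $t_1 = t_1(\mu, K, t) > 0$. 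An analogous control holds for the normalized post-composition $\tilde g$ in the final stage.

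Next, applying Lemma~\ref{root_lemma} with parameters $(K_1, M_1, t_1, \alpha)$ and a suitable $c_1 \in [0, M_1]$ (chosen as the preimage of $c$ under the affine approximation of $\tilde g$, with $M_1$ slightly larger than $M$ to absorb the forthcoming distortion), one obtains
$$
\phi_\alpha\bigl(\tilde h(D_t((-a,1))) \setminus (-a_1, 0]\bigr) \subset D_\sigma((0, c_1)) \cup D_{t_2}((c_1, 1))
$$
for some $t_2 = t_2(\mu, K, M, t, \alpha)$. Composing with $\tilde g$, which is close to the identity by Koebe, and using the normalization $f(1) = 1$ to fix the scalings, produces the desired inclusion into $D_\sigma((0, c)) \cup D_{\tilde t}((c, 1))$ for some $\tilde t = \tilde t(\mu, K, M, t, \alpha) > 0$.

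The main obstacle is the preservation of the angular component $D_\sigma((0, c))$ through the perturbation by $\tilde g$. The constant $\sigma < \cot(\pi/(2\alpha))$ encodes the fact that $\phi_\alpha$ maps sectors of opening $\pi$ at $0$ to sectors of opening $\pi/\alpha$, and this angular restriction must survive the subsequent conformal distortion. Since there is a definite gap between $2\arctan(\sigma)$ and the maximal angle $\pi - \pi/\alpha$ produced by $\phi_\alpha$, there is room for a bounded conformal perturbation; converting this into a quantitative statement requires a version of the Koebe estimate that controls angles at the fixed point $0$, rather than merely Euclidean distances, and this is the delicate technical step of the argument.
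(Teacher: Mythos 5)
Your approach and the paper's differ fundamentally. You attempt to reduce to Lemma~\ref{root_lemma} by tracking the distortion of $h$ and $g$ explicitly through Koebe estimates, normalizing the pre- and post-composing conformal maps and propagating bounds step by step. The paper instead proceeds by soft compactness: it observes that for each individual distorted root $f$ the inclusion~(\ref{D_t_inclusion_equation}) holds for some $\tilde t > 0$ (because near the origin the image of the slit domain lies in a sector of opening $\pi/\alpha < \pi - 2\arctan(\sigma)$, and away from the origin the image is bounded and bounded away from the endpoints), that $\tilde t$ can be chosen continuously in $f$, and that the family of distorted roots with modulus $\mu$ is normal by Koebe; the uniform $\tilde t$ then falls out of compactness without any explicit distortion bookkeeping.

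The concrete gap in your proposal is the one you yourself flag in the last paragraph: you never establish that the post-composition $\tilde g$ preserves the sector-type region $D_\sigma((0,c))$. This is not a cosmetic point. The set $D_\sigma((0,c))$ has opening angle $\pi - 2\arctan(\sigma)$ at the vertex $0$, and while the image under $\phi_\alpha$ lives in a strictly smaller sector of opening $\pi/\alpha$, a conformal $\tilde g$ fixing $0$ does \emph{not} map sectors into sectors except asymptotically at the vertex. The angular margin $\pi - 2\arctan(\sigma) - \pi/\alpha > 0$ gives you room near $0$, but at definite scale (comparable to $c$) the Koebe argument controls the modulus of $\tilde g(z)/z$, not its argument, unless you invoke a more refined estimate; and because $D_\sigma((0,c))$ tapers toward the endpoint $c$, the available angular margin shrinks precisely where the Euclidean Koebe bound is weakest. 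So the step ``composing with $\tilde g$, which is close to the identity by Koebe, produces the desired inclusion'' is asserted, not proven, and proving it would require essentially the delicate argument you describe but do not supply. The paper's compactness argument is designed to avoid exactly this obstacle: it never needs a uniform explicit estimate, only the qualitative fact for each fixed $f$ plus normality of the family. Unless you fill in the angular Koebe estimate with a precise statement and proof, your argument does not yet establish the lemma.
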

\begin{proof}
It follows from Lemma~\ref{root_lemma} that for each particular function $f$ as above, one can choose a parameter $\tl t$ that satisfies~(\ref{D_t_inclusion_equation}). Furthermore, the parameter $\tl t$ can be chosen to depend continuously on $f$ in open-compact topology. Finally, according to the Koebe Distortion Theorem, the set of all functions $f$ that are distorted roots of degree $\alpha$ on $D_t((-a,1))$ with modulus $\mu$, is a normal family. Hence by the standard compactness argument, there exists a real number $\tl t$ that satisfies~(\ref{D_t_inclusion_equation}), simultaneously for all maps $f$ as above.
\end{proof}

\section{Dynamical intervals}\label{dyn_intervals_sec}

\subsection{Prerenormalization}
If $f\in\mathfrak L$ is an $n$ times renormalizable Lorenz map, then $\cR^n f$ can be represented as a rescaled first return map of $f$ to some closed interval $C_n$, such that $c\in C_n$. We denote this first return map by $p\cR^n f$ -- the $n$-th prerenormalization of $f$. The critical point $c$ splits $C_n$ into two 
intervals 
$$
C_{n-}=\{x\in C_n\mid x< c\}\qquad\text{and}\qquad C_{n+}=\{x\in C_n\mid x> c\}. 
$$
These intervals will be called the \textit{left} and \textit{right domains of the $n$-th prerenormalization} $p\cR^n f$. 
The restrictions of $p\cR^n f$ to $C_{n-}$ and $C_{n+}$ will be denoted by $p\cR^n f_-$ and $p\cR^n f_+$ respectively. Each of them is an analytic homeomorphisms of $C_{n-}$ and $C_{n+}$ respectively and can be represented as 
$$
p\cR^n f_-=f^{m_n^-},\qquad\text{and}\qquad p\cR^n f_+=f^{m_n^+},
$$
for some positive integers $m_n^-,m_n^+\in\bbN$. Each $f$ in these compositions is either the left or the right branch of $f$. 
Further we will always assume that the maps $p\cR^n f_-$ and $p\cR^n f_+$ are defined on the closures of the intervals $C_{n-}$ and $C_{n+}$ respectively by continuous extensions:
$$
p\cR^n f_-(c):= \lim_{x\to c-} p\cR^n f_-(x) \qquad\text{and}\qquad p\cR^n f_+(c):= \lim_{x\to c+} p\cR^n f_+(x).
$$

\subsection{Combinatorial properties of renormalizations}
In this subsection we prove some combinatorial properties of renormalizable Lorenz maps.
We will state these properties for the class of analytic Lorenz maps from $\mathfrak L$, however the statements and their proofs remain the same if instead of the maps from $\mathfrak L$ one considers the so-called \textit{topological Lorenz maps}, i.e., the Lorenz maps $f=(f_-,f_+)$, where $f_-$ and $f_+$ instead of being restrictions of analytic unimodal maps, are assumed to be just homeomorphisms to their images.

\subsubsection{Homeomorphic extensions of prerenormalizations}
Let $f\in\mathbf L$ be an $n$-times renormalizable Lorenz map. For each $k=1,\dots,n$, let $L_{k-},L_{k+}\subset (0,1)$ be the maximal open intervals, such that $C_{k\pm}\subset L_{k\pm}$, and $p\cR^k f_\pm$, viewed as an appropriate composition of the maps $f_\pm$, is a homeomorphism from $L_{k\pm}$ onto its image. We note that since $p\cR^k f_\pm$ is the first return map of $C_{k\pm}$ to $C_k$, it follows that the sets $C_{k\mp}\cap L_{k\pm}$ have no interior points, hence $C_{k\pm}\neq L_{k\pm}$. We define $L_k=L_{k-}\cup L_{k+}\cup\{c\}$.

\begin{lemma}\label{L_n_combinatorial_lemma}
Assume that for some $n\in\bbN$, a map $f\in\mathfrak L$ is $(n+1)$-times renormalizable. Then
$$
L_{n\pm}\Subset p\cR^nf_\pm(L_{n\pm}).
$$
Furthermore, if $A_{n\pm}$ and $B_{n\pm}$ are two connected components of $p\cR^nf_\pm(L_{n\pm})\setminus L_{n\pm}$, such that $c\in\partial A_{n\pm}$, then
$$
C_{(n+1)\mp}\subset A_{n\pm} 
$$
and $B_{n\pm}$ contains an interval from the finite orbit of $C_{(n+1)\mp}$ under the map $p\cR^{n-1}f$ before its first return to $C_n$.
\end{lemma}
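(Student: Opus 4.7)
I handle the minus case; the plus case follows by a fully symmetric argument. Let $F := p\cR^{n-1} f$, so that $p\cR^n f_-$ coincides on $C_{n-}$ with the composition $F^{k_-}$ along a fixed itinerary $(j_1, \ldots, j_{k_-})$ of branches of $F$, where $k_-$ is the first return time of $C_{n-}$ to $C_n$ under $F$. Each branch of $F$ is a monotone-increasing homeomorphism (it fixes the corresponding endpoint of $C_{n-1}$ and has a vanishing derivative only at $c$), so the composition $F^{k_-}$ is monotone increasing on $L_{n-}$. Maximality of $L_{n-}$ forces its right endpoint to be $c$ and its left endpoint $p_{n-}$ to satisfy $F^j(p_{n-}) = c$ for a unique $j \in \{1, \ldots, k_- - 1\}$. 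Monotonicity places $F^j(L_{n-}) = (c, F^j(c^-))$ strictly to the right of $c$, so the $(j+1)$-st branch used is $F_+$, giving
\begin{equation*}
p\cR^n f_-(p_{n-}^+) = F^{k_- - j - 1}(F_+(c)).
\end{equation*}

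For the inclusion $C_{(n+1)+} \subset A_{n-}$: weak nontriviality of $\cR^n f$ (guaranteed by $(n+1)$-renormalizability) gives $p\cR^n f_-(c^-) > c$, and the defining property $C_{n+1} \subset (p\cR^n f_+(c^+), p\cR^n f_-(c^-))$ forces $b_{n+1} < p\cR^n f_-(c^-)$; hence $C_{(n+1)+} = [c, b_{n+1}] \subset [c, p\cR^n f_-(c^-)) = A_{n-}$. For compact containment on the left, the inequality $p_{n-} < a_n$ follows from the fact that $a_n$ is a periodic point of $F$ of period $k_-$: its $k_-$ distinct iterates all avoid $c$, so $F^{k_-}$ extends as a local homeomorphism across $a_n$.

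To locate the orbit interval in $B_{n-}$, observe that $F(C_{(n+1)+}) = F_+(C_{(n+1)+}) = [F_+(c), F_+(b_{n+1})]$; the tail itinerary $(j_{j+2}, \ldots, j_{k_-})$ coincides with the itinerary followed by the $F$-orbit of $F_+(c)$ before its first return to $C_n$, since both orbits start at $F_+(c)$. Applying the corresponding homeomorphism yields
\begin{equation*}
J := F^{k_- - j}(C_{(n+1)+}) = \bigl[p\cR^n f_-(p_{n-}^+),\, r\bigr], \qquad r := F^{k_- - j - 1}(F_+(b_{n+1})).
\end{equation*}
Since $p\cR^n f_-(p_{n-}^+) < a_n$ lies outside $C_n$, the iterate $J$ has not yet returned to $C_n$, so $k_- - j \le k_+ - 1$ and $J$ is a legitimate element of the finite orbit of $C_{(n+1)+}$ before its first return to $C_n$. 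The remaining step --- which I expect to be the main obstacle --- is to show $r \le p_{n-}$, giving simultaneously $J \subset B_{n-}$ and the strict compact-containment inequality $p\cR^n f_-(p_{n-}^+) < p_{n-}$. My plan is to exploit the Markov partition of $C_{n-1}$ by the escape intervals $\{F^i(C_{n\mp})\}$ together with $C_n$: the interval $J$ lies inside the escape piece $F^{k_- - j}(C_{n+})$, and a combinatorial analysis --- matching up the orbit of $C_{n+}$ with the orbit of preimages of $c$ that defines $p_{n-}$ --- should pin down this escape interval as the one immediately adjacent to $p_{n-}$ on the left. Verifying this pairing between the two itineraries is where I expect the combinatorial core of the argument to lie.
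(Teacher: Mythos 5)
Your write‑up is explicitly incomplete: you stop before the key inequality $r\le p_{n-}$, which you yourself flag as ``the main obstacle,'' and the plan you sketch (a Markov‑partition / itinerary‑matching argument) is not carried out. Since both $J\subset B_{n-}$ and the strict left‑hand compact containment $p\cR^nf_-(p_{n-}^+)<p_{n-}$ hinge on that step, the lemma is not proved.

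Beyond the acknowledged gap, there is a smaller one earlier in the argument. You assert that the tail itinerary $(j_{j+2},\dots,j_{k_-})$ ``coincides with the itinerary followed by the $F$-orbit of $F_+(c)$ before its first return to $C_n$, since both orbits start at $F_+(c)$.'' Sharing an initial point does not by itself synchronize the itineraries of the two \emph{intervals} $F^{j+1}(L_{n-})=(F_+(c),F^{j+1}(c^-))$ and $F(C_{(n+1)+})=(F_+(c),F_+(b_{n+1}))$: for that you need the second interval's forward orbit to stay on the same side of $c$ at each step, which would follow for instance from $F^i(C_{(n+1)+})\subset F^{j+i}(L_{n-})$ for $1\le i\le k_--j-1$, i.e.\ from $F_+(b_{n+1})\le F^{j+1}(c^-)$. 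That containment is not immediate from $C_{(n+1)+}\subset A_{n-}=(c,F^{k_-}(c^-))$, since $F^j(c^-)$ and $F^{k_-}(c^-)$ are different points. Without it you cannot legitimately push $[F_+(c),F_+(b_{n+1})]$ forward by the homeomorphic composition $F^{k_--j-1}$ and obtain the formula for $J$.

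Your overall framework (tracking $p_{n-}$ through the composition, identifying the time $j$ with $F^j(p_{n-})=c$, computing $p\cR^nf_-(p_{n-}^+)=F^{k_--j-1}(F_+(c))$, and observing $p_{n-}<a_n$) is consistent with the setup the paper uses, and the candidate interval $J=F^{k_--j}(C_{(n+1)+})$ is the right one — in the paper's notation this is $(p\cR^{n-1}f)^{\circ k_\pm}(C_{(n+1)\mp})$. But the paper closes the argument by a different, more abstract route: it introduces the first critical image $Y_\pm$ of $X_\pm=L_{n\pm}\setminus\overline{C_{n\pm}}$, proves $C_{n\mp}\subset Y_\pm$, sets up three integers ($k_\pm$: iterates bringing $Y_\pm$ into $p\cR^nf_\pm(L_{n\pm})$; $m_\pm$: return time of $C_{n\mp}$; $s_\pm$: iterates with $(p\cR^{n-1}f)^{\circ s_\pm}(X_\pm)=Y_\pm$), derives the inequalities $m_\pm>k_\pm$ and $m_\pm-k_\pm\le s_\pm$, and then uses nontriviality of $p\cR^nf$ together with the fact that $\overline{p\cR^nf(C_{(n+1)\mp})}$ avoids $c$ to conclude $B_{n\pm}$ is nonempty and contains $(p\cR^{n-1}f)^{\circ k_\pm}(C_{(n+1)\mp})$. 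That counting argument sidesteps exactly the combinatorial pairing you were hoping to extract from a Markov partition. If you want to finish along your own lines, the missing input is precisely the analogue of the paper's inequality $m_\pm-k_\pm\le s_\pm$ together with the nontriviality of $\cR^n f$; without some version of that you cannot rule out $r>p_{n-}$.
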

\begin{proof}
Since the map $p\cR^nf$ is renormalizable, it is a nontrivial Lorenz map, hence $A_{n\pm}$ is a non-degenerate interval, such that $C_{(n+1)\mp}\subset A_{n\pm}$.

Now we will show that $B_{n\pm}$ is nonempty. 
Consider the interval $X_\pm=L_{n\pm}\setminus\overline{C_{n\pm}}$. Since $L_{n\pm}$ is the maximal interval on which $p\cR^nf_\pm$ is a homeomorphism, there exists a homeomorphic image $Y_\pm\subset C_{(n-1)\mp}$ of $X_\pm$ under some iterate of $p\cR^{n-1}f$, such that $c\in\partial Y_\pm$. Then we have $C_{n\mp}\subset Y_\pm$, since otherwise the orbit of $C_{n\pm}$ under the map $p\cR^{n-1}f$ would have common interior points with the interval $C_n$ before returning to $p\cR^nf(C_{n\pm})$. 
If the set $B_{n\pm}$ is empty, then $p\cR^nf_\pm$ maps $X_\pm$ homeomorphically into  itself, hence the iterates of $Y_\pm$ under the dynamics of $p\cR^{n-1}f$ never have the critical point $c$ in their interiors, so $p\cR^{n}f$ is a trivial map and cannot be renormalizable, which is a contradiction. 

Now let $k_\pm, m_\pm\in\bbN$ be such that $(p\cR^{n-1}f)^{\circ k_\pm}$ maps $Y_\pm$ inside $p\cR^nf_\pm(L_{n\pm})$ and $(p\cR^{n-1}f)^{\circ m_\pm}|_{C_{n\mp}}=p\cR^nf_\mp$. First, we observe that since $C_{n\mp}\subset Y_\pm$ and $(p\cR^{n-1}f)^{\circ k_\pm}$ maps $Y_\pm$ homeomorphically onto its image, we have $m_\pm>k_\pm$. 
Next, we notice that if $s_\pm\in\bbN$ is such that $(p\cR^{n-1}f)^{\circ s_\pm}(X_\pm)=Y_\pm$, then $m_\pm-k_\pm\le s_\pm$, since the renormalization $p\cR^nf$ is defined as the first return map to $C_n$. Now we observe that $B_{n\pm}\subset (p\cR^{n-1}f)^{\circ k_\pm}(Y_\pm)$, and since $p\cR^nf$ is a nontrivial Lorenz map, the interval $(p\cR^{n-1}f)^{\circ (m_\pm-k_\pm)}(B_{n\pm})$ contains the critical point $c$ either on the boundary (if $s_\pm=m_\pm-k_\pm$) or in the interior (if $s_\pm<m_\pm-k_\pm$).

Finally, the closed interval $\overline{p\cR^nf(C_{(n+1)_\mp})}$ does not contain the critical point $c$, hence $(p\cR^{n-1}f)^{\circ k_\pm}(C_{(n+1)_\mp})\subset B_{n\pm}$, which completes the proof.
\end{proof}

\subsubsection{Homeomorphic extensions of $f_\pm\circ(p\cR^nf_\pm)^{-1}$}

Let $f\in\mathfrak L$ be an $n$-times renormalizable Lorenz map. We observe that the map $f_\pm\circ (p\cR^n f_\pm)^{-1}$ on the interval $p\cR^n f_\pm(C_{n\pm})$ can be represented as the composition of $m_n^\pm-1$ inverse maps $f_-^{-1}$ or $f_+^{-1}$. The choice and the order of these inverse maps in the composition depends on the combinatorics of the Lorenz map $f$. 

\begin{remark}
  In the remaining part of the paper we identify $f_\pm\circ (p\cR^n f_\pm)^{-1}$ with this composition.
\end{remark}

We note that this composition is defined and homeomorphic on some maximal interval that contains $p\cR^n f_\pm(C_{n\pm})$. The goal of this subsection is to study the properties of this maximal interval.

Consider the finite orbit 
of the interval $C_{n-}$ under the dynamics of $p\cR^{n-1}f$ before its first return to $C_n$. Let $S_{n-}\subset (0,1)$ be the interval from this orbit 
such that $S_{n-}$ lies to the right from the critical point $c$ and is closest to $c$. Similarly, let $S_{n+}\subset (0,1)$ be the interval from the orbit 
of $C_{n+}$ under the dynamics of $p\cR^{n-1}f$ before its first return to $C_n$, such that $S_{n+}$ lies to the left from the critical point $c$ and is closest to it.

\begin{definition}\label{Q_n_def}
We define $Q_{n\pm}$ to be the minimal open interval, containing the intervals $S_{n\pm}$ and $p\cR^nf_\pm(L_{n\pm})$.
\end{definition}

\begin{proposition} 
	Assume, $f\in\mathfrak L$ is an $n$-times renormalizable Lorenz map, for some $n\in\bbN$. 
	Then the map $f_\pm\circ (p\cR^n f_\pm)^{-1}$ is a homeomorphism of $Q_{n\pm}$ onto its image.
\end{proposition}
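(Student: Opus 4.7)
I treat the $+$ case; the $-$ case is analogous with $S_{n-}$ in place of $S_{n+}$ and the roles of left and right of $c$ interchanged. By the Remark preceding the statement, write $H := f_+\circ (p\cR^n f_+)^{-1}$, identified with the composition
$$H = g_2^{-1}\circ g_3^{-1}\circ\cdots\circ g_{m_n^+}^{-1}$$
of $m_n^+ - 1$ inverse branches of $f_\pm$, where $p\cR^n f_+ = g_{m_n^+}\circ\cdots\circ g_1$ with $g_1 = f_+$. First, $H$ is a homeomorphism on $p\cR^n f_+(L_{n+})$: on $L_{n+}$, the composition $p\cR^n f_+ = T\circ f_+$ with $T := g_{m_n^+}\circ\cdots\circ g_2$ is a homeomorphism by the definition of $L_{n+}$, and $f_+$ is a homeomorphism on $L_{n+}$, hence $T$ is a homeomorphism on $f_+(L_{n+})$, whence $H = T^{-1}$ is a homeomorphism on $T(f_+(L_{n+})) = p\cR^n f_+(L_{n+})$.

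It remains to extend $H$ to the portion of $Q_{n+}$ containing $S_{n+}$. Set $P_j := f^j(C_{n+})$ for $j = 1,\ldots,m_n^+$; since the orbit of $C_{n+}$ under $p\cR^{n-1}f$ is a subsequence of the $f$-orbit, $S_{n+} = P_s$ for some $s\in\{1,\ldots,m_n^+-1\}$. For any source interval $Y\supset P_{m_n^+}$, introduce the backward intermediate images $Y_k := g_{m_n^+-k+1}^{-1}\circ\cdots\circ g_{m_n^+}^{-1}(Y)$, which satisfy $Y_k\supset P_{m_n^+-k}$. The composition $H$ is a homeomorphism on $Y$ if and only if each $Y_{k-1}$ stays in the (restricted) half-line image of $g_{m_n^+-k+1}$, equivalently, does not reach the critical value $g_{m_n^+-k+1}(c)$ from the wrong side.

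The central claim is that $Y := Q_{n+}$ satisfies all these constraints. Two combinatorial ingredients enter: (i) any orbit interval $P_j$ lying outside $C_{n-1}$ is bounded away from $c$ by the margin determined by $\bigl(C_{n-1}\setminus C_n\bigr)\cap(\cdot)$ around $c$, and hence does not produce a tight constraint on the extension; (ii) the orbit intervals $P_j\subset C_{n-1}$ are precisely the members of the $p\cR^{n-1}f$-orbit of $C_{n+}$, among which $S_{n+}$ is by construction the one closest to $c$ on the left. Tracking the intermediates $Y_k$ as the source is extended leftward from $p\cR^n f_+(L_{n+})$ toward $S_{n+}$, and using the orientation-preserving property of each branch, one verifies that the tightest constraint among all the $Y_{k-1}$ is realized exactly when $Y$ reaches $S_{n+}$, so the extension to $Q_{n+}$ is uniformly permissible.

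\emph{Main obstacle.} The principal technical difficulty is the last combinatorial verification: showing case by case, depending on each $g_i\in\{f_+,f_-\}$, that as $Y$ is extended from $p\cR^n f_+(L_{n+})$ to $Q_{n+}$ the left endpoint of each intermediate $Y_{k-1}$ stays on the correct side of $g_{m_n^+-k+1}(c)$, with the tightest constraint realized exactly at the intermediate level corresponding to $S_{n+}$. This relies on the structure of the $p\cR^{n-1}f$-orbit of $C_{n+}$ and on the defining "closest-to-$c$" property of $S_{n+}$.
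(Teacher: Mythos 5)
Your proposal correctly identifies the framework: $H = f_+\circ(p\cR^n f_+)^{-1}$ is a composition of $m_n^+-1$ inverse branches, and the task is to show that as the domain $Y$ is widened from $p\cR^n f_+(L_{n+})$ to $Q_{n+}$, none of the intermediate backward images $Y_k$ crosses the critical point $c$. The first half of the argument (the homeomorphism on $p\cR^n f_+(L_{n+})$, directly from the definition of $L_{n+}$) is fine. But the remaining half is not a proof: you explicitly flag the ``central claim'' — that the binding constraint among all the $Y_{k-1}$ is realized precisely when $Y$ reaches $S_{n+}$ — as the ``Main obstacle,'' and you do not establish it. That claim \emph{is} the proposition, restated in your coordinates; the two ``combinatorial ingredients'' you list in support of it are themselves heuristics (ingredient (i), that orbit intervals outside $C_{n-1}$ are ``bounded away from $c$ and hence do not produce a tight constraint,'' is not obviously correct as stated, since whether a pullback $Y_{k}$ crosses $c$ depends on how $Y$ compares to an image of a gap at level $k$, not on a raw distance of $P_j$ to $c$). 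So the proposal reduces the proposition to an unproven combinatorial assertion.

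For comparison, the paper resolves exactly this step by running the argument \emph{forward} rather than backward. It constructs a nested chain $X_2,\dots,X_{m}$ (for the $-$ branch): $X_2$ is the convex hull of $f(C_{n-})$ and $f^2(C_{n-})$, and each $X_{k+1}$ is either $f(X_k)$ or, when $c\in X_k$, $f(\{x\in X_k : x<c\})$. The cutting rule makes the homeomorphism property automatic, because no $X_k$ ever straddles $c$ when the branch is applied. The content is then the inductive invariant: each $X_{k+1}$ contains $f^{k+1}(C_{n-})$ together with at least one earlier orbit interval $f^{l}(C_{n-})$, $l<k+1$, lying to its right. This invariant is what guarantees $Q_{n-}\subset X_m\cup p\cR^n f_-(L_{n-})$, replacing your ``one verifies.'' You would need to supply an argument of comparable precision — either the paper's forward construction with its invariant, or a genuine case-by-case check of your backward constraints — before the proposal can count as a proof.
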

\begin{proof}
We will give a proof in the case of the interval $Q_{n-}$. The proof for the interval $Q_{n+}$ is analogous.

Consider the orbit of the interval $C_{n-}$ under the dynamics of $f$. Let $m\in\bbN$ be such that $f^m(C_{n-})$ is the first return of this orbit to $C_n$. 
Let $X_2,X_3,\dots,X_m\subset (0,1)$ be the family of open intervals, such that $X_2$ is the minimal interval that contains $f(C_{n-})$ and $f^2(C_{n-})$, and for $k=2,\dots,m-1$,
$$
X_{k+1}=\begin{cases}
f(X_k),& \text{ if } c\not\in X_k,\\
f(\{x\in X_k\mid x<c\}),& \text{ if } c\in X_k.
\end{cases}
$$
In both cases $X_{k+1}$ contains the interval $f^{k+1}(C_{n-})$ and at least one other interval $f^{l}(C_{n-})$, for some $l<k+1$, which lies to the right from $f^{k+1}(C_{n-})$. (The proof is by induction: in the first case of the above formula, $f^{l}(C_{n-})$ is the image of the corresponding interval from $X_k$ under the map $f$, and in the second case $f^{l}(C_{n-}) = f(C_{n-})$.) 

Consider an interval $I=X_m\cup p\cR^nf_-(L_{n-})$. It follows from construction of the interval $X_m$ that the map $f_-\circ(p\cR^nf_-)^{-1}$ is a homeomorphism on $I$, and since $Q_{n-}\subset I$, this completes the proof of the proposition.
\end{proof}

\begin{lemma}\label{L_n_in_P_n_lemma}
Assume, $f\in\mathfrak L$ is an $(n+1)$-times renormalizable Lorenz map, for some $n\in\bbN$. Then
\begin{enumerate}[(i)]
\item \label{L_n_in_P_n_item1} the following inclusions hold:
$$
L_{n-}\cup C_{n+}\Subset Q_{n-}\qquad\text{ and}\qquad L_{n+}\cup C_{n-}\Subset Q_{n+};
$$
\item \label{L_n_in_P_n_item1a} each of the two connected components of the sets
$$
Q_{n\pm}\setminus (L_{n\pm} \cup C_{n\mp})
$$
contain either an interval from the finite orbit of $C_{(n+1)\mp}$ or from the finite orbit of $C_{n\pm}$ under the map $p\cR^{n-1}f$ before their first returns to $C_n$.
\end{enumerate}
\end{lemma}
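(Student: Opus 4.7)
My plan is to extract the structural description of $L_{n\pm}$ and $p\cR^n f_\pm(L_{n\pm})$ from Lemma \ref{L_n_combinatorial_lemma}, combine it with the definitions of $S_{n\pm}$ and $Q_{n\pm}$, and then compare endpoints directly. I will focus on the $-$ case, since the $+$ case is symmetric. From Lemma \ref{L_n_combinatorial_lemma}, the two components $A_{n-},B_{n-}$ of $p\cR^nf_-(L_{n-})\setminus L_{n-}$ are both nonempty, and $c\in\partial A_{n-}$. Since $C_{n-}=[c_-,c)\subset L_{n-}$ with $L_{n-}$ open, the only way a component of the complement of $L_{n-}$ inside $p\cR^n f_-(L_{n-})$ can have $c$ on its boundary is if $c$ itself is the right endpoint of $L_{n-}$. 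So I may write $L_{n-}=(\alpha,c)$, with $A_{n-}$ sitting immediately to the right of $c$ (and containing $C_{(n+1)+}$) and $B_{n-}$ sitting immediately to the left of $L_{n-}$ (and containing an interval from the finite $p\cR^{n-1}f$-orbit of $C_{(n+1)+}$).

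Next I would pin down $S_{n-}$ and hence the shape of $Q_{n-}$. By definition, $S_{n-}$ is a member of the $p\cR^{n-1}f$-orbit of $C_{n-}$ prior to the first return to $C_n$, so it has empty interior intersection with $C_n=[c_-,c_+]$; being the closest such interval lying to the right of $c$, its left endpoint satisfies $l(S_{n-})\ge c_+$. Moreover $S_{n-}$ is a homeomorphic image of the nondegenerate interval $C_{n-}$, hence has positive length, which gives $r(S_{n-})>c_+$. Consequently the left endpoint of $Q_{n-}$ equals the left endpoint of $B_{n-}$ (hence is strictly less than $\alpha$), and its right endpoint is $\max(r(A_{n-}),r(S_{n-}))>c_+$. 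Part~\ref{L_n_in_P_n_item1} then follows immediately, since the ``outer'' endpoints $\alpha$ and $c_+$ of $L_{n-}\cup C_{n+}$ lie strictly between the endpoints of $Q_{n-}$.

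For part~\ref{L_n_in_P_n_item1a}, I regard $L_{n-}\cup C_{n+}$ as filling the subinterval of $Q_{n-}$ running from $\alpha$ to $c_+$, so that $Q_{n-}\setminus(L_{n-}\cup C_{n+})$ splits into a left strip between the left endpoint of $Q_{n-}$ and $\alpha$, and a right strip between $c_+$ and the right endpoint of $Q_{n-}$. The left strip contains $B_{n-}$, which by Lemma \ref{L_n_combinatorial_lemma} contains an interval from the $p\cR^{n-1}f$-orbit of $C_{(n+1)+}$, while the right strip contains $S_{n-}$ (possibly up to the single boundary point $c_+$), which is by construction from the $p\cR^{n-1}f$-orbit of $C_{n-}$. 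The hard part will be the endpoint bookkeeping: $L_{n-}$ is open on the right at $c$ while $C_{n+}$ is open on the left at $c$, so $L_{n-}\cup C_{n+}$ literally has a gap at $c$, and the left endpoint of $S_{n-}$ could coincide with $c_+$. Making the ``two connected components'' formulation and the ``contains an interval from the orbit'' assertion consistent with these open/half-open subtleties is the only real work; once this is settled, the endpoint comparison above closes the argument.
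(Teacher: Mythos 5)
Your argument is correct and follows essentially the same route as the paper: you identify the left component of $Q_{n\pm}\setminus(L_{n\pm}\cup C_{n\mp})$ with $B_{n\pm}$ from Lemma~\ref{L_n_combinatorial_lemma} and observe that the right component contains $S_{n\pm}$, which is exactly the observation the paper records. The extra endpoint bookkeeping you flag (the gap at $c$ and the possibility $l(S_{n-})=c_+$) is a genuine but harmless imprecision that the paper also glosses over, and your reading of it matches the intended one.
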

\begin{proof}
	Parts~\ref{L_n_in_P_n_item1} and~\ref{L_n_in_P_n_item1a}  follow immediately from the observation that one component of $Q_{n\pm}\setminus (L_{n\pm} \cup C_{n\mp})$ contains the interval $S_{n\pm}$ and another component is the interval $B_{n\pm}$ from Lemma~\ref{L_n_combinatorial_lemma}, hence contains an interval from the finite orbit of $C_{(n+1)\mp}$ under the map $p\cR^{n-1}f$ before its first return to $C_n$.
\end{proof}

For every integer $n\in\bbN$ and an $n$-times renormalizable Lorenz map $f$, let $\mathcal O_{n\pm}$ be the finite orbit of the interval $f(\overline{C_{n\pm}})$ until its first return to $C_n$ under the dynamics of $f$. That is,
$$
\mathcal O_{n\pm} = \{f(\overline{C_{n\pm}}), f^2(\overline{C_{n\pm}}),\dots, f^{m_n^\pm}(\overline{C_{n\pm}}) \}.
$$
The elements of $\mathcal O_{n\pm}$ are closed intervals that have pairwise disjoint interiors. Similarly, let $\mathcal Q_{n\pm}$ be the finite orbit of the interval $f_\pm\circ(p\cR^nf_\pm)^{-1}(Q_{n\pm})$ under the dynamics of $f$ until it is mapped onto $Q_{n\pm}$. That is,
\begin{multline}\label{Q_n_orbit}
\mathcal Q_{n\pm} = \{f_\pm\circ(p\cR^nf_\pm)^{-1}(Q_{n\pm}),\, f\circ f_\pm\circ(p\cR^nf_\pm)^{-1}(Q_{n\pm}),\\
f^2\circ f_\pm\circ(p\cR^nf_\pm)^{-1}(Q_{n\pm}),\dots, Q_{n\pm}\}.
\end{multline}

By construction, each interval $f^k\circ f_\pm\circ(p\cR^nf_\pm)^{-1}(Q_{n\pm})$ from $\mathcal Q_{n\pm}$ ($k=0,\dots,m_n^\pm$) contains the interval $f^{k+1}(\overline{C_{n\pm}})$ in its interior. We will say that the latter is the \textit{core subinterval} of the former one.

\begin{lemma} \label{Q_n_orbit_lemma}
Assume, $f\in\mathfrak L$ is an $n$-times renormalizable Lorenz map, for some $n\in\bbN$. Then
\begin{enumerate}[(i)]

\item \label{L_n_in_P_n_item1c} every interval from the finite orbit $\mathcal Q_{n\pm}$ does not contain any other intervals from $\mathcal O_{n\pm}$ in its interior except for its core subinterval.

\item \label{L_n_in_P_n_item1b}
furthermore, every point of the interval $(0,1)$ belongs to no more than three intervals from the finite orbit~$\mathcal Q_{n\pm}$.
\end{enumerate}

\end{lemma}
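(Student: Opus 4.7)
The plan is to prove part (i) by reducing to two ``boundary'' cases via the cyclic structure of $\mathcal Q_{n\pm}$, and then to derive part (ii) by a pigeonhole argument.

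For the reduction in part (i), observe that since $f_\pm\circ(p\cR^nf_\pm)^{-1}$ is a homeomorphism on $Q_{n\pm}$, its unfolding as a composition of inverse branches avoids the critical point, so each $I_k\in\mathcal Q_{n\pm}$ lies in the domain of a single branch of $f$. Thus $f$ restricts to a homeomorphism $I_k\to I_{k+1}$ that maps the core $J_k:=f^{k+1}(\overline{C_{n\pm}})$ to $J_{k+1}$, and the relation $J_l\subset\mathrm{int}(I_k)$ is invariant under the simultaneous shift $(k,l)\mapsto(k+s,l+s)$ whenever both shifted indices remain in $\{0,\ldots,m_n^\pm -1\}$. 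By such shifts, any putative violation (some $J_l\subset\mathrm{int}(I_k)$ with $l\neq k$) transports to either a non-core element of $\mathcal O_{n\pm}$ in $\mathrm{int}(Q_{n\pm})$ (when $l<k$) or in $\mathrm{int}(I_0)$ with $I_0:=f_\pm\circ(p\cR^nf_\pm)^{-1}(Q_{n\pm})$ (when $l>k$). It therefore suffices to rule out both boundary cases. For $Q_{n-}$: if $f^j(\overline{C_{n-}})\subset\mathrm{int}(Q_{n-})$ for some $1\leq j<m_n^-$, then the interior of $f^j(C_{n-})$ is disjoint from $\mathrm{int}(C_n)$ (as $m_n^-$ is the first return time), so $f^j(\overline{C_{n-}})$ lies entirely in one of the two extensions of the core outside $\overline{C_n}$. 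The right extension sits inside $C_{n-1+}$, so $f^j(C_{n-})$ would be a $p\cR^{n-1}f$-iterate of $C_{n-}$ strictly closer to $c$ than $S_{n-}$, contradicting the defining minimality of $S_{n-}$; the left extension is handled symmetrically, anchored on the left endpoint of $p\cR^nf_-(L_{n-})$. The sub-claim at $I_0$ follows by running the parallel analysis after transporting through the homeomorphism $f_-\circ(p\cR^nf_-)^{-1}$.

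For part (ii), suppose four distinct intervals $I_{k_1},\ldots,I_{k_4}\in\mathcal Q_{n\pm}$ contain a common point $p\in(0,1)$. The cores $J_{k_i}$ are pairwise interior-disjoint closed intervals. The key observation is that if two cores $J_i,J_j$ lie strictly on the same side of $p$, with $J_i$ farther from $p$, then the open interval $I_i$ (which contains both $J_i$ and $p$) extends from $J_i$ across $p$ and thus contains $J_j$ entirely in its interior, violating (i); likewise, if two cores share $p$ as a common boundary point, then any third core strictly on either side gives rise to the same type of containment violation. A short case analysis on the number of cores strictly left of $p$, strictly right of $p$, and containing $p$ then shows that no four intervals can fit around $p$ without violating (i), yielding the bound of three.

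The main obstacle is the careful structural verification in part (i), namely extracting from the combinatorial data of $S_{n\pm}$ and $L_{n\pm}$ the correct description of the two extensions of $Q_{n\pm}$ and the parallel structure of $I_0$, so that the minimality arguments close cleanly. Once (i) is established, part (ii) is a direct pigeonhole consequence.
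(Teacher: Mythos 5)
The approach differs from the paper's. For part (i), the paper pushes the hypothesized violation forward under $f$ until it lands in the last orbit element $Q_{n\pm}$, and then concludes directly from the hull description of $Q_{n\pm}$ in Definition~\ref{Q_n_def}: the extra interval of $\mathcal O_{n\pm}$ must be disjoint from $\mathrm{int}(C_n)$ and therefore must lie in $p\cR^nf_\pm(L_{n\pm})\setminus C_n$, which is impossible. Your bidirectional-shift reduction to the two endpoint cases ($Q_{n\pm}$ and $I_0$) is a sensible alternative framework, but the remaining analysis is where the actual content lies, and it is not carried out. Concretely: (a) the claim that ``the right extension sits inside $C_{(n-1)+}$'' is asserted without proof; it requires knowing where $S_{n-}$ and $A_{n-}$ sit relative to $C_{n-1}$, which in turn depends on Lemmas~\ref{L_n_combinatorial_lemma} and~\ref{L_n_in_P_n_lemma} and is not automatic. (b) For the left extension you say it is ``handled symmetrically,'' but it is \emph{not} symmetric: that side of $Q_{n-}\setminus\overline{C_n}$ is part of $p\cR^nf_-(L_{n-})$, not on the $S_{n-}$ side, so the minimality of $S_{n-}$ does not apply there and a genuinely different argument is needed. (c) The entire $I_0$ endpoint case is deferred to ``running the parallel analysis,'' but this is the half the paper does not even encounter, and nothing is said about what structural data plays the role of $S_{n\pm}$ there. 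There is also a small indexing slip: the core of $I_k$ is $J_{k+1}=f^{k+1}(\overline{C_{n\pm}})$, not $J_k$, so the violating condition should read $l\neq k+1$.

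Part (ii) is essentially the same pigeonhole argument as the paper's (the paper phrases it via the observation that each interval of $\mathcal Q_{n\pm}$ can meet at most three pairwise-disjoint intervals of $\mathcal O_{n\pm}$; your position-of-cores case analysis is equivalent), and that part is fine.

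In summary, the overall strategy is legitimate and you correctly identify where the difficulty lies, but what you flag as ``the main obstacle'' is precisely the substance of the proof, so as written this is a genuine gap in part (i). Compared with the paper, your route also takes on more work than necessary: the paper avoids the $I_0$ case entirely and resolves the $Q_{n\pm}$ case in one step from Definition~\ref{Q_n_def}, whereas your route requires separate minimality arguments on both sides and a parallel analysis at $I_0$.
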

\begin{proof}
Assume that some interval from $\mathcal Q_{n\pm}$ contains two different intervals from $\mathcal O_{n\pm}$ in its interior. Then so does the interval $Q_{n\pm}$, since $f$ is a homeomorphism on all intervals of~$\mathcal Q_{n\pm}$, except $Q_{n\pm}$. One of the two intervals of $\mathcal O_{n\pm}$ contained in the interior of $Q_{n\pm}$ is its core subinterval. Then, according to the construction of the interval $Q_{n\pm}$ (Definition~\ref{Q_n_def}), the other one is contained in $p\cR^nf_\pm(L_{n\pm})\setminus C_n$, which is not possible. This completes the proof of part~\ref{L_n_in_P_n_item1c}.

Part~\ref{L_n_in_P_n_item1b} follows immediately from part~\ref{L_n_in_P_n_item1c}, since according to part~\ref{L_n_in_P_n_item1c}, every interval from $\mathcal Q_{n\pm}$ has common points with no more than three intervals of $\mathcal O_{n\pm}$, none of them share the same core subinterval and all intervals of $\mathcal O_{n\pm}$ are pairwise disjoint.
\end{proof}

\subsection{Sizes of dynamically important intervals}
In this subsection we combine the combinatorial properties of of Lorenz maps established in the previous subsection, and the Real Koebe Distortion Principle in order to get control on the sizes of dynamically important intervals in the presence of real bounds.

\begin{lemma}\label{Interval_not_too_short_lemma}
	Given a pair of positive numbers $\delta,\Delta>0$ and a finite set $\Theta\subset\mathfrak P$, there exist positive real constants $\beta_1=\beta_1(\delta,\Delta,\Theta)$ and $\beta_2=\beta_2(\delta,\Delta,\Theta)$, such that $0<\beta_1<\beta_2<1$ and for any twice renormalizable Lorenz map $f\in\mathcal S_{\Theta}^2$ with real $(\delta,\Delta)$-bounds of level $0$, the following holds: if $I$ is any interval from the orbits of $C_{1+}$ or $C_{1-}$ before their first return to $C_1$ or any interval from the orbits of $C_{2+}$ or $C_{2-}$ before their first return to $C_2$, and $J$ is one of the intervals $C_+$ or $C_-$, such that $I\subset J$, then
	$$
	\beta_1<|I|/|J|<\beta_2.
	$$
\end{lemma}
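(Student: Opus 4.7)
The plan is to prove both bounds via a compactness argument combined with Koebe-type distortion estimates. Since $\Theta$ is finite, the return times $m_1^\pm, m_2^\pm$ are uniformly bounded by some $M = M(\Theta)$, and the combinatorial type $\rho_2(f)$ takes only finitely many values. Lemma~\ref{derivative_bounds_lemma} together with the $(\delta,\Delta)$-bounds of level $0$ yields uniform $C^2$-bounds on $\eta_f^\pm$, which combined with $c_f \in [\delta, 1-\delta]$ and nontriviality makes the family of admissible maps precompact in the $C^0$ topology on $[0,1]$ and in $C^2$ on compact subsets of $[0,1]\setminus\{c\}$. Since $\rho_2$ is locally constant on the renormalization locus, the assignment $f \mapsto (I,J)$ is continuous on each of the finitely many components determined by $\rho_2$, and so is $f \mapsto |I|/|J|$. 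It therefore suffices to rule out the degenerate limits $|I|/|J| \to 1$ and $|I|/|J| \to 0$.

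For the upper bound $\beta_2 < 1$, the orbit intervals have pairwise disjoint interiors by definition of first return, so $I \subsetneq J$. In a hypothetical limit $|I_\infty| = |J_\infty|$, the half-interval $J_\infty \in \{C_+^\infty, C_-^\infty\}$ would itself have to be an orbit interval before first return; but $J_\infty$ contains one of the fixed endpoints $0$ or $1$, and a returning orbit interval cannot coincide with an invariant half of $[0,1]$ without forcing trivial dynamics, contradicting $\rho_2(f_\infty) \in \Theta^2$. For the lower bound $\beta_1 > 0$, every iterate of $C_{n\pm}$ (for $n = 1, 2$) after the initial one lies in the complement of the interior of $C_n$, hence at definite distance from $c$: any limit of the admissible family is still twice renormalizable with combinatorics in $\Theta$, so the renormalization intervals $C_1, C_2$ cannot collapse to $\{c\}$. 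On this region the branches $f_\pm$ are analytic diffeomorphisms of bounded distortion, so by Koebe's principle any two consecutive orbit intervals have comparable sizes; iterating at most $M$ times, every orbit interval has size comparable to the starting interval $C_{n\pm}$, which is uniformly bounded below by the same compactness argument.

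The main obstacle is the single critical iteration $f_\pm$ applied to the starting interval $C_{n\pm}$ touching $c$, at which $f_\pm'(c) = 0$, so Koebe does not apply directly. This is handled using the power-law structure $f_\pm(x) = \eta_f^\pm(|x-c|^\alpha)$ together with the uniform bounds on $\eta_f^\pm$ from Lemma~\ref{derivative_bounds_lemma}: the size of $f_\pm(C_{n\pm})$ is comparable to $|C_{n\pm}|^\alpha$, and the lower bound on $|C_{n\pm}|$ obtained from compactness translates into a lower bound on $|f_\pm(C_{n\pm})|$, which then propagates along the rest of the orbit via Koebe, completing the chain of comparisons and yielding the required uniform constants $\beta_1, \beta_2$.
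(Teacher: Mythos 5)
Your approach is genuinely different from the paper's, which gives a short \emph{direct} argument rather than a compactness argument, and unfortunately the compactness route as you have set it up is circular at the decisive point. The lower bound on $|C_{2\pm}|$ is obtained in your proof by asserting that ``any limit of the admissible family is still twice renormalizable with combinatorics in $\Theta$, so the renormalization intervals $C_1, C_2$ cannot collapse to $\{c\}$.'' But the set of maps in $\mathcal S_\Theta^2$ with real $(\delta,\Delta)$-bounds of level $0$ has no reason to be closed: a limit of such maps could a priori fail to be twice renormalizable precisely because $C_{2\pm}$ has collapsed, and ruling out this collapse is exactly what the lemma has to establish. Your final paragraph does gesture at the correct mechanism --- the power-law estimate at the critical point --- but feeds into it ``the lower bound on $|C_{n\pm}|$ obtained from compactness,'' so the argument never closes. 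The paper instead derives the lower bound directly and without any limit-taking: since $\Theta$ is finite, $p\cR^2 f_\pm = f^{k_\pm}$ with $k_\pm \le B(\Theta)$; weak nontriviality of $p\cR^2 f$ (which fixes the endpoints of $C_2$) gives the expansion $|f^{k_\pm}(C_{2\pm})| \ge |C_{2\pm}|$, while Lemma~\ref{derivative_bounds_lemma} together with the power law at $c$ gives the contraction $|f^{k_\pm}(C_{2\pm})| \le K_1^{B-1}K_2\, |C_{2\pm}|^\alpha$; since $\alpha > 1$, these two are incompatible once $|C_{2\pm}|$ is small enough. This produces an explicit lower bound with no appeal to precompactness or to continuity of $\rho_2$.

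Two further problems. Your upper-bound argument claims that $J_\infty$ ``contains one of the fixed endpoints $0$ or $1$'' and that ``a returning orbit interval cannot coincide with an invariant half of $[0,1]$.'' This is not so: in the statement of the lemma, $J\in\{C_+,C_-\}$ denotes a half of the renormalization interval $C=C_1\subset(f_+(c),f_-(c))$, which lies strictly in the interior of $(0,1)$ and is not an invariant half of $[0,1]$, so that reasoning does not apply. (The paper obtains $\beta_2$ cheaply from $\beta_1$ and disjointness of the orbit intervals, taking $\beta_2>1-\beta_1$.) Finally, your observation that the iterates of $C_{n\pm}$ after the first lie ``at definite distance from $c$'' also presupposes what is to be proved: disjointness from the interior of $C_n$ only keeps them at distance at least $\min(|C_{n+}|,|C_{n-}|)$ from $c$, and that quantity is precisely the one you have not yet bounded below.
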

\begin{proof}
	Since $\Theta$ is a finite set, the prerenormalization $p\cR^2 f_\pm=f^{k_\pm}$ is a finite composition, where $k_\pm\le B$, for some constant $B=B(\Theta)$. Furthermore, we have
	$$
	|f^{k_\pm}(C_{2\pm})| \ge |C_{2\pm}|.
	$$
	On the other hand, it follows from Lemma~\ref{derivative_bounds_lemma} that there exist real constants $K_1=K_1(\delta,\Delta)>0$ and $K_2=K_2(\delta,\Delta)>0$, such that for any $f\in\mathbf L$ with real $(\delta,\Delta)$-bounds of level $0$ and any $x\in[0,1]$, we have 
	$$
	f'(x)<K_1\qquad\text{and}\qquad |f_\pm(x)-f_\pm(c_f)|\le K_2|x-c_f|^{\alpha}.
	$$
	Now we have $|f(C_{2\pm})|\le K_2 |C_{2\pm}|^{\alpha}$, and 
	$$
	|f^{k_\pm}(C_{2\pm})| <K_1^{B-1} K_2 |C_{2\pm}|^{\alpha}.
	$$
	Since $\alpha>1$, if $C_{2\pm}$ is too short, then the right-hand side of the last inequality is smaller than $|C_{2\pm}|$, which is a contradiction. This implies that there is a lower bound on the lengths of the intervals $|C_{2\pm}|$, hence also a lower bound on the lengths of the intervals $|f^{k_\pm}(C_{2\pm})|$. The latter together with the upper bound on $f'$ implies existence of a lower bound on the lengths of all intervals from the orbits of $C_{2\pm}$ before their return to $C_2$. Since $|J|\ge\delta$, we conclude that there exists $\beta_1>0$, such that 
	$$
	|I|/|J|>\beta_1.
	$$
	Finally, we may choose $\beta_2>1-\beta_1$.
	
	The proof for the orbits of the intervals $C_{1\pm}$ is analogous.
\end{proof}

Now we recall the Macroscopic Koebe Principle (c.f. Section~IV.3 of~\cite{deMelo_vanStrien}). We state it for Lorenz maps from $\mathfrak L$, however it holds for a much wider class of maps. 

If $I\subset J$ are two intervals and $\tau>0$ is a real number, we say that $J$ contains a $\tau$-scaled neighborhood of $I$ if each of the two component of $J\setminus I$ has at least length $\tau|J|$.

\begin{theorem}[\textbf{Macroscopic Koebe Principle}]\label{Macro_Koebe_theorem}
Given $f\in\mathfrak L$, there exists a strictly positive function $B_0\colon\bbR^+\to\bbR^+$ such that for any pair of intervals $J\subset T$, any $m\ge 0$ and any $0<\tau <1$, if the following conditions are satisfied:
\begin{enumerate}
	\item $f^m|T$ is a diffeomorphism;
	\item $f^m(T)$ contains a $\tau$-scaled neighborhood of $f^m(J)$;
	\item $\sum_{i=0}^{m-1} |f^i(T)|\le 3$;
\end{enumerate}
then $T$ contains a $B_0(\tau)$-scaled neighborhood of $J$.
\end{theorem}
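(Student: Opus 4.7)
The plan is to deduce the Macroscopic Koebe Principle from cross-ratio distortion estimates along the orbit, following the classical strategy of de Melo and van Strien. First I would fix a convenient cross-ratio: for intervals $J\Subset T$ with components $L,R$ of $T\setminus J$, set
$$
B(T,J)=\frac{|J|\,|T|}{|J\cup L|\,|J\cup R|}.
$$
An elementary computation shows that $T$ containing a $\tau$-scaled neighborhood of $J$ is equivalent to a lower bound $B(T,J)\ge\beta(\tau)>0$, and conversely any lower bound on $B(T,J)$ forces a scaled neighborhood whose constant depends only on that bound. So it suffices to deduce $B(T,J)\ge c(\tau)$ from the hypothesis $B(f^m(T),f^m(J))\ge\beta(\tau)$.

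Next I would invoke the single-step cross-ratio distortion estimate: for each branch $g=f|_{f^i(T)}$, which is an analytic diffeomorphism since $f^m|_T$ is, one has
$$
\log\frac{B(g(T'),g(J'))}{B(T',J')}\ge -C\,|g(T')|
$$
for any $J'\Subset T'\subset f^i(T)$, where $C$ depends only on uniform bounds on the Schwarzian derivative of the two analytic branches of $f$ on compacta bounded away from $c$. Telescoping along the orbit $T,f(T),\ldots,f^{m-1}(T)$ then gives
$$
\log\frac{B(f^m(T),f^m(J))}{B(T,J)}\ge -C\sum_{i=0}^{m-1}|f^i(T)|\ge -3C,
$$
so that $B(T,J)\ge e^{-3C}\beta(\tau)$, which translates back into a $B_0(\tau)$-scaled neighborhood of $J$ inside $T$ with an explicit $B_0$.

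The main obstacle is the single-step inequality with the correct \emph{linear} dependence on $|g(T')|$: a naive Taylor expansion only yields a quadratic bound, which would not sum to something finite merely from $\sum|f^i(T)|\le 3$. For analytic maps with nonpositive Schwarzian this is the classical cross-ratio Koebe lemma, proved by direct computation with the Schwarzian. For general analytic branches one either works with the auxiliary $A$-cross-ratio of de Melo--van Strien that is monotone under arbitrary compositions of diffeomorphic branches with bounded $C^3$ geometry, or applies the Kozlovski trick of multiplying by an auxiliary factor that produces nonpositive Schwarzian while changing the relevant quantities by a controlled amount. Either route produces the required linear single-step bound, after which the multiplicative telescoping above closes the proof.
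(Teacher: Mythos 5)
The paper does not supply its own proof of this statement: it is quoted from Section~IV.3 of de Melo and van Strien's book, with Remark~4.10 only commenting on how the constant depends on $K_1,K_2$ from~(\ref{eta_f_bounds}). Your cross-ratio strategy is exactly the one in that reference, so the broad route is right: reduce the $\tau$-scaled neighborhood to a lower bound on the Poincar\'e cross-ratio $B(T,J)$, control the one-step distortion of $B$ under the analytic branches of $f$, and telescope along the orbit using the hypothesis $\sum_i|f^i(T)|\le 3$. Your reduction between scaled neighborhoods and cross-ratio lower bounds is standard and fine, and the telescoping scheme is the correct one.

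However, the passage where you insist on a \emph{linear} one-step bound and claim that the quadratic bound ``would not sum to something finite merely from $\sum|f^i(T)|\le 3$'' is mistaken, and the detour through the Kozlovski trick or the auxiliary $A$-cross-ratio is unnecessary for this theorem. The standard estimate for a $C^3$ (in particular analytic) branch $g$ with Schwarzian bounded above by $K$ reads
$$
\frac{B(g(T'),g(J'))}{B(T',J')}\ \ge\ e^{-CK\,|T'|^2},
$$
and this quadratic bound telescopes perfectly well: since each $f^i(T)$ lies in $[0,1]$, one has $|f^i(T)|\le 1$, hence $\sum_{i=0}^{m-1}|f^i(T)|^2\le\sum_{i=0}^{m-1}|f^i(T)|\le 3$, and therefore $B(T,J)\ge e^{-3CK}\,B\bigl(f^m(T),f^m(J)\bigr)\ge e^{-3CK}\,\beta(\tau)$, which gives the claimed $B_0(\tau)$. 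The only point that genuinely needs care is the Schwarzian bound near the critical point $c$: the branches of a Lorenz map behave like $x\mapsto|x-c|^\alpha$ there, and such power maps have \emph{nonpositive} Schwarzian, so they cannot contract cross-ratios; away from $c$ the branches are analytic with bounded $C^3$ norm (this is what Lemma~\ref{derivative_bounds_lemma} controls), so the constant $K$ above is finite, explaining why $B_0$ depends on $f$ (or, through real bounds, only on $(\delta,\Delta)$, as noted in Remark~\ref{Koebe_dependence_remark}). With that correction your outline is a valid proof of the statement and matches the cited source.
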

\begin{remark}\label{Koebe_dependence_remark}
It follows from the proof of the Macroscopic Koebe Principle (c.f. Section~IV.3 of~\cite{deMelo_vanStrien}) that the function $B_0$ depends on the constants $K_1$ and $K_2$ from~(\ref{eta_f_bounds}). Hence, according to Lemma~\ref{derivative_bounds_lemma}, for any pair of real numbers $\delta,\Delta>0$, the function $B_0$ can be chosen uniformly over the class of all at least once renormalizable $f\in\mathfrak L$ having real $(\delta,\Delta)$-bounds of level $0$.
\end{remark}

We use the Macroscopic Koebe Principle together with the combinatorial analysis of Lorenz maps to prove the following Lemma:

\begin{lemma}\label{O_n_Q_n_decrease_lemma}
For any pair of positive numbers $\delta,\Delta>0$ and a finite set $\Theta\subset\mathfrak P$, there exists a positive real constant $\beta_3=\beta_3(\delta,\Delta,\Theta)$ such that $0<\beta_3<1$, and for any $n\in\bbN$ and any $f\in\mathcal S_\Theta^{n+1}$ with real $(\delta,\Delta)$-bounds of level $n$, every interval from the orbits $\mathcal O_{n\pm}$ and $\mathcal Q_{n\pm}$ has length less than $\beta_3^n$.
\end{lemma}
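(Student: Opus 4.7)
The plan is to proceed by induction on $n$, combining Lemma~\ref{Interval_not_too_short_lemma} (to compare successive renormalization intervals) with the Macroscopic Koebe Principle (Theorem~\ref{Macro_Koebe_theorem}, to control distortion), telescoping over a bounded number $L$ of renormalization levels so that the per-level contraction closes. First, applying Lemma~\ref{Interval_not_too_short_lemma} to $\cR^{k-1}f$ for $1\le k\le n$ gives $|C_1(\cR^{k-1}f)|\le\beta_2$, so $|C_k|/|C_{k-1}|\le\beta_2$ and hence $|C_n|\le\beta_2^n$. Next, $Q_{n\pm}$ contains a $\tau$-scaled neighborhood of its core subinterval $p\cR^nf_\pm(C_{n\pm})$ with uniform $\tau=\beta_1\delta$: by Lemma~\ref{L_n_in_P_n_lemma}, the two components of $Q_{n\pm}\setminus(L_{n\pm}\cup C_{n\mp})$ contain intervals from certain $p\cR^{n-1}f$-orbits whose lengths exceed $\beta_1\delta|C_{n-1}|$ by Lemma~\ref{Interval_not_too_short_lemma} applied to $\cR^{n-1}f$, while $|Q_{n\pm}|\le|C_{n-1}|$ and the core lies in $L_{n\pm}\cup C_{n\mp}$. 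Since $\sum_{W\in\mathcal Q_{n\pm}}|W|\le 3$ by Lemma~\ref{Q_n_orbit_lemma}, the Macroscopic Koebe Principle applied to the diffeomorphism $f^{m_n^\pm-1-k}\colon W_k\to Q_{n\pm}$ yields $B_0(\tau)$-scaled neighborhoods of cores in each $W_k$ and uniform bounded distortion of $f^{m_n^\pm-1-k}$ on each core.

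For the inductive step, assume the bound at levels below $n$. If $I\in\mathcal O_{n\pm}$ lies in $C_{n-1}$, Lemma~\ref{Interval_not_too_short_lemma} applied to $\cR^{n-1}f$ gives $|I|\le\beta_2|C_{(n-1)\pm'}|\le\beta_2^n$ directly. Otherwise, write $I=f^\ell((p\cR^{n-1}f)^j(C_{n\pm}))$ during an excursion, with containing interval $I'=f^\ell(C_{(n-1)\pm'})\in\mathcal O_{(n-1)\pm'}$. Bounded distortion of $f^\ell$ on $C_{(n-1)\pm'}$ follows from factoring $f^{m_{n-1}^{\pm'}}=f^{m_{n-1}^{\pm'}-\ell}\circ f^\ell$ and using the real-bounds distortion estimate for $p\cR^{n-1}f$ on $C_{(n-1)\pm'}$ together with the Macroscopic Koebe distortion of $f^{m_{n-1}^{\pm'}-\ell}$ on $I'$ at level $n-1$. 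Combined with $|(p\cR^{n-1}f)^j(C_{n\pm})|/|C_{(n-1)\pm'}|\le\beta_2$ from Lemma~\ref{Interval_not_too_short_lemma}, this yields $|I|/|I'|\le K\beta_2$ for a uniform $K$, so $|I|\le K\beta_2\cdot\beta_3^{n-1}$ by the inductive hypothesis. The analogous nested containment $W_k\subset W_\ell'\in\mathcal Q_{(n-1)\pm'}$ together with the Koebe comparability of $|W_k|$ and its core length treats $\mathcal Q_{n\pm}$.

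The main obstacle is that $K\beta_2$ may exceed $1$, so a one-level induction need not close. I would handle this by telescoping over $L$ renormalization levels: iterating the above ratio estimate gives $|I_n|/|I_{n-L}|\le K\beta_2^L/(\beta_1\delta)$ with a uniform Koebe constant for the full composition. Choosing $L$ sufficiently large that $(K/(\beta_1\delta))^{1/L}\beta_2<1$, and setting $\beta_3$ just below $1$ but above both this quantity and $(1-\delta)^{1/L}$---the latter chosen to absorb the base cases $n<L$ via the direct bound $|I|\le 1-\delta$ coming from $\delta\le c\le 1-\delta$---the induction closes with the required $\beta_3=\beta_3(\delta,\Delta,\Theta)\in(0,1)$.
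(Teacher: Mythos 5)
Your overall plan is in the same spirit as the paper's argument (combinatorics from Lemmas~\ref{Interval_not_too_short_lemma} and~\ref{L_n_in_P_n_lemma}, disjointness-up-to-multiplicity-3 from Lemma~\ref{Q_n_orbit_lemma}, Macroscopic Koebe for distortion control, nesting of level-$(n)$ intervals inside level-$(n-1)$ intervals), but the architecture is different, and the key step of your inductive estimate contains a genuine gap.

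The gap is in the claim that ``bounded distortion of $f^\ell$ on $C_{(n-1)\pm'}$ follows from factoring $f^{m_{n-1}^{\pm'}}=f^{m_{n-1}^{\pm'}-\ell}\circ f^\ell$ and using the real-bounds distortion estimate for $p\cR^{n-1}f$ on $C_{(n-1)\pm'}$.'' The prerenormalization $p\cR^{n-1}f_{\pm'}=f^{m_{n-1}^{\pm'}}|_{C_{(n-1)\pm'}}$ does \emph{not} have bounded distortion on $C_{(n-1)\pm'}$: the first application of $f$ on that interval is the critical branch $f_{\pm'}$, whose derivative vanishes at the endpoint $c$ of $C_{(n-1)\pm'}$, so the distortion blows up as one approaches $c$. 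The real $(\delta,\Delta)$-bounds control $\eta^\pm_{f}$ (the ``outer'' factor), not the full composition $\eta^\pm_{f}(|x-c|^\alpha)$. Consequently, the per-level ratio inequality $|I|/|I'|\le K\beta_2$ you extract from this factorization is unjustified precisely in the case $j=0$, when $(p\cR^{n-1}f)^j(C_{n\pm})=C_{n\pm}$ has $c$ as a boundary point. (In fact, because of the power law, one could argue the ratio only \emph{improves} at the critical step — but that would require an explicit use of the power-law structure, which you do not invoke.) The paper avoids this issue entirely by the way the orbit $\mathcal Q_{n\pm}$ is defined in~\eqref{Q_n_orbit}: it starts from $f_\pm\circ(p\cR^nf_\pm)^{-1}(Q_{n\pm})$, i.e.\ \emph{after} the critical branch has been applied, so that the map taking each orbit interval onto $Q_{n\pm}$ is a genuine diffeomorphism and the Macroscopic Koebe Principle applies cleanly.

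The second difference is architectural. The concern you raise — that the per-level ratio $K\beta_2$ may exceed $1$, necessitating a telescoping argument over $L$ levels — does not arise in the paper's proof. Instead of comparing lengths through a distortion constant, the paper establishes a nesting with definite Koebe space: after showing that each $W\in\mathcal Q_{k\pm}$ contains a $B_0$-scaled neighborhood of its core subinterval, it proves (via the interval $A_{(k+1)\pm}=p\cR^kf_\pm(p\cR^{k+1}f_\pm)^{-1}(Q_{(k+1)\pm})$ and a second application of Macroscopic Koebe) that the $B_0$-scaled neighborhood of every interval of $\mathcal Q_{(k+1)\pm}$ sits inside some interval of $\mathcal Q_{k+}$ or $\mathcal Q_{k-}$. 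This gives the per-level contraction factor $1-2B_0<1$ directly, with no telescoping and no need to analyze compositions across multiple renormalization levels. Your telescoping idea is a reasonable fallback, but it also introduces new issues you would need to address carefully: the Macroscopic Koebe Principle as stated in Theorem~\ref{Macro_Koebe_theorem} yields scaled-neighborhood conclusions, not multiplicative distortion bounds of the form $|I|/|I'|\le K|f^m(I)|/|f^m(I')|$, so you would need the distortion version of the Koebe Principle; and the sum-of-lengths hypothesis must be verified for the full $L$-level composition, which requires some care about which single-level orbit one is using.
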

\begin{proof}
First, we observe that according to part~\ref{L_n_in_P_n_item1a} of Lemma~\ref{L_n_in_P_n_lemma} and Lemma~\ref{Interval_not_too_short_lemma}, there exists $\tau=\tau(\delta,\Delta,\Theta)$ such that for any $k=1,\dots,n$, the interval $Q_{k\pm}$ from the orbit $\mathcal Q_{k\pm}$ contains a $\tau$-scaled neighborhood of $C_k$, hence also a $\tau$-scaled neighborhood of the corresponding core subinterval $f^{m_k^\pm}(\overline{C_{k\pm}})$. Next, we note that according to part~\ref{L_n_in_P_n_item1b} of Lemma~\ref{Q_n_orbit_lemma}, the sum of the lengths of all intervals from the orbit $\mathcal Q_{k\pm}$ is not greater than $3$, hence it follows from the Macroscopic Koebe Principle and Remark~\ref{Koebe_dependence_remark} that there exists a constant $B_0=B_0(\delta,\Delta,\tau)$, such that every interval from the orbit $\mathcal Q_{k\pm}$ contains a $B_0$-scaled neighborhood of its core subinterval.

Next we will show that for every $k=1,\dots,n-1$, the $B_0$-scaled neighborhood of every interval from the orbit $\mathcal Q_{(k+1)\pm}$ is contained in some interval of the orbits $\mathcal Q_{k+}$ or $\mathcal Q_{k-}$.

Consider the interval $A_{(k+1)\pm}=p\cR^kf_\pm(p\cR^{k+1}f_\pm)^{-1}(Q_{(k+1)\pm})$ from the orbit $\mathcal Q_{(k+1)\pm}$. Since the interval $A_{(k+1)\pm}$ is eventually mapped homeomorphically onto $Q_{(k+1)\pm}$ by the dynamics of $p\cR^kf$, it follows that $A_{(k+1)\pm}\subset C_k$ and $c_f\not\in A_{(k+1)\pm}$. Thus, $f(A_{(k+1)\pm})$ and all its further iterates under the dynamics of $f$ until the return to $Q_{(k+1)\pm}$ are contained in the core subintervals of some intervals of the orbits $\mathcal Q_{k+}$ and $\mathcal Q_{k-}$. Hence, the above statement will hold for these intervals.

Finally, we observe that $A_{(k+1)\pm}\subset C_k\subset Q_{k\pm}$, so $Q_{k\pm}$ contains a $\tau$-scaled neighborhood of $A_{(k+1)\pm}$, hence, by the Macroscopic Koebe Principle, the $B_0$-scaled neighborhoods of all intervals of the orbit $\mathcal Q_{(k+1)\pm}$ before and including $A_{(k+1)\pm}$ are contained in the corresponding intervals of the orbit $\mathcal Q_{k\pm}$.

We complete the proof by choosing $\beta_3=B_0$.
\end{proof}

Combining the results of this section and Lemma~\ref{definite_neighb_lemma}, we obtain the following important result:
\begin{lemma}\label{pRnf_distorted_root_lemma}
	For any pair of positive numbers $\delta,\Delta>0$, a finite set $\Theta\subset\mathfrak P$ and a real number $t\in\bbR$ such that $0<t<1$, there exist a positive number $\mu_0=\mu_0(\delta,\Delta, \Theta, t)$, such that for every real number $r>0$, there exists $n_0=n_0(r,t,\delta,\Delta,\Theta)\in\bbN$ with the property that for all $n\ge n_0$ and $f\in\mathfrak L_r\cap\mathcal S_\Theta^{n+1}$ with real $(\delta,\Delta)$-bounds of level $n$, the map $(p\cR^n f_\pm)^{-1}$ is a distorted root of degree $\alpha$ on $D_t(L_{n\pm}\cup C_{n\mp})$ with modulus $\mu_0$, precomposed and postcomposed with some affine maps.	
\end{lemma}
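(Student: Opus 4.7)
The plan is to decompose $(p\cR^n f_\pm)^{-1}$ into the single critical inverse branch $f_\pm^{-1}$ (which carries the root of degree $\alpha$) followed by a long chain $G$ of univalent inverse branches of $f$, extend $G$ to a complex neighborhood with definite modulus, and recognize the combined structure as a distorted root after affine normalization. Concretely, writing $p\cR^n f_\pm=f_{s_m}\circ\cdots\circ f_{s_2}\circ f_\pm$ (with $m=m_n^\pm$ and only $f_\pm$ meeting the critical point), we obtain
$$
(p\cR^n f_\pm)^{-1}=f_\pm^{-1}\circ G,\qquad G:=f_{s_2}^{-1}\circ\cdots\circ f_{s_m}^{-1}.
$$
The Proposition preceding Lemma~\ref{L_n_in_P_n_lemma} (applied to $G=f_\pm\circ(p\cR^n f_\pm)^{-1}$) shows that $G$ extends as a real homeomorphism to $Q_{n\pm}$, and by Lemma~\ref{L_n_in_P_n_lemma} combined with Lemma~\ref{Interval_not_too_short_lemma}, $Q_{n\pm}$ contains $L_{n\pm}\cup C_{n\mp}$ with a $\tau$-scaled neighborhood for some $\tau=\tau(\delta,\Delta,\Theta)>0$.

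To extend $G$ analytically to the complex plane, apply Lemma~\ref{definite_neighb_lemma} to the chain of univalent inverse branches $f_{s_i}^{-1}$, whose intermediate intervals belong to the orbit~$\mathcal Q_{n\pm}$. Part~(\ref{L_n_in_P_n_item1b}) of Lemma~\ref{Q_n_orbit_lemma} gives $\sum|I_k|\le 3$ (from bounded multiplicity), while Lemma~\ref{O_n_Q_n_decrease_lemma} yields $\max|I_k|\le\beta_3^n$. Fixing $t'\in(t,1)$ and setting $\Delta t:=t'-t$, we conclude that once $n$ is large enough that $\beta_3^n\le d$ (with $d$ from Lemma~\ref{definite_neighb_lemma}), the composition $G$ extends univalently (univalence being preserved through the chain by Lemma~\ref{Generalized_S_lemma}) and analytically to $D_{t'}(Q_{n\pm})$. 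Since $D_t(L_{n\pm}\cup C_{n\mp})\Subset D_{t'}(Q_{n\pm})$ with modulus bounded below by some $\mu_1=\mu_1(\tau,t,t')>0$ independent of $n$, we obtain a uniform modulus of complex extension for $G$.

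To assemble the distorted-root decomposition, use the power law~(\ref{f_equals_psi_alpha_phi_equation}) to factor $f_\pm(x)=\psi_\pm(|\phi_\pm(x)|^\alpha)$ with $\phi_\pm$ affine and $\psi_\pm$ conformal near the origin, whence $f_\pm^{-1}=\phi_\pm^{-1}\circ\phi_\alpha\circ\psi_\pm^{-1}$ (up to a sign, itself affine). By Definition~\ref{L_r_def}, the condition $f\in\mathfrak L_r$ guarantees that $\psi_\pm^{-1}$ extends conformally to a neighborhood of $f_\pm(c)$ of size comparable to $r$; since the image $G(L_{n\pm}\cup C_{n\mp})$ shrinks exponentially with $n$ by Lemma~\ref{O_n_Q_n_decrease_lemma}, the corresponding modulus of extension for $\psi_\pm^{-1}$ grows without bound, and in particular exceeds $\mu_1$ once $n\ge n_0=n_0(r,t,\delta,\Delta,\Theta)$. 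Now choose affine maps $T_1,T_2$ normalizing $D_t(L_{n\pm}\cup C_{n\mp})$ to $D_t((-a,1))$ (with $a$ uniformly bounded by Lemma~\ref{Interval_not_too_short_lemma}) and absorbing the affine $\phi_\pm^{-1}$ together with any residual translation, so that the inner map $h$ built from $\psi_\pm^{-1}\circ G\circ T_1^{-1}$ and the outer map $g$ both fix the origin; the representation
$$
(p\cR^n f_\pm)^{-1}=T_2\circ g\circ\phi_\alpha\circ h\circ T_1
$$
then exhibits $(p\cR^n f_\pm)^{-1}$ as a distorted root of degree $\alpha$ on $D_t(L_{n\pm}\cup C_{n\mp})$, up to affines, with modulus $\mu_0:=\mu_1$ (since $g$ is affine and extends to all of $\mathbb C$).

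The main obstacle is obtaining the uniform modulus of extension for $G$ in the second step. This requires both combinatorial inputs of Section~\ref{dyn_intervals_sec} simultaneously: bounded multiplicity of $\mathcal Q_{n\pm}$ (to control $\sum|I_k|$) and exponential shrinkage of its intervals (to force $\max|I_k|$ below the threshold of Lemma~\ref{definite_neighb_lemma}); neither alone suffices. A secondary technicality is the affine normalization in the final step: ensuring that both $g$ and $h$ fix the origin requires locating the image of the critical point under the extended homeomorphism $G$ on $Q_{n\pm}$ and choosing $T_1,T_2$ accordingly, but this bookkeeping is routine once the analytic extension is available.
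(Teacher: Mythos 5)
Your proposal follows the same overall route as the paper's proof: decompose $(p\cR^n f_\pm)^{-1}$ as the critical inverse branch $f_\pm^{-1}$ postcomposed with the chain $G=f_\pm\circ(p\cR^n f_\pm)^{-1}$ of noncritical inverse branches, extend $G$ to a hyperbolic neighborhood of $Q_{n\pm}$ by Lemma~\ref{definite_neighb_lemma} (fed by Lemma~\ref{Q_n_orbit_lemma}(\ref{L_n_in_P_n_item1b}) for the total length bound and Lemma~\ref{O_n_Q_n_decrease_lemma} for the small-diameter hypothesis), then handle $f_\pm^{-1}$ via the power law and condition~(\ref{L_r_def_item2}) of Definition~\ref{L_r_def}, using shrinkage of the image intervals to guarantee the conformal extension of $\psi_\pm^{-1}$, and finally read off the modulus from the compact containment $L_{n\pm}\cup C_{n\mp}\Subset Q_{n\pm}$ supplied by Lemma~\ref{L_n_in_P_n_lemma} and Lemma~\ref{Interval_not_too_short_lemma}. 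This is the paper's argument.

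There is, however, a genuine (if minor and fixable) gap in your handling of the hyperbolic neighborhoods. You fix $t'\in(t,1)$ and then assert $D_t(L_{n\pm}\cup C_{n\mp})\Subset D_{t'}(Q_{n\pm})$ with modulus bounded below. This containment is \emph{not} automatic for an arbitrary $t'>t$: since larger parameter means a smaller neighborhood, $D_{t'}(Q_{n\pm})$ can fail to contain $D_t(L_{n\pm}\cup C_{n\mp})$ when $t'-t$ is not small compared to the $\tau$-scaled gap between the two intervals. Concretely, the top vertices of these lens regions lie roughly at heights $|L_{n\pm}\cup C_{n\mp}|/(2t)$ and $|Q_{n\pm}|/(2t')$ above the real line, and the former exceeds the latter whenever $t'/t>|Q_{n\pm}|/|L_{n\pm}\cup C_{n\mp}|$, which happens for $t'$ close to $1$ and $\tau$ small. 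To repair this you would need to pick $t'-t$ small depending on $\tau(\delta,\Delta,\Theta)$; but the cleaner route, and the one the paper takes, is to dispense with $t'$ altogether: apply Lemma~\ref{definite_neighb_lemma} starting from $D_t(Q_{n\pm})$ with some fixed $\Delta t<t$, so that $G$ extends to $D_t(Q_{n\pm})$. Then $D_t(L_{n\pm}\cup C_{n\mp})\Subset D_t(Q_{n\pm})$ holds automatically (same parameter, strictly smaller interval), and Lemma~\ref{L_n_in_P_n_lemma}(\ref{L_n_in_P_n_item1a}) together with Lemma~\ref{Interval_not_too_short_lemma} gives $\mod\bigl(D_t(L_{n\pm}\cup C_{n\mp}),D_t(Q_{n\pm})\bigr)\ge\mu_0$ uniformly. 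A secondary slip: you attribute univalence of the chain to Lemma~\ref{Generalized_S_lemma}, but that lemma is a Schwarz-type inclusion and says nothing about injectivity; univalence of $G$ comes simply from each $f_{s_i}^{-1}$ being univalent on its slit-plane domain and the composition being well defined step by step.
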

\begin{proof}
It follows from part~\ref{L_n_in_P_n_item1b} of Lemma~\ref{Q_n_orbit_lemma} that the total length of all intervals from the finite orbit $\mathcal Q_{n\pm}$
is not greater than~$3$. At the same time, Lemma~\ref{O_n_Q_n_decrease_lemma} implies that the length of the longest interval from the finite orbit $\mathcal Q_{n\pm}$ converges to zero uniformly in $f$, as $n\to\infty$. These observations together with Lemma~\ref{definite_neighb_lemma} imply that there exists $n_0=n_0(r,t,\delta,\Delta,\Theta)\in\bbN$, such that for all $n\ge n_0$, the map $f_\pm\circ (p\cR^n f_\pm)^{-1}$ is defined on $D_t(Q_{n\pm})$ and
$$
f_\pm\circ (p\cR^n f_\pm)^{-1}(D_t(Q_{n\pm}))\subset D_{t/2}(f_\pm\circ (p\cR^n f_\pm)^{-1}(Q_{n\pm})).
$$

Due to condition~\ref{L_r_def_item2} of Definition~\ref{L_r_def}, and the fact that the length of the intervals $f_\pm\circ (p\cR^n f_\pm)^{-1}(Q_{n\pm})$ converges to zero uniformly in $f$, we may assume without loss of generality that $n_0$ is large enough, so that if $n\ge n_0$, then the inverse map $f_\pm^{-1}$ is a root of degree~$\alpha$ on $D_{t/2}(f_\pm\circ (p\cR^n f_\pm)^{-1}(Q_{n\pm}))$ precomposed and postcomposed with some conformal maps. Together with the above inclusion this implies that $(p\cR^n f_\pm)^{-1}$ is a root of degree $\alpha$ on $D_t(Q_{n\pm})$ up to a precomposition and a postcomposition with conformal maps.

Finally, it follows from part~\ref{L_n_in_P_n_item1a} of Lemma~\ref{L_n_in_P_n_lemma} and Lemma~\ref{Interval_not_too_short_lemma} that there exists a positive real number $\mu_0=\mu_0((\delta,\Delta, \Theta, t)$, such that $\mod(D_t(L_{n\pm}\cup C_{n\mp}),D_t(Q_{n\pm}))\ge \mu_0$, for all Lorenz maps $f$ satisfying the conditions of Lemma~\ref{pRnf_distorted_root_lemma}. Hence, it follows that the map $(p\cR^n f_\pm)^{-1}$ is a distorted root of degree $\alpha$ on $D_t(L_{n\pm}\cup C_{n\mp})$ with modulus $\mu_0$, precomposed and postcomposed with some affine maps.
\end{proof}

\section{Proofs of main results}\label{Main_proofs_sec}

In this section we give proofs of our main results by combining the complex analytic tools from Section~\ref{Complex_neib_sec} with combinatorial and metric properties of Lorenz maps on the real line, established in Section~\ref{dyn_intervals_sec}.

For a positive integer $n\in\bbN$ and a Lorenz map $f\in\mathcal S_{\mathfrak P}^n$, let the interval $L_n$ be the union $L_n=L_{n+}\cup L_{n-}\cup\{c\}$. We define the set $D_n$ as the hyperbolic neighborhood $D_n=D_\sigma(L_n)\subset\bbC$. 

A key step in the proof of Theorem~\ref{main_theorem} is the following lemma. Its proof will be given later.

\begin{lemma}[\textbf{Main Lemma}]\label{main_main_lemma}
	For any pair of positive numbers $\delta,\Delta>0$ and a finite set $\Theta\subset\mathfrak P$, there exists a constant $B_1>0$, such that for each real number $r>0$ and a positive integer $m\in\bbN$, there exists $n_0=n_0(r,m,\delta,\Delta,\Theta)\in\bbN$ with the property that for all $n\ge n_0$ and $f\in\mathfrak L_r\cap\mathcal S_\Theta^{n+1}$ with real $(\delta,\Delta)$-bounds of level $n$, the inverse maps $(p\cR^n f_\pm)^{-1}$ have well defined univalent analytic extensions to $D_{n-m}\setminus\bbR$, and 
	for all $z\in D_{n-m}\setminus\bbR$ we have
	\begin{equation}\label{main_inequality}
	|(p\cR^n f_\pm)^{-1}(z) - c|\le B_1\frac{|L_{n-m}|^{1/\alpha}}{|C_{n\pm}|^{(1-\alpha)/\alpha}}.
	\end{equation}
\end{lemma}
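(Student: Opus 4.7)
The strategy is to decompose $p\cR^n f_\pm$ through the intermediate renormalization level $n-m$ and invert the factors one at a time, using the complex analytic tools of Section~\ref{Complex_neib_sec}.

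Since $\Theta$ is finite, there exists a constant $K=K(\Theta)$ such that for every $k$ the map $p\cR^{k+1}f_\pm|_{C_{(k+1)\pm}}$ is a composition of at most $K$ branches of $p\cR^k f$. Iterating this observation $m$ times gives a factorization
\[
p\cR^n f_\pm = g_N\circ g_{N-1}\circ\cdots\circ g_1, \qquad N\le K^m,
\]
where each $g_j=p\cR^{n-m}f_{\varepsilon_j}$ for some sign $\varepsilon_j\in\{+,-\}$ dictated by $(\theta(\cR^kf))_{k=n-m}^{n-1}$. Correspondingly $(p\cR^n f_\pm)^{-1}=g_1^{-1}\circ\cdots\circ g_N^{-1}$, and the plan is to apply these inverses in this order, starting from a point of $D_{n-m}\setminus\bbR$.

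For an auxiliary parameter $t\in(0,1)$ to be fixed, Lemma~\ref{pRnf_distorted_root_lemma} identifies each $g_j^{-1}$, up to affine pre- and post-composition with physical scalings of orders $|L_{n-m}|$ and $|C_{(n-m)\varepsilon_j}|$, with a distorted root of degree $\alpha$ of modulus $\mu_0$ on the standard model $D_t((-a,1))$. I would first choose $t$ small enough (depending only on $\delta,\Delta,\Theta$) so that $D_{n-m}\setminus\bbR$ is contained in the physical domain $D_t(L_{(n-m)\varepsilon_N}\cup C_{(n-m)(-\varepsilon_N)})$ of $g_N^{-1}$; this is possible because, by Lemma~\ref{Interval_not_too_short_lemma}, the latter interval occupies a definite fraction of $L_{n-m}$. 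Lemma~\ref{root_lemma_bounded_distortion} then sends $D_t((-a,1))\setminus(-a,0]$ into $D_\sigma((0,c))\cup D_{\tilde t_1}((c,1))$ with $\tilde t_1>0$ controlled only by $t,\mu_0,\delta,\Delta,\Theta$. The combinatorial analysis of Section~\ref{dyn_intervals_sec}, particularly Lemma~\ref{L_n_in_P_n_lemma} applied at level $n-m$, together with the real bounds, guarantees that this image lies inside the domain of $g_{N-1}^{-1}$ along the correct branch, and the procedure iterates through all $N$ steps. Univalence of the whole composition follows from univalence of each factor and from the matching of branches with the real dynamics.

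The bound~(\ref{main_inequality}) is obtained by tracking scales: the singular exponent $1/\alpha$ comes from the $\alpha$-th root inside the first-applied factor $g_N^{-1}$ which acts on the macroscopic input scale $|L_{n-m}|$, while the complementary factor $|C_{n\pm}|^{(\alpha-1)/\alpha}$ arises by combining the affine post-composition scalings of $g_N^{-1}$ with the bounded-distortion actions of the subsequent factors $g_{N-1}^{-1},\dots,g_1^{-1}$, using the real a priori bounds and Lemma~\ref{O_n_Q_n_decrease_lemma}. The main obstacle is controlling the hyperbolic neighborhood parameter uniformly through up to $K^m$ consecutive distorted roots: Lemma~\ref{root_lemma_bounded_distortion} gives only a one-step lower bound, and a naive iteration would lose all control after $K^m$ steps. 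The saving mechanism is that after each distorted root the image acquires a component inside the canonical $\sigma$-hyperbolic neighborhood, which is preserved by the Schwarz Lemma under the remaining conformal factors. Making this precise with the correct branch choices at every step, and ensuring that $B_1$ depends on $\delta,\Delta,\Theta$ only (uniformly in $m$ and $n$), is the technical heart of the argument and combines the combinatorial bookkeeping of Section~\ref{dyn_intervals_sec} with the bounded distortion estimates.
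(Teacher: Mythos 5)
The proposal takes a genuinely different route from the paper, but it leaves exactly the step that matters unproved, and the route it chooses makes that step harder rather than easier.

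\emph{Different decomposition.} The paper does not flatten $p\cR^n f_\pm$ into a single chain of $K^m$ branches of $p\cR^{n-m}f$. It uses a telescopic factorization
$p\cR^n f_+ = f_{n,m}\circ h_{n,m-1}\circ\dots\circ h_{n,0}$
with only $m+1$ factors, where each $h_{n,k}=T^{n-k}f_\pm$ is a bounded-length (depending only on $\Theta$) composition of level-$(n-k-1)$ prerenormalizations. This is what makes the induction manageable: Lemma~\ref{induction_lemma}(ii) shows each $T$ map sends flowers to flowers with uniform parameters, so the number of flower-degradation steps is $m$, not $K^m$. Your flat factorization with $N\le K^m$ forces you to compose exponentially many distorted roots directly; the paper's structure was chosen precisely to avoid that.

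\emph{The missing idea.} You correctly identify the obstacle --- controlling the hyperbolic parameter through many consecutive distorted roots --- and propose that the $D_\sigma$ component produced at each step ``is preserved by the Schwarz Lemma under the remaining conformal factors.'' Two problems. First, the remaining factors in your chain are not conformal on the relevant domains; they are themselves distorted roots, so Schwarz alone does not preserve the $D_\sigma$ petal across them. Second, and more importantly, you say nothing about the part of the image that \emph{escapes} $D_\sigma$ (the middle $D_{\tilde t}$ petal of the flower, whose parameter $\tilde t$ genuinely degrades). In the paper this is not a nuisance to be suppressed --- it is the source of the estimate. Proposition~\ref{mainlemma_prop} shows that whenever the inverse orbit lands \emph{outside} $D_\sigma$ of an intermediate $L_{n,k}$, its diameter is already $O(|L_{n,k}|)$ by Proposition~\ref{diam_bound_prop}, and pushing it forward under $f_+\circ f_{n,k}$ with Koebe control gives a ball of radius $O(|L_{n-m}||f(C_{n+})|/|C_{n+}|)$ around $f_+(c)$; this is the inequality~(\ref{Lnm_inequality}) that, after one application of $f_+^{-1}$, yields~(\ref{main_inequality}). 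The finite induction in the paper alternates between Lemma~\ref{induction_lemma} (stay inside $D_\sigma$: keep going) and Proposition~\ref{mainlemma_prop} (escape $D_\sigma$: done). Your outline contains no analogue of the ``escape implies done'' branch, so it cannot close the estimate.

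\emph{A secondary inaccuracy.} You attribute the exponent $1/\alpha$ to the first-applied factor $g_N^{-1}$ acting on the macroscopic scale $|L_{n-m}|$. In the paper the $1/\alpha$ comes from the \emph{last} map: the entire estimate~(\ref{Lnm_inequality}) is phrased in terms of $f_+\circ(p\cR^nf_+)^{-1}$, whose image is a ball around $f_+(c)$ of radius $\sim|L_{n-m}|\,|C_{n+}|^{\alpha-1}$, and the single final inversion $f_+^{-1}$ (a root of degree $\alpha$) turns this into $|L_{n-m}|^{1/\alpha}|C_{n+}|^{(\alpha-1)/\alpha}$. Tracking the $1/\alpha$ through the first factor instead would require showing that the subsequent $N-1$ distorted roots contribute only bounded distortion, which is false in general and is exactly the point you flagged as unresolved.

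In short: the combinatorial factorization is valid, the general intuition about flowers and $D_\sigma$ preservation points in the right direction, but the argument as written omits the central quantitative mechanism (handling the escape from $D_\sigma$), and the flat decomposition you chose makes supplying it substantially harder than in the paper's telescopic setup.
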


\subsection{Flowers}

We fix a positive real number $\sigma<\cot\left(\frac{\pi}{2\alpha}\right)$ that remains unchanged until the end of the paper.

\begin{definition}
	A set $F\subset\bbC$ is a flower of an interval $(a,b)\subset\bbR$, if there exist real numbers $d,e,t$, such that $a<d<e<b$ and $t>0$, and
	\begin{equation}\label{Flower_form_eq}
	F=D_\sigma((a,d))\bigcup D_t((d,e))\bigcup D_\sigma((e,b)).
	\end{equation}
	For a real number $K>0$ we say that a flower $F$ is $K$-bounded, if
	$$
	\frac{d-a}{b-a}\ge K \qquad\text{and}\qquad \frac{b-e}{b-a}\ge K.
	$$
	The real number $t$ will be called the parameter of the flower $F$.
\end{definition}


\begin{figure}[ht]
\centering
\includegraphics[height=5cm]{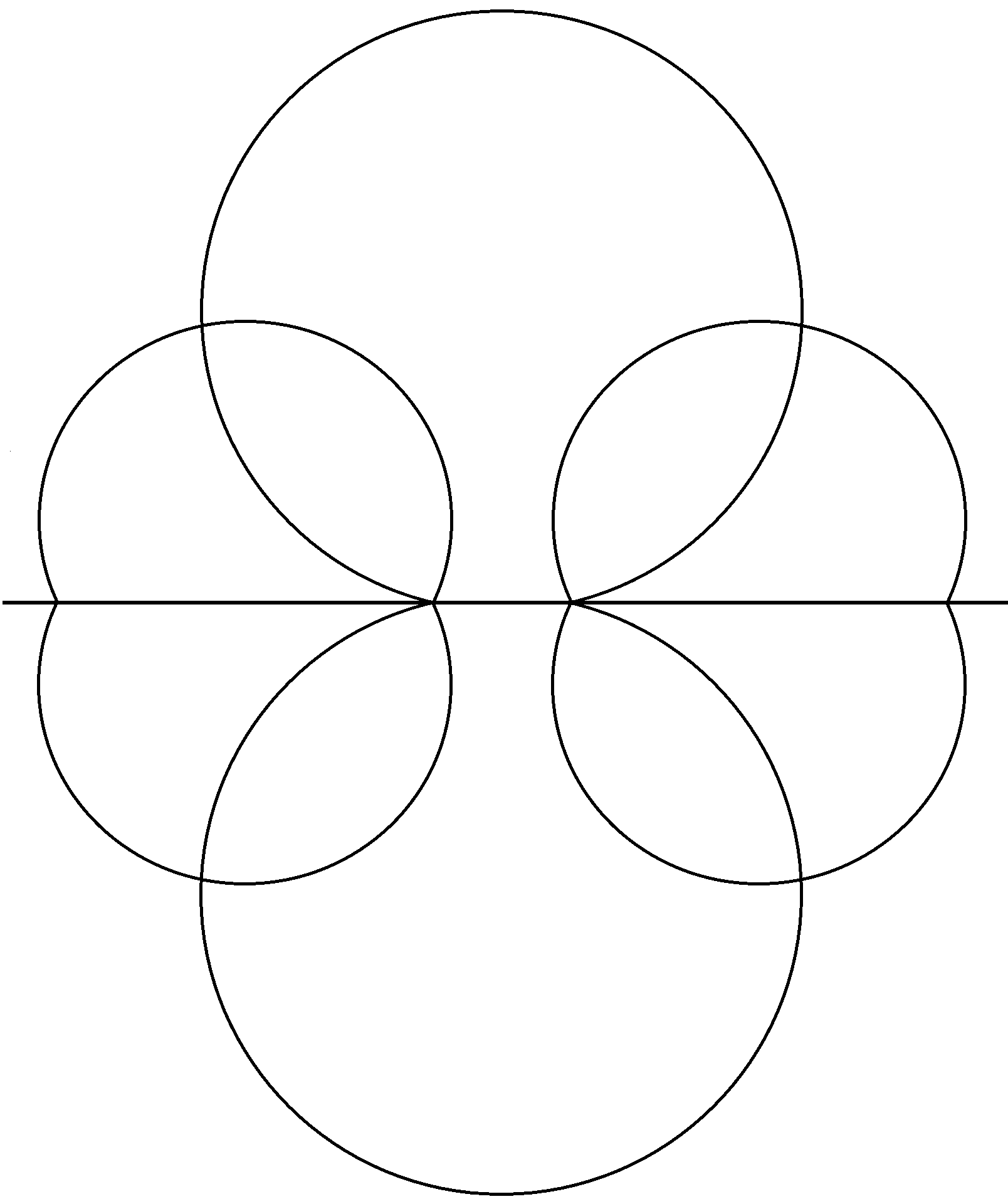} 
\put(-121,61){\small $a$}
\put(-74,61){\small $d$}
\put(-51,61){\small $e$}
\put(-3,61){\small $b$}
\put(-10,96){\footnotesize $D_\sigma((e,b))$}
\put(-155,96){\footnotesize $D_\sigma((a,d))$}
\put(-83,116){\footnotesize $D_t((d,e))$}
\caption{A flower.}
\end{figure}

\begin{proposition}\label{diam_bound_prop}
	For any $K, t>0$, there exists a real number $b=b(K,t)>0$, such that if $F$ is a $K$-bounded flower of an interval $I$ with parameter $t$, then for any subinterval $J\subset I$, there exists $\hat t>0$, such that $F\setminus D_\sigma(I)\subset D_{\hat t}(J)$, and $\diam[D_{\hat t}(J)]<b|I|$.
\end{proposition}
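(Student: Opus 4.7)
The plan is to first reduce $F \setminus D_\sigma(I)$ to a single central piece, then establish two complementary geometric estimates on it, and finally bound viewing angles of subintervals.

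I would start by observing that the two side petals of the flower contribute nothing outside $D_\sigma(I)$. By monotonicity of angular neighborhoods under inclusion of intervals (any point viewing a subinterval at angle $\geq 2\arctan\sigma$ views the containing interval at at least that angle), one has $D_\sigma((a,d)), D_\sigma((e,b)) \subset D_\sigma(I)$ since $(a,d), (e,b) \subset I$. Consequently
\[
F \setminus D_\sigma(I) = D_t((d,e)) \setminus D_\sigma(I),
\]
and the rest of the argument concerns this set.

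Next I would establish two complementary geometric bounds for points $z$ in this set. The upper bound is a direct calculation from the explicit circular-arc description of $D_t((d,e))$, whose apex in the upper half-plane lies at height $|(d,e)|/(2t)$: every such $z$ satisfies $|z-w| \leq C_1(t)\,|I|$ for all $w \in I$. The essential ingredient is the complementary lower bound
\[
\Im z \geq c_1(K,t)\,|I|,
\]
and I expect this to be the main obstacle of the proof. The intuition is that $K$-boundedness keeps $(d,e)$ separated from the endpoints of $I$ by a definite amount $K|I|$, so falling outside the lens $D_\sigma(I)$ while remaining inside $D_t((d,e))$ cannot be accomplished by sliding down toward a corner of $D_\sigma(I)$ at $a$ or $b$; instead $z$ must rise above the roof of $D_\sigma(I)$. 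I would prove this rigorously by rescaling $|I|$ to $1$ and invoking compactness: the set of admissible $K$-bounded parameter pairs $(d,e)$ is compact, the closed set $\overline{D_t((d,e))} \setminus \mathrm{int}\,D_\sigma(I)$ varies continuously with those parameters and is disjoint from the compact $\overline{I}$ (as $\sigma$ is a fixed constant throughout the paper), so its minimum imaginary part is attained and positive. A more direct computation of the intersection of the boundary circles $\partial D_t((d,e))$ and $\partial D_\sigma(I)$ would yield an explicit value.

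With both bounds in hand, the angle computation is routine. For any subinterval $J = (\alpha,\beta) \subset I$ and any $z \in F \setminus D_\sigma(I)$, the elementary identity
\[
\sin\theta(z,J) = \frac{|J|\,\Im z}{|z-\alpha|\,|z-\beta|}
\]
combined with $|z-\alpha|, |z-\beta| \leq C_1(t)\,|I|$ and $\Im z \geq c_1(K,t)\,|I|$ yields a uniform lower bound $\theta(z,J) \geq c_2(K,t)\,|J|/|I|$. Taking $\hat t$ to be the tangent of half of this angle gives $F \setminus D_\sigma(I) \subset D_{\hat t}(J)$ by construction, and reading off the explicit shape of $D_{\hat t}(J)$ yields $\diam D_{\hat t}(J) \leq |J|/\min(1,\hat t)$; since $\hat t$ is of order $|J|/|I|$, this is bounded by a constant $b = b(K,t)$ times $|I|$, completing the proof.
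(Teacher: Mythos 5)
Your proof is correct and essentially identical to the paper's: both work with the triangle whose vertices are $z$ and the endpoints $\alpha,\beta$ of $J$, and your pair of estimates $\Im z\ge c_1(K,t)|I|$ and $|z-\alpha|,|z-\beta|\le C_1(t)|I|$ is precisely equivalent (via $\sin(\angle\alpha)=\Im z/|z-\alpha|$) to the paper's assertion that the two base angles lie in $(\epsilon,\pi-\epsilon)$ while the two sides from $z$ have length $\le b_1(t)|I|$, after which both arguments read off the circumdiameter of this triangle from the law of sines. The one place you add genuine content is the rescaling-and-compactness argument for the lower bound $\Im z\ge c_1(K,t)|I|$, which the paper merely asserts; your outline of it is sound, and you have correctly flagged it as the substantive step.
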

\begin{proof}
	For any point $z\in F\setminus D_\sigma(I)$, consider the triangle with side $J$ and the opposite vertex at $z$. The lengths of other sides of this triangle are smaller than $b_1|I|$, for some constant $b_1=b_1(t)>0$, and the angles $\alpha,\beta$ at the opposite vertices satisfy the inequality $\epsilon<\alpha,\beta<\pi-\epsilon$, for some $\epsilon=\epsilon(K,t)>0$. Hence, the diameter of the circumscribed circle of the considered triangle is less than $b_1|I|/\sin\epsilon$. Now the proposition follows.
\end{proof}

Let $f$ be a Lorenz map with the critical point $c\in\bbR$. For real numbers $K_1,K_2>0$, we say that a flower $F$ of an interval $(a,b)$, defined as in~(\ref{Flower_form_eq}), is $(K_1,K_2)$-bounded, if either $a=c$ and the inequalities 
$$
\frac{d-a}{b-a}\ge K_1, \qquad\text{}\qquad \frac{b-e}{b-a}\ge K_2
$$
hold, or if $b=c$ and the inequalities 
$$
\frac{d-a}{b-a}\ge K_2, \qquad\text{}\qquad \frac{b-e}{b-a}\ge K_1
$$
hold. We will say that $K_1$ is the \textit{critical bound} and $K_2$ is the \textit{non-critical bound}.

We note that if $f\in\mathcal S_{\mathfrak P}^{n}$, then the corresponding intervals $L_{n+}$ and $L_{n-}$ are defined and have the critical point $c$ as one of their boundary points. Hence, one can consider $(K_1,K_2)$-bounded flowers of these intervals.

For $n\in\bbN$ and $f\in\mathcal S_{\mathfrak P}^n$, let $T^nf_-\colon C_{n-}\to C_{n-1}$ and $T^nf_+\colon C_{n+}\to C_{n-1}$ be the maps, such that
\begin{equation}\label{T_n_def_eq}
p\cR^{n} f_\pm= p\cR^{n-1}f\circ T^{n} f_\pm.
\end{equation}
In particular, $T^n f_\pm$ is a finite composition of prerenormalizations $p\cR^{n-1}f$.

For $n\in\bbN$ and $f\in\mathcal S_{\mathfrak P}^{n+1}$, let the maps $f_{n\pm}$ be defined by 
$$
f_{n\pm}=p\cR^{n+1} f_\pm \circ (T^{n+1} f_\pm)^{-1}.
$$
Then each $f_{n\pm}$ is either $p\cR^{n}f_-$ or $p\cR^{n}f_+$, depending on the combinatorics of the map $f$, and 
$$
p\cR^{n+1} f_\pm= f_{n\pm}\circ T^{n+1} f_\pm.
$$
Let the intervals $L_{n\pm}^f$ be defined by
\begin{equation*}
L_{n\pm}^f=\begin{cases}
       L_{n-}, & \text{if } f_{n\pm}= p\cR^{n}f_- \\
       L_{n+}, & \text{if } f_{n\pm}= p\cR^{n}f_+. 
      \end{cases}
\end{equation*}

Recall that for a positive integer $n\in\bbN$ and a Lorenz map $f\in\mathcal S_{\mathfrak P}^n$, we have $L_n:=L_{n+}\cup L_{n-}\cup\{c\}$ and $D_n:=D_\sigma(L_n)$. The following lemma provides the induction step in the proof of Lemma~\ref{main_main_lemma}.

\begin{lemma}\label{induction_lemma}
For any pair of positive numbers $\delta,\Delta>0$ and a finite set $\Theta\subset\mathfrak P$, there exist 
real constants $\tl t,K_1,K_2>0$, such that 
for every real number $r>0$, there exists $n_1=n_1(r,\delta,\Delta,\Theta)\in\bbN$ with the property that for all $n\ge n_1$ and $f\in\mathfrak L_r\cap\mathcal S_\Theta^{n+1}$ with real $(\delta,\Delta)$-bounds of level $n$, 
the following holds:

(i) The inverse maps $(p\cR^n f_\pm)^{-1}$ 
have well defined univalent analytic extensions to $D_{n}\setminus\bbR$, and the preimage 
$(p\cR^n f_\pm)^{-1}(D_{n}\setminus\bbR)$ is contained in a $(K_1,K_2)$-bounded flower of the interval $L_{n\pm}$ with parameter~$\tl t$. 

(ii) Let $F$ be any $(K_1,K_2)$-bounded flower of the interval $L_{n\pm}^f$ with parameter~$\tl t$. 
Then the inverse map $(T^{n+1} f_\pm)^{-1}$ has a well defined univalent analytic extension to $D_\sigma(L_{n\pm}^f)\cap F\setminus \bbR$, and the preimage $(T^{n+1} f_\pm)^{-1}(D_\sigma(L_{n\pm}^f)\cap F\setminus \bbR)$ is contained in a $(K_1,K_2)$-bounded flower of the interval $L_{n+1\pm}$ with parameter~$\tl t$. 
\end{lemma}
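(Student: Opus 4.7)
I would prove parts~(i) and~(ii) in sequence; together they form the inductive step behind Lemma~\ref{main_main_lemma}, and the choice of constants $\tl t, K_1, K_2$ is self-consistent precisely because the output flower of part~(ii) at level $n$ supplies the input flower needed by part~(i) at level $n+1$. Both parts rely on three tools from Section~\ref{Complex_neib_sec}: the Schwarz lemmas (Lemmas~\ref{Schwarz_lemma} and~\ref{Generalized_S_lemma}), the distorted-root estimate (Lemma~\ref{root_lemma_bounded_distortion}), and the identification of $(p\cR^n f_\pm)^{-1}$ as a distorted root (Lemma~\ref{pRnf_distorted_root_lemma}), together with the real-bounds geometry from Section~\ref{dyn_intervals_sec}.

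For part~(i), I would first invoke Lemma~\ref{pRnf_distorted_root_lemma} with an appropriate $t\in(0,1)$ to realize $(p\cR^n f_\pm)^{-1}$ as a distorted root of degree $\alpha$ on $D_t(L_{n\pm}\cup C_{n\mp})$ with modulus depending only on $\delta,\Delta,\Theta,t$. The ratios $|C_{n\mp}|/|L_{n\mp}|$ are bounded below uniformly by Lemma~\ref{Interval_not_too_short_lemma} combined with Lemma~\ref{L_n_in_P_n_lemma}, so picking $t$ large enough (but below $1$) yields $D_\sigma(L_n)\subset D_t(L_{n\pm}\cup C_{n\mp})$; the inverse thus extends across $D_n\setminus\bbR$. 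After the natural affine normalization sending $c$ to $0$ and the appropriate endpoint of $L_{n\pm}$ to $1$, Lemma~\ref{root_lemma_bounded_distortion} confines the image to $D_\sigma((0,c'))\cup D_{\tl t}((c',1))$ for some $c'\in(0,1)$ and some $\tl t$ depending only on $\delta,\Delta,\Theta$. To upgrade this into a $(K_1,K_2)$-bounded flower of $L_{n\pm}$ (whose definition also demands a $D_\sigma$ tip at the \emph{far} endpoint), I would slice off a subinterval $(e,1)$ at that endpoint and show directly that the part of $D_{\tl t}((c',1))$ with $\Re z>e$ is contained in $D_\sigma((e,1))$: on this portion there is no root singularity, the conformal factor $g$ in the distorted-root representation has Koebe-controlled distortion (via the modulus $\mu_0$), and the size of $(e,1)$ is fixed by the real bounds so that $K_1$ and $K_2$ come out uniform in $n$.

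For part~(ii), the key point is that the composition $T^{n+1}f_\pm$ decomposes into finitely many factors of the form $p\cR^n f_\star$, their number bounded by a constant depending only on $\Theta$, so $(T^{n+1}f_\pm)^{-1}$ decomposes into the same bounded number of inverse prerenormalizations. I would peel these off one at a time and apply part~(i) to each: the input to each factor is a $(K_1,K_2)$-bounded flower of parameter $\tl t$ intersected with $D_\sigma(L_{n\star})$ (the intersection ensures we stay inside $D_n\setminus\bbR$, the domain where part~(i) at level $n$ applies), and the output is again a $(K_1,K_2)$-bounded flower of the next $L_{n\star'}$ with parameter $\tl t$. After all factors, the image lies in a $(K_1,K_2)$-bounded flower of $L_{n+1\pm}$ as required. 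The main obstacle is the self-consistency of the constants: $\tl t, K_1, K_2$ must simultaneously be the flower parameters output by part~(i), accepted as input by part~(ii), and reproduced as the output of part~(ii). This forces a fixed-point-type argument in parameter space whose feasibility rests on the uniform modulus $\mu_0$ of the distorted root, the bounded number of factors in $T^{n+1}f_\pm$, and the Koebe distortion bounds along the composition.
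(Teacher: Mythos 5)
Your proposal follows a genuinely different route from the paper's for part~(i). The paper decomposes $(p\cR^n f_\pm)^{-1}$ into at most $k=k(\Theta)$ inverse branches of the prerenormalizations at level $n-1$ and applies Lemma~\ref{root_lemma_bounded_distortion} $k$ times, so that the critical ``$D_\sigma$'' bumps at \emph{both} endpoints of $L_{n\pm}$ arise from root singularities encountered along the chain (the non-critical endpoint of $L_{n\pm}$ is a preimage of $c$ at some intermediate level). You instead apply Lemma~\ref{pRnf_distorted_root_lemma} once, at level $n$, and Lemma~\ref{root_lemma_bounded_distortion} once. This can be made to work, but two of your steps are incorrect as stated.

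First, the claim that picking $t$ ``large enough'' gives $D_\sigma(L_n)\subset D_t(L_{n\pm}\cup C_{n\mp})$ has the monotonicity backwards. Since $C_{n\mp}\subset L_{n\mp}$, we have $L_{n\pm}\cup C_{n\mp}\subsetneq L_n$, so a point in $D_\sigma(L_n)$ views the \emph{smaller} interval $L_{n\pm}\cup C_{n\mp}$ under a \emph{smaller} angle, and the inclusion holds only when $t$ is chosen \emph{small} enough, with the admissible range controlled by the commensurability provided by Lemmas~\ref{Interval_not_too_short_lemma} and~\ref{L_n_in_P_n_lemma}.

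Second, the slicing argument for the far tip of the flower does not work. Near the endpoint corresponding to $1$ in Lemma~\ref{root_lemma_bounded_distortion}, the representation $g\circ\phi_\alpha\circ h$ is conformal, so it preserves the angle at which the boundary arc meets the real line; thus the part of the output $D_{\tilde t}((c',1))$ near $1$ cannot be forced into $D_\sigma((e,1))$ when $\tilde t<\sigma$. The correct reason the output sits inside a genuine $(K_1,K_2)$-bounded flower of $L_{n\pm}$ is compactness, not angle comparison: after undoing the affine normalizations, the point $1$ corresponds to $(p\cR^n f_\pm)^{-1}$ of the far endpoint of $L_{n\pm}\cup C_{n\mp}$, which by Lemma~\ref{L_n_combinatorial_lemma} (the inclusion $L_{n\pm}\Subset p\cR^n f_\pm(L_{n\pm})$, quantified by Lemma~\ref{Interval_not_too_short_lemma}) lies a definite proportion \emph{inside} $L_{n\pm}$. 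So the two-piece output is already compactly contained in a definite subinterval of $L_{n\pm}$, and one adjoins the third $D_\sigma$ piece at the far end to complete the flower; the non-critical bound $K_2$ is exactly this gap. For part~(ii), your plan of peeling off at most $B(\Theta)$ factors $(p\cR^n f_\star)^{-1}$ and re-applying part~(i), intersecting with $D_\sigma(L_{n\star})$ to stay inside $D_n$, matches the paper's strategy (the paper merely pushes the decomposition one more level to $n-1$), and your worry about self-consistency of $(\tilde t, K_1, K_2)$ is legitimate but does not need a fixed-point argument: as the paper remarks, since the number of factors is bounded by $\Theta$, one simply takes the minimum of $\tilde t$ and $K_2$ over these finitely many steps, while $K_1$ is regenerated at each step by the root singularity.
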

\begin{proof}
In order to prove part (i), we notice that due to finiteness of the set $\Theta$, the map $(p\cR^nf_\pm)^{-1}$ is a composition of no more than $k$ inverse prerenormalizations $(p\cR^{n-1}f_-)^{-1}$ and $(p\cR^{n-1}f_+)^{-1}$, where $k$ depends only on $\Theta$. According to Lemma~\ref{pRnf_distorted_root_lemma}, these inverse prerenormalizations are distorted roots of degree~$\alpha$ on hyperbolic neighborhoods $D_t(C_{n-1})$ of $C_{n-1}$, for all sufficiently large $n$. Then part~(i) of Lemma~\ref{induction_lemma} will follow from applying Lemma~\ref{root_lemma_bounded_distortion} $k$ times.

The proof of part~(ii) of Lemma~\ref{induction_lemma} is based on the idea that according to~(\ref{T_n_def_eq}), the map $(T^{n+1}f_\pm)^{-1}$ is a finite composition of inverse branches of prerenormalizations $(p\cR^{n}f_-)^{-1}$ and $(p\cR^{n}f_+)^{-1}$. Since the set of combinatorics $\Theta$ is finite, the number of these maps in the composition is less than a constant $B=B(\Theta)>0$, hence $(T^{n+1}f_\pm)^{-1}$ is a finite composition of at most $kB$ inverse branches of prerenormalizations $(p\cR^{n-1}f_-)^{-1}$ and $(p\cR^{n-1}f_+)^{-1}$. 
The rest of the proof is left to the reader as it is analogous to the proof of part~(i) of Lemma~\ref{induction_lemma}. (The constant $\tilde t$ and the non-critical bound $K_2$ might have to be further decreased.)
\end{proof}

\begin{proof}[Proof of Lemma~\ref{main_main_lemma}]
We will give a proof for the case of positive branches $(p\cR^n f_+)^{-1}$. The case of negative branches $(p\cR^n f_-)^{-1}$ is analogous.

We fix $r$, $m$, $\delta$, $\Delta$, $\Theta$. As a first step, we will prove that there exists a real number $B_3=B_3(\delta, \Delta,\Theta)>0$, such that for any $z\in D_{n-m}\setminus\bbR$, the point $f_+\circ (p\cR^n f_+)^{-1} (z)$ is defined and satisfies the inequality
\begin{equation}\label{Lnm_inequality}
|f_+\circ (p\cR^n f_+)^{-1} (z) -f_+(c)| \le B_3 \frac{|L_{n-m}||f(C_{n+})|}{|C_{n+}|}.
\end{equation}

It follows from~(\ref{T_n_def_eq}) that for every $n\ge m$ and every $f\in\mathcal S_\Theta^n$, the prerenormalization $p\cR^nf_+$ can be represented in a unique way as a composition
$$
p\cR^n f_+= f_{n,m}\circ h_{n,m-1}\circ h_{n,m-2}\circ\dots\circ h_{n,0},
$$
where $f_{n,m}$ is either $p\cR^{n-m}f_+$ or $p\cR^{n-m}f_-$ and for each $k=0,\dots,m-1$, the map $h_{n,k}$ is either $T^{n-k}f_+$ or $T^{n-k}f_-$.
Furthermore,~(\ref{T_n_def_eq}) implies that for every $k=0,\dots, m-1$, we have
$$
f_{n,k}=f_{n,m}\circ h_{n,m-1}\circ\dots\circ h_{n,k},
$$
and $f_{n,k}$ is either $p\cR^{n-k}f_+$ or $p\cR^{n-k}f_-$.
For $k=0,\dots, m$, let the intervals $L_{n,k}$ be defined by  
\begin{equation*}
L_{n,k}=\begin{cases}
       L_{(n-k)-}, & \text{if } f_{n,k}= p\cR^{n-k}f_- \\
       L_{(n-k)+}, & \text{if } f_{n,k}= p\cR^{n-k}f_+. 
      \end{cases}
\end{equation*}

\begin{proposition}\label{mainlemma_prop}
Let $r$, $m$, $\delta$, $\Delta$ and $\Theta$ be the same as in Lemma~\ref{main_main_lemma}. Then for any pair of real numbers $K,t>0$, there exists a positive integer $n_2=n_2(r,\delta,\Delta,\Theta, m, K, t)$ and a positive real number $B=B(\delta,\Delta,\Theta,K,t)$, such that for any $n\ge n_2$ and $f\in\mathfrak L_r\cap\mathcal S_\Theta^{n+1}$ with real $(\delta,\Delta)$-bounds of level $n$, the following holds: for any $k=0,\dots,m$, if $F$ is a $K$-bounded flower on $L_{n,k}$ with parameter $t$, and 
$z\in\bbC$ is such that $f_{n,k}^{-1}(z)$ is defined and either $k\ge 1$ and $f_{n,k}^{-1}(z)\in F\setminus D_\sigma(L_{n,k})$ or $k=0$ and $f_{n,k}^{-1}(z)\in F$, then $f_+\circ (p\cR^n f_+)^{-1} (z)$ is defined and
$$
|f_+\circ (p\cR^n f_+)^{-1} (z) -f_+(c)| \le B \frac{|L_{n-m}||f(C_{n+})|}{|C_{n+}|}.
$$
\end{proposition}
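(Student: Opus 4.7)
The plan is to exploit a critical cancellation in the composition $f_+\circ(p\cR^nf_+)^{-1}$. Since $p\cR^nf_+$ begins with the critical branch $f_+$ applied to $C_{n+}$, we may factor $p\cR^nf_+=g\circ f_+$, where $g$ is the composition of the remaining, non-critical iterates of $f$ along the orbit of $f(C_{n+})$ until its return to $C_n$. Consequently $f_+\circ(p\cR^nf_+)^{-1}=g^{-1}$ is a diffeomorphism, and the desired bound becomes a statement about the range of $g^{-1}$ in a hyperbolic neighborhood of $f(C_{n+})$, near its boundary point $f_+(c)$.

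First I would extend $g^{-1}$ univalently to a complex domain. The finiteness of $\Theta$ bounds the number of factors in $g$ uniformly, part~\ref{L_n_in_P_n_item1b} of Lemma~\ref{Q_n_orbit_lemma} bounds the total length of the orbit by a universal constant, and Lemma~\ref{O_n_Q_n_decrease_lemma} guarantees individual interval lengths decay geometrically. These hypotheses make Lemma~\ref{definite_neighb_lemma} applicable to $g$, giving $g\colon D_t(f(C_{n+}))\to D_{t-\Delta t}(p\cR^nf_+(C_{n+}))$ and hence $g^{-1}$ univalent on an appropriate hyperbolic neighborhood $\Omega$ of $p\cR^nf_+(C_{n+})$, with image contained in $D_t(f(C_{n+}))$.

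Next I would run an induction on $k$ whose sole purpose is to verify that $z$ lies in $\Omega$ for each admissible hypothesis. For $k=0$, Proposition~\ref{diam_bound_prop} yields $|w_0-c|\lesssim|L_{n+}|$ for $w_0:=(p\cR^nf_+)^{-1}(z)\in F$; combined with the power-law shape of $p\cR^nf_+$ near $c$, this places $z=p\cR^nf_+(w_0)$ inside $\Omega$. For $k\ge 1$, part~(ii) of Lemma~\ref{induction_lemma} is applied iteratively on the base regions $D_\sigma(L_{n,k})\cap F$, reducing the situation to the $k=0$ case. The petal regions $F\setminus D_\sigma(L_{n,k})$ are handled directly by showing that, because the flowers are $(K_1,K_2)$-bounded with non-critical bound $K_2$ sufficiently large, the image $z=f_{n,k}(w_k)$ still lies in $\Omega$. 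The finiteness of $\Theta$, together with the geometric decay of Lemma~\ref{O_n_Q_n_decrease_lemma}, allows one to choose the constants $K_1,K_2,\tilde t$ uniformly in $n$ and $f$.

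Once $z\in\Omega$ is established, the bound is immediate: $g^{-1}(z)\in D_t(f(C_{n+}))$, whose diameter is $\lesssim|f(C_{n+})|$, and $f_+(c)\in\partial f(C_{n+})$. Hence
$$
|f_+\circ(p\cR^nf_+)^{-1}(z)-f_+(c)|=|g^{-1}(z)-f_+(c)|\lesssim|f(C_{n+})|\le|L_{n-m}|\cdot\frac{|f(C_{n+})|}{|C_{n+}|},
$$
where the last step uses $|C_{n+}|\le|L_{n-m}|$ (since $L_{n-m}\supset C_n\supset C_{n+}$). The main obstacle will be the induction step for the petal regions: it requires a careful choice of flower parameters and a quantitative matching between the hyperbolic-neighborhood shapes at different renormalization levels, uniformly in $f$ and with constants depending only on $\delta,\Delta,\Theta,K,t$.
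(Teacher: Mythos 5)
The overall plan—factor $f_+\circ(p\cR^nf_+)^{-1}=g^{-1}$, show that $f_{n,k}^{-1}(z)$ lives in a controlled hyperbolic neighborhood of a suitable subinterval of $L_{n,k}$, and push forward by Lemma~\ref{definite_neighb_lemma}—is the right one, and matches the paper in spirit. However there are two substantive problems.

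First, the structure of the argument is confused with that of the Main Lemma. You write ``For $k\ge 1$, part~(ii) of Lemma~\ref{induction_lemma} is applied iteratively on the base regions $D_\sigma(L_{n,k})\cap F$, reducing the situation to the $k=0$ case.'' But the hypothesis of Proposition~\ref{mainlemma_prop} in the case $k\ge 1$ is precisely that $f_{n,k}^{-1}(z)$ lies in the \emph{petal} $F\setminus D_\sigma(L_{n,k})$, not in the base; the descent through the base via Lemma~\ref{induction_lemma} belongs to the proof of Lemma~\ref{main_main_lemma}, which invokes this proposition as a one-step estimate. Here each $k$ must be handled directly, with no induction. The petal case cannot be ``reduced to $k=0$'': the whole point is to bound the image of the petal without further descent.

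Second, and more seriously, the final estimate is not ``immediate.'' You conclude $|g^{-1}(z)-f_+(c)|\lesssim|f(C_{n+})|$ from the inclusion $g^{-1}(z)\in D_{\hat t'}(f(C_{n+}))$, but the implied constant here is essentially $\diam\bigl[D_{\hat t'}(f(C_{n+}))\bigr]/|f(C_{n+})|$, which blows up as $\hat t'\to 0$. The parameter $\hat t'$ comes, via Lemma~\ref{definite_neighb_lemma}, from the $\hat t$ produced by Proposition~\ref{diam_bound_prop}, and $\hat t$ is only bounded below in terms of the ratio $|f_{n,k}^{-1}(p\cR^nf(C_{n+}))|/|L_{n,k}|\gtrsim\mu^m$; so the constant you suppress depends on $m$, whereas the statement requires $B=B(\delta,\Delta,\Theta,K,t)$ independent of $m$. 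The paper avoids this by never estimating $\diam[D_{\hat t'}(\cdot)]$ as a function of $\hat t'$: instead it uses the scale-invariance of the ratio $\diam[D_s(J)]/|J|$ to transfer the estimate to the \emph{source} interval $J=f_{n,k}^{-1}(p\cR^nf(C_{n+}))$, where Proposition~\ref{diam_bound_prop} gives $\diam[D_{\hat t}(J)]<B_4(K,t)\,|L_{n,k}|$ with a constant independent of $\hat t$, and Lemma~\ref{derivative_bounds_lemma} gives $|J|>B_5|C_{n+}|$. This produces the ratio $|L_{n,k}|/|C_{n+}|\le|L_{n-m}|/|C_{n+}|$ explicitly, with a constant $4B_4/B_5$ that depends only on $(\delta,\Delta,\Theta,K,t)$. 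Your version merely inserts a trivial $|C_{n+}|\le|L_{n-m}|$ at the end, which does not absorb the hidden $m$-dependence. This is the key quantitative trick you miss.

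A minor point: the claim that ``the finiteness of $\Theta$ bounds the number of factors in $g$ uniformly'' is false ($m_n^+-1$ grows without bound as $n\to\infty$); it is harmless here because Lemma~\ref{definite_neighb_lemma} needs only the total length and maximal length of the intervals, not their number, but the stated reason is wrong.
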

\begin{proof}
First, consider the case $k\ge 1$. Then $f_{n,k}^{-1}(z)\in F\setminus D_\sigma(L_{n,k})$ and according to Proposition~\ref{diam_bound_prop}, there exists $0<\hat t<1$, such that $f_{n,k}^{-1}(z)$ is contained in the hyperbolic neighborhood 
$$
D_{\hat t}(f_{n,k}^{-1}(p\cR^nf(C_{n+})))
$$
whose diameter is less than $B_4|L_{n,k}|$, for some constant $B_4=B_4(K,t)>0$. 
Since $k\le m$, Lemma~\ref{Interval_not_too_short_lemma} implies that 
there exists a real constant $\mu=\mu(\delta,\Delta,\Theta)>0$, such that 
$$
|p\cR^nf(C_{n+})|>|L_{n,k}|\mu^{k}\ge |L_{n,k}|\mu^{m}.
$$
Applying Lemma~\ref{derivative_bounds_lemma} to the map $f_{n,k}$, we obtain that
$$
|f_{n,k}^{-1}(p\cR^nf(C_{n+}))|> B_5|p\cR^nf(C_{n+})| > B_5 |L_{n,k}|\mu^{m},
$$
for some constant $B_5=B_5(\delta,\Delta,\Theta)>0$.
The latter implies that the parameter $\hat t>0$ is bounded away from zero uniformly with respect to the choice of $f$ and $z$. At the same time, Lemma~\ref{O_n_Q_n_decrease_lemma} implies that the lengths of all intervals from the orbits of $C_{n+}$ and $C_{n-}$ converge to zero as $n\to\infty$, so according to Lemma~\ref{definite_neighb_lemma},
there exists a positive integer $n_2=n_2(r,\delta,\Delta,\Theta, m, K, t)$ 
such that if $n\ge n_2$, then 
$$
f_+\circ (p\cR^n f_+)^{-1} (z)\in D_{\hat t/2}(f(C_{n+})),
$$
which implies that
$$
\frac{|f_+\circ (p\cR^n f_+)^{-1} (z) -f_+(c)|}{|f(C_{n+})|}\le
\frac{\diam [D_{\hat t/2}(f_{n,k}^{-1}(p\cR^nf(C_{n+})))]}{|f_{n,k}^{-1}(p\cR^nf(C_{n+}))|}.
$$
We note that 
$$
\diam [D_{\hat t/2}(f_{n,k}^{-1}(p\cR^nf(C_{n+})))]\le 4 \diam [D_{\hat t}(f_{n,k}^{-1}(p\cR^nf(C_{n+})))]\le 4B_4|L_{n,k}|,
$$
and since
$$
|f_{n,k}^{-1}(p\cR^nf(C_{n+}))|> B_5|p\cR^nf(C_{n+})| > B_5 |C_{n+}|,
$$
there exists a real number $B=B(\delta,\Delta,\Theta,K,t)>0$, such that 
\begin{equation*}
\frac{|f_+\circ (p\cR^n f_+)^{-1} (z) -f_+(c)|}{|f(C_{n+})|}\le
\frac{B|L_{n,k}|}{|C_{n+}|}\le \frac{B|L_{n-m}|}{|C_{n+}|},
\end{equation*}
which completes the proof of Proposition~\ref{mainlemma_prop} in case of $k\ge 1$.

If $k=0$, then $f_{n,k}=p\cR^n f_+$ and $L_{n,k}=L_{n+}$. Since $(p\cR^n f_+)^{-1}(z)$ belongs to a flower $F$ on $L_{n+}$ with parameter $t$, and $|L_{n+}|$ is commensurable with $|C_{n+}|$ (c.f. Lemma~\ref{Interval_not_too_short_lemma}), there exists a constant $R=R(\delta,\Delta,\Theta, t)>0$, such that
$$
|(p\cR^nf_+)^{-1}(z)-c|\le R|C_{n+}|.
$$

Due to condition~\ref{L_r_def_item2} of Definition~\ref{L_r_def}, and the fact that the lengths of the intervals $L_{n+}$ converge to zero uniformly in $f$ as $n\to\infty$ (c.f. Lemma~\ref{O_n_Q_n_decrease_lemma}), we may assume without loss of generality that $n_2$ is large enough, so that if $n\ge n_2$, then the map $f_+$ is the power map $z\mapsto z^\alpha$ on $F$ up to a precomposition and a postcomposition with conformal maps of bounded distortion. Hence, there exists a constant $\tl R>0$, such that 
$$
|f_+\circ(p\cR^nf_+)^{-1}(z)-f_+(c)|\le \tl R|f_+(C_{n+})|\le \tl R \frac{|L_{n-m}||f(C_{n+})|}{|C_{n+}|}.
$$
Without loss of generality we may assume that $B\ge\tl R$, which completes the proof of Proposition~\ref{mainlemma_prop}.
\end{proof}

Now we are ready to prove inequality~(\ref{Lnm_inequality}) for all $z\in D_{n-m}\setminus\bbR$. Let the constants $\tl t, K_1,K_2$ be the same as in Lemma~\ref{induction_lemma}. Define
$$
n_0=\max\{n_1(r,\delta,\Delta,\Theta),n_2(r,\delta,\Delta,\theta,m,\min(K_1,K_2),\tl t)\}+m,
$$
where $n_1$ and $n_2$ are the same as in Lemma~\ref{induction_lemma} and Proposition~\ref{mainlemma_prop} respectively. We also define 
$$
B_3=B(\delta,\Delta,\Theta,\min(K_1,K_2),\tl t),
$$
where $B$ is the same as in Proposition~\ref{mainlemma_prop}. 
We will prove by finite induction that if $n\ge n_0$, then for any $k=0,\dots,m$, the inverse maps $f_{n,k}^{-1}$ are well defined on $D_{n-m}\setminus\bbR$ and for any $z\in D_{n-m}\setminus\bbR$, either~(\ref{Lnm_inequality}) holds, or 
\begin{equation}\label{D_sigma_inclusion_eq}
f_{n,k}^{-1}(z)\in D_\sigma(L_{n,k})\cap F_k,
\end{equation}
where $F_k$ is a $(K_1,K_2)$-bounded flower of the interval $L_{n,k}$ with parameter $\tl t$.

The base of induction, the case $k=m$, is given by first applying part~(i) of Lemma~\ref{induction_lemma} and then Proposition~\ref{mainlemma_prop}. The induction step goes from $k$ to $k-1$ as follows: if the above statement holds for some value of $k=l>0$, then either~(\ref{Lnm_inequality}) holds and the statement holds for all $k=0,\dots,m$, or~(\ref{D_sigma_inclusion_eq}) holds and then, by first applying part~(ii) of Lemma~\ref{induction_lemma} and then Proposition~\ref{mainlemma_prop}, we obtain the above statement for $k=l-1$.

Finally, we observe that according to Proposition~\ref{mainlemma_prop}, if $k=0$, then~(\ref{D_sigma_inclusion_eq}) implies~(\ref{Lnm_inequality}), so we proved~(\ref{Lnm_inequality}) for all $z\in D_{n-m}\setminus\bbR$.

We finish the proof of Lemma~\ref{main_main_lemma} by observing that due to condition~\ref{L_r_def_item2} of Definition~\ref{L_r_def}, and the fact that the lengths of the intervals $L_{n-m}$ converge to zero uniformly in $f$ as $n\to\infty$ (c.f. Lemma~\ref{O_n_Q_n_decrease_lemma} combined with part~\ref{L_n_in_P_n_item1} of Lemma~\ref{L_n_in_P_n_lemma}), we may assume without loss of generality that $n_0$ is large enough, so that if $n\ge n_0$, then the map $f_+^{-1}$ restricted to the disk
$$D\left(f_+(c),\,\, B_3 \frac{|L_{n-m}||f(C_{n+})|}{|C_{n+}|}\right),$$ 
is a root of degree $\alpha$, precomposed and postcomposed with conformal maps of bounded distortion. Hence, together with inequality~(\ref{Lnm_inequality}), this implies that there exists a constant $B_1=B_1(\delta,\Delta,\Theta)>0$, such that 
$$
\frac{|(p\cR^n f_+)^{-1} (z) -c|}{|C_{n+}|} \le B_1 \left(\frac{|L_{n-m}|}{|C_{n+}|}\right)^{1/\alpha}.
$$
This completes the proof of Lemma~\ref{main_main_lemma}.
\end{proof}

\begin{proof}[Proof of Theorem~\ref{main_theorem}]
	It follows from Lemma~\ref{Interval_not_too_short_lemma} that for each pair of real numbers $\delta,\Delta>0$ and a finite set $\Theta\subset\mathbf P$, there exist real numbers $\mu_1>\mu_2>1$, such that for any positive integers $n>m>0$ and $f\in \mathcal S_\Theta^{n+1}$ with real $(\delta,\Delta)$-bounds of level $n$, we have
	\begin{equation}\label{mu1mu2_inequality}
	\mu_2^m<\frac{|L_{n-m}|}{|C_{n\pm}|}<\mu_1^m.
	\end{equation}
	The first part of this inequality implies that for any real number $\rho>0$, there exists a positive integer $m=m(\rho, \delta,\Delta, \Theta)>0$, such that 
	\begin{equation}\label{B_1_rho_inequality}
	B_1\frac{|L_{n-m}|^{1/\alpha}}{|C_{n\pm}|^{(1-\alpha)/\alpha}} <\rho|L_{n-m}|,
	\end{equation}
	where $B_1=B_1(\delta,\Delta,\Theta)$ is the same as in Lemma~\ref{main_main_lemma}.
	
	For a complex number $z_0\in\bbC$ and a positive real number $r>0$, let $D(z_0,r)\subset\bbC$ denote the open disk of radius $r$, centered at $z_0$. 	
	It follows from~(\ref{mu1mu2_inequality}) that the critical point $c$ splits the interval $L_{n-m}$ into two commensurable subintervals, so one can choose $\rho=\rho(\delta,\Delta,\Theta)>0$ so that
	$$
	D(c,\rho|L_{n-m}|)\Subset D_{n-m}
	$$
	and 
	$$
	\mod(D(c,\rho|L_{n-m}|), D_{n-m})>\nu>0,
	$$
	for some fixed constant $\nu>0$.
	
	Fix the domain $\tl D=D_{n-m}$. Then, according to Lemma~\ref{main_main_lemma}, there exists $n_0=n_0(r,m(\rho(\delta,\Delta,\Theta)),\delta,\Delta,\Theta)$, such that if $n\ge n_0$, then the maps $(p\cR^nf_\pm)^{-1}$ are defined on $\tl D\setminus\bbR$. We set
	$$
	\tl U_+=(p\cR^nf_+)^{-1}(\tl D\setminus\bbR)\cup L_{n+}\qquad\text{and}\qquad
	\tl U_-=(p\cR^nf_-)^{-1}(\tl D\setminus\bbR)\cup L_{n-}.
	$$
	Then, it follows from Lemma~\ref{main_main_lemma} and inequality~(\ref{B_1_rho_inequality}) that
	$$
	\mod(\tl U_+\cup\tl U_-,\tl D)>\nu.
	$$
	Furthermore, according to Lemma~\ref{induction_lemma}, the domains $\tilde U_\pm$ are flowers on the intervals $L_{n\pm}$, hence, according to Lemma~\ref{L_n_combinatorial_lemma}, we have
	$$
	\tilde U_\pm\Subset p\cR^nf_\pm(\tilde U_\pm).
	$$
	Combining this with Lemma~\ref{Interval_not_too_short_lemma}, we conclude that 
	$$
	\mod(\tl U_\pm,p\cR^nf_\pm(\tilde U_\pm))>\nu,
	$$
	possibly, after decreasing the constant $\nu(\delta,\Delta,\Theta)$.
	
	Finally, let domains $D$, $U_+$ and $U_-$ be affine rescalings of $\tl D$, $\tl U_+$ and $\tl U_-$ respectively, rescaled by the affine map that takes $C_n$ to $[0,1]$. According to our construction, the renormalization $\cR^nf$ extends to a power-like Lorenz map $\cR^nf\colon U_\pm\to D$ that satisfies condition~\ref{H_nu_item1} of Definition~\ref{H_nu_def}. Condition~\ref{H_nu_item4} of Definition~\ref{H_nu_def} is satisfied due to Lemma~\ref{pRnf_distorted_root_lemma}, possibly, after decreasing the constant $\nu$. According to Definition~\ref{real_bounds_def}, the intervals $C_{n+}$ and $C_{n-}$ are commensurable, hence, after 
	possibly decreasing the constant $\nu$ again, the map $\cR^nf$ is guaranteed to satisfy condition~\ref{H_nu_item3} of Definition~\ref{H_nu_def}. As the last step, we verify that the second part of inequality~(\ref{mu1mu2_inequality}) implies condition~\ref{H_nu_item2} of Definition~\ref{H_nu_def}, after possibly decreasing the constant $\nu$ again. This completes the proof of Theorem~\ref{main_theorem}.
\end{proof}

\subsection{Analyticity of renormalization}
In this subsection we give a proof of Theorem~\ref{main_theorem2}.

For a Jordan domain $\Omega\subset\bbC$, let $\mathcal B(\Omega)$ denote the space of all analytic maps $g\colon\Omega\to\bbC$ that continuously extend to the closure $\overline\Omega$. The set $\mathcal B(\Omega)$ equipped with the sup-norm, is a complex Banach space.
If $\Omega$ is symmetric with respect to the real axis, we let $\mathcal B^\bbR(\Omega)\subset\mathcal B(\Omega)$ denote the real Banach space of all real-symmetric functions from $\mathcal B(\Omega)$.

Given a positive real number $\alpha>1$, the function
$$
p_{\alpha+}\colon\bbC\setminus\bbR^-\to\bbC
$$
is defined as the branch of the map $z\mapsto z^\alpha$ which maps positive reals to positive reals. Similarly we define the function
$$
p_{\alpha-}\colon\bbC\setminus\bbR^+\to\bbC
$$
as the branch of the map $z\mapsto -(-z)^\alpha$ which maps negative reals to negative reals.

For $c\in\bbC\setminus\{0,1\}$, let $\phi_{c+},\phi_{c-}\colon \bbC\to\bbC$ be the affine maps such that $\phi_{c-}([0,c])=[-1,0]$ and $\phi_{c+}([c,1])=[0,1]$, where $[a,b]$ denotes the straight line segment between two complex numbers $a,b\in\bbC$.

For a compact set $K\subset\bbC$ and a positive real number $r>0$, let $N_r(K)$ denote the $r$-neighborhood of $K$ in $\bbC$, namely,
$$
N_r(K)=\{z\in\bbC\mid \min_{w\in K}|z-w|<r\}.
$$

\begin{definition}
	For a positive real number $s>0$, let $\mathbf B_{s-}\subset\mathcal B(N_s([-1,0]))$ be the set of all maps $\psi_-\in \mathcal B(N_s([-1,0]))$ that are univalent in some neighborhood of the interval $[-1,0]$, and such that $\psi_-(-1)=0$ and $0<\Re(\psi_-(0))<1$. Similarly, let $\mathbf B_{s+}\subset\mathcal B(N_s([0,1]))$ be the set of all maps $\psi_+\in \mathcal B(N_s([0,1]))$ that are univalent in some neighborhood of the interval $[0,1]$, and such that $\psi_+(1)=1$ and $0<\Re(\psi_+(0))<1$.
\end{definition}

\begin{proposition}
	For any real number $s>0$, the sets $\mathbf B_{s-}$ and $\mathbf B_{s+}$ are real-symmetric codimension~$1$ affine submanifolds of $\mathcal B(N_s([-1,0]))$ and $\mathcal B(N_s([0,1]))$ respectively.
\end{proposition}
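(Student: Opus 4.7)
The plan is to realize each $\mathbf B_{s\pm}$ as the intersection of a closed real-symmetric complex codimension-one affine subspace of the ambient Banach space with a real-symmetric open subset encoding the non-linear constraints. Since an open subset of such an affine subspace inherits the structure of a real-symmetric codimension-one affine submanifold, this will prove the claim. The relevant antiholomorphic involution is $\psi \mapsto \bar\psi$, where $\bar\psi(z) := \overline{\psi(\bar z)}$.

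First, I would extract the codimension-one affine subspace from the bounded $\bbC$-linear evaluation functional $E_-(\psi) := \psi(-1)$ on $\mathcal B(N_s([-1,0]))$, and likewise $E_+(\psi) := \psi(1)$ on $\mathcal B(N_s([0,1]))$. Each $E_\pm$ is continuous by the sup-norm bound $|E_\pm(\psi)| \le \|\psi\|_\infty$, surjective (apply to constant functions), and satisfies $E_\pm(\bar\psi) = \overline{E_\pm(\psi)}$. Consequently the level sets $E_-^{-1}(0)$ and $E_+^{-1}(1)$ are closed, real-symmetric, complex codimension-one affine subspaces of their respective ambient Banach spaces.

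Next, I would verify that the remaining defining conditions cut out real-symmetric open subsets. The condition $0 < \Re \psi(0) < 1$ is the preimage of an open real interval under the continuous real-symmetric functional $\psi \mapsto \Re \psi(0)$, so is clearly open and real-symmetric. The condition that $\psi$ be univalent on some complex neighborhood of the underlying real interval is also open: given $\psi$ univalent on an open $V$ containing that interval, one chooses a compact $K$ with the interval contained in $\mathrm{int}(K) \subset K \subset V$, on which $|\psi'|$ is bounded below by a positive constant. Combining Cauchy estimates (controlling the first and second derivatives of a nearby $\phi$ on $K$) with a compactness argument that gives a lower bound on $|\psi(z) - \psi(w)|$ for $(z,w) \in K \times K$ with $|z-w|$ bounded below, one shows that any sufficiently small sup-norm perturbation of $\psi$ remains univalent on $\mathrm{int}(K)$. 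Invariance of this condition under $\psi \mapsto \bar\psi$ is immediate.

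The only mildly technical step is the openness of the univalence condition, but this is a standard Hurwitz-type argument and I expect no serious obstacle.
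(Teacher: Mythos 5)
Your proposal is correct and follows essentially the same route as the paper: the affine constraint $\psi(-1)=0$ (respectively $\psi(1)=1$) carves out a closed real-symmetric codimension-one affine subspace, and the remaining conditions cut out a real-symmetric open subset of it. The paper's proof simply asserts openness without elaboration; your Hurwitz-type argument for the openness of the univalence condition spells out a detail the paper leaves implicit.
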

\begin{proof}
	Let $\mathcal B_{s,-1}$ 
	denote the Banach subspace of $\mathcal B(N_s([-1,0]))$ that consists of all $\psi\in\mathcal B(N_s([-1,0]))$, such that $\psi(-1)=0$.
	Then, $\mathbf B_{s-}$ is an open subset of the Banach space $\mathcal B_{s,-1}$. Real symmetry of $\mathbf B_{s-}$ follows from the construction. The proof for $\mathbf B_{s+}$ is similar.
\end{proof}

\begin{definition}
	For a positive real number $s>0$, let $\mathbf A_s$ be the set of all pairs of maps $f=(f_-,f_+)$, such that
	\begin{equation}\label{A_s_representation_eq}
	f_\pm=\psi_\pm\circ p_{\alpha\pm}\circ\phi_{c\pm},
	\end{equation}
	for some $c\in\bbC$ satisfying $0<\Re c<1$, $\phi_{c-}\in \mathbf B_{s-}$ and $\phi_{c+}\in \mathbf B_{s+}$.
\end{definition}

The set $\mathbf A_s$ has a natural structure of a real-symmetric Banach manifold, obtained as a direct product $\{c\in\bbC\mid 0<\Re c<1\}\times\mathbf B_{s-}\times\mathbf B_{s+}$. It is clear from the construction that all elements of the real slice $\mathbf A_s^\bbR$ are analytic Lorenz maps.

Theorem~\ref{main_theorem2} is a direct corollary of the following theorem:

\begin{theorem}\label{main_theorem2a}
	For any pair of real numbers $\delta,\Delta>0$ and a finite set $\Theta\subset\mathfrak P$, there exists a positive real number $s=s(\delta,\Delta,\Theta)>0$, such that Theorem~\ref{main_theorem2} holds for $\mathbf M=\mathbf A_s$.
\end{theorem}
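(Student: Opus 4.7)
The strategy is to leverage Theorem~\ref{main_theorem} in order to choose $N$ large enough that every renormalization $\cR^n f$ with $n \geq N$ of a map $f$ satisfying our hypotheses extends to a power-like Lorenz map in $\mathbf{H}(\nu)$ for some fixed $\nu = \nu(\delta,\Delta,\Theta) > 0$. From the power-law representation~(\ref{f_equals_psi_alpha_phi_equation}), each such extension factors as in~(\ref{A_s_representation_eq}) with $\psi_\pm$ univalent on definite complex neighborhoods of $[-1,0]$ and $[0,1]$ whose size is controlled by $\nu$ via condition~(\ref{L_r_def_item2}) of Definition~\ref{L_r_def}. I would then choose $s = s(\delta,\Delta,\Theta) > 0$ small enough that $N_s([-1,0])$ and $N_s([0,1])$ are contained in those neighborhoods, yielding a canonical embedding of the relevant renormalizations into the real slice $\mathbf{A}_s^\bbR$. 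Let $\mathcal{O} \subset \mathbf{A}_s$ be a real-symmetric open neighborhood of the image of this embedding, taken small enough that for every $g \in \mathcal{O}$ the first $N$ iterates of renormalization remain well-defined and the resulting map still lies in the domain of definition required to write it back in the form~(\ref{A_s_representation_eq}).

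The core of the argument is the analyticity of $\cR^N\colon \mathcal{O} \to \mathbf{A}_s$. An element $g \in \mathbf{A}_s$ yields two analytic maps $g_\pm$ whose dependence on the coordinate $(c,\psi_-,\psi_+)$ is manifestly analytic through~(\ref{A_s_representation_eq}). Since the set $\Theta$ is finite, each prerenormalization $p\cR^j g$ for $j \leq N$ is a finite composition of iterates of $g_\pm$ with a uniformly bounded number of terms, hence is an analytic function of $g$ on $\mathcal{O}$ in the open-compact topology on a suitable target space of analytic maps. The endpoints of the renormalization interval $C_N$ are characterized by iterate equations of the form $g^{m}(x) = x$ (for the non-critical boundary) together with identification of the critical point; by the implicit function theorem, applied using the univalence of the branches of $g$ supplied by membership in $\mathbf{A}_s$, these endpoints depend analytically on $g$, and so does the affine rescaling $A\colon C_N \to [0,1]$. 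Finally, to re-express the rescaled first return map as an element of $\mathbf{A}_s$, one extracts its critical point $\tilde c$ (analytic in $g$) and defines the new factors $\tilde\psi_\pm := (\cR^N g)_\pm \circ (p_{\alpha\pm}\circ\phi_{\tilde c\pm})^{-1}$; the inverse of $p_{\alpha\pm}$ is analytic away from its branch cut, and since $\cR^N g \in \mathbf{H}(\nu')$ for some $\nu' > 0$ (by continuity in $\mathcal{O}$ from the uniform bounds on the image of the embedding), the pullback is defined on $N_s([-1,0])$ and $N_s([0,1])$ and depends analytically on $g$. Real-symmetry is preserved at each step by construction.

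Part~(\ref{Analyt_prop2}) of Theorem~\ref{main_theorem2} is then immediate: any $f \in \mathcal{S}_\Theta^\infty$ with real $(\delta,\Delta)$-bounds lies in some $\mathfrak{L}_r$, so by Theorem~\ref{main_theorem} there exists $M$ such that for every $n \geq M$ the renormalization $\cR^n f$ extends to a map in $\mathbf{H}(\nu)$ and therefore lies in the image of the embedding, hence in $\mathcal{O}$. The main obstacle is the analytic dependence of the renormalization interval $C_N$ on $g$: because the critical point is a discontinuity with vanishing one-sided derivatives, one cannot directly apply the implicit function theorem to equations involving compositions that mix both branches at the critical point. The fix is that the defining equations for the endpoints of $C_N$ always involve compositions of univalent restrictions $g_\pm$ on subintervals not containing $c$ in their interior, after which standard transversality of the fixed-point equations (inherited from the expansion estimates in Lemma~\ref{derivative_bounds_lemma}) makes the argument go through; verifying this uniformly across $\mathcal{O}$ is the main bookkeeping task.
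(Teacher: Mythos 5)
Your overall strategy mirrors the paper's, but there is a genuine gap at the very first step. You say you will ``choose $N$ large enough that every renormalization $\cR^n f$ with $n\geq N$ of a map $f$ satisfying our hypotheses extends to a power-like Lorenz map in $\mathbf H(\nu)$,'' and later you pick $\mathcal O$ ``small enough'' that the first $N$ iterates of $\cR$ remain well-defined on $\mathcal O$. Both of these uniform choices are not free: Theorem~\ref{main_theorem} produces $n_0=n_0(r,\delta,\Delta,\Theta)$ with $r$ the parameter in $\mathfrak L_r$, and for maps $f$ satisfying only the hypotheses of Theorem~\ref{main_theorem2} the value of $r$ is not a priori uniform. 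Property~(\ref{Analyt_prop2}) tolerates an $f$-dependent $M$, but Property~(\ref{Analyt_prop1}) requires a single fixed $N$ and a single open $\mathcal O$ on which $\cR^N$ is everywhere defined, and that is exactly what demands a uniformity argument. The paper's proof supplies this by first working with the set $\mathcal A$ of all maps in $\mathfrak L_\nu$ with real $(\delta,\Delta)$-bounds of level $0$, invoking Lemma~\ref{derivative_bounds_lemma} and the Koebe $1/4$-theorem to place $\mathcal A$ inside $\mathbf A_{s_0}$, and then using Montel's theorem to conclude that $\mathcal A$ is relatively compact in $\mathbf A_{s_0}$. Compactness of $\overline{\mathcal A}$, together with persistence of univalence under uniform limits, is what yields a single $r>0$ such that a full open neighborhood of $\overline{\mathcal A}$ in $\mathbf A_{s_0/2}$ sits inside $\mathfrak L_r$; only then can one fix $N=n_0(r,\ldots)$ and extract, via continuity of $\cR^N$ on bounded combinatorics, an open $\mathcal O\supset\mathcal K$ with $\cR^N(\mathcal O)\subset\mathbf A_{s_0/2}$. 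Without this precompactness step, ``take $\mathcal O$ small enough'' is not grounded: you would at best get, around each embedded renormalization, a neighborhood whose size could degenerate as you range over the family, and there is no guarantee that the union is a neighborhood on which a fixed iterate of $\cR$ acts.

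On the other side of the ledger, your treatment of the analyticity of $\cR^N$ is actually more detailed than the paper's, which simply asserts that Property~(\ref{Analyt_prop1}) ``holds by construction.'' Your observation that the endpoints of the renormalization interval depend analytically on $g$ via an implicit-function argument for fixed-point equations of compositions of univalent branches, and that the critical point never sits in the interior of the intervals on which these compositions act, is the right way to make this rigorous; the transversality input does come from the derivative bounds of Lemma~\ref{derivative_bounds_lemma}. So the two write-ups have complementary strengths: the paper is careful about compactness and terse about analyticity, while yours sketches the analyticity mechanism but omits the Montel argument that underpins the uniform choices. To repair your proof you would insert the precompactness step for the family $\mathcal A\subset\mathfrak L_\nu$ before fixing $N$ and $\mathcal O$.
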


\begin{proof}
Let $\nu=\nu(\delta,\Delta,\Theta)$ be the same as in Theorem~\ref{main_theorem}. Let $\mathcal A\subset\mathfrak{L}_{\nu}$ be the set of all Lorenz maps $f\in\mathfrak L_\nu$ with real $(\delta,\Delta)$-bounds of level $0$. Each such function $f$ can be represented in the form~(\ref{A_s_representation_eq}). It follows from Lemma~\ref{derivative_bounds_lemma} that the derivatives $\psi_\pm'$ are bounded from above and away from zero uniformly in $f\in \mathcal A$. Now Koebe $1/4$-Theorem implies existence of a positive real number $s_0=s_0(\delta,\Delta,\nu)>0$, such that appropriate restrictions of every map $f\in \mathcal A$ belong to $\mathbf A_{s_0}$ and the corresponding maps $\psi_-$ and $\psi_+$ are univalent on their domains $N_{s_0}([-1,0])$ and $N_{s_0}([0,1])$ respectively. According to Montel's Theorem, the family $\mathcal A$ is relatively compact in $\mathbf A_{s_0}$. Let $\overline{\mathcal A}\subset\mathbf A_{s_0}$ denote the closure of $\mathcal A$. We note that uniform limits of bounded univalent functions are either constant or univalent, and real $(\delta,\Delta)$-bounds of level $0$ for maps from $\mathcal A$ exclude the first possibility. Hence, for every map $f\in\overline{\mathcal A}\subset\mathbf A_{s_0}$, the corresponding maps $\psi_-$ and $\psi_+$ are univalent on their domains $N_{s_0}([-1,0])$ and $N_{s_0}([0,1])$ respectively. Together with compactness of $\overline{\mathcal A}$, this implies existence of a real number $r>0$, such that a sufficiently small open neighborhood of $\overline{\mathcal A}$ in $\mathbf A_{s_0/2}$ is contained in $\mathfrak L_r$. 
Fix $N=n_0(r,\delta,\Delta,\Theta)$, where $n_0$ is the same as in Theorem~\ref{main_theorem} and let $\mathcal K\subset\mathcal A\cap\mathcal S_\Theta^{N+1}$ be the set of those Lorenz maps that have real $(\delta,\Delta)$-bounds of level at least $N$. It follows from Theorem~\ref{main_theorem} that
$$
\cR^N(\mathcal K)\subset\mathcal A\subset \mathbf A_{s_0}.
$$
By continuity of renormalization for maps of bounded type and with real bounds, it follows that there exists an open set $\mathcal O\subset\mathbf A_{s_0/2}$, such that $\mathcal K\subset\mathcal O$ and $\cR^N(\mathcal O)\subset\mathbf A_{s_0/2}$. By our construction, property~(\ref{Analyt_prop1}) of Theorem~\ref{main_theorem2} holds for $\mathbf M=\mathbf A_s$, where $s:=s_0/2$. Property~(\ref{Analyt_prop2}) of Theorem~\ref{main_theorem2} follows from Theorem~\ref{main_theorem}.
\end{proof}

\bibliographystyle{amsalpha}
\bibliography{biblio}
\end{document}